\DeclarePairedDelimiter\ceil{\lceil}{\rceil}
\long\def\symbolfootnote[#1]#2{\begingroup%
\def\thefootnote{\fnsymbol{footnote}}\footnote[#1]{#2}\endgroup}
\newcommand{\tr}{\ensuremath{{}^t\!}}
\newcommand{\F}{\mathbb{F}_q}
\newcommand{\Aut}{\textup{Aut}}
\newcommand{\bigzero}{\mbox{\normalfont\Large\bfseries 0}}
\def\imod#1{\allowbreak\mkern10mu({\operator@font mod}\,\,#1)}
\newtheorem{theorem}{Theorem}[section]
\newtheorem{lemma}[theorem]{Lemma}
\newtheorem{corollary}[theorem]{Corollary}
\newtheorem{proposition}[theorem]{Proposition}
\newtheorem*{theorem*}{Theorem}
\theoremstyle{definition}
\newtheorem{definition}[theorem]{Definition}
\newtheorem{example}[theorem]{Example}
\numberwithin{equation}{section}
\newcommand{\ignore}[1]{}
\newcommand{\mynote}[1]{}
\begin{document}
\setcounter{section}{0}
% document information
\title{Nilpotent Lie Algebras of breadth type $(0,3)$}
\author{Rijubrata Kundu}
\address{Indian Institute of Science Education and Research Mohali, Knowledge City, Sector 81, Mohali 140 306, India}
\email{rijubrata8@gmail.com}
\author{Tushar Kanta Naik}
\address{Indian Institute of Science Education and Research Mohali, Knowledge City, Sector 81, Mohali 140 306, India}
\email{mathematics67@gmail.com}
\author{Anupam Singh}
\address{Indian Institute of Science Education and Research Pune, Dr. Homi Bhabha Road, Pashan, Pune 411 008, India}
\email{anupamk18@gmail.com}
\thanks{Rijubrata Kundu thanks IISER Mohali for the Institute Post-Doctoral Fellowship. Tushar 
Kanta Naik would like to acknowledge support from NBHM via grant 0204/3/2020/R\&D-II/2475. Anupam 
Singh is funded by SERB through CRG/2019/000271 for this research.}
\subjclass[2010]{17B05, 17B30}
\today
\keywords{Nilpotent Lie algebras, Breadth type, Camina Lie algebras}
%%%%%%%%%%%%%%%%%%%%%%%%

\begin{abstract}
For a natural number $m$, a Lie algebra $L$ over a field $k$ is said to be of breadth type $(0, 
m)$ if the co-dimension of the centralizer of every non-central element is of dimension $m$. In 
this article, we classify finite dimensional nilpotent Lie algebras of breadth type $(0, 3)$ over 
$\mathbb F_q$ of odd characteristics up to isomorphism. We also give a partial classification of 
the same over finite fields of even characteristic, $\mathbb C$ and $\mathbb R$. We also discuss 
$2$-step nilpotent Camina Lie algebras.
\end{abstract}

\maketitle
%%%%%%%%%%%%%%%%%%%%%%%%

\section{Introduction}

The classification of finite dimensional nilpotent Lie algebras is an important topic in the subject 
of Lie theory. Over a finite field of characteristics $p$ it shares a lot of similarities with that 
of $p$-groups. Nilpotent Lie algebras of small dimensions are classified (see for 
example~\cite{dg, jnp, se}). To study the same in higher 
dimensions we use breadth and breadth type for Lie algebras which are analogous to the 
notion of breadth and conjugate type for finite $p$-groups (see for example~\cite{ch, is1, 
nky}) which have been useful there. Let $L$ be a finite dimensional Lie algebra over a field $k$. 
The breadth of an element $x\in L$, denoted by $b(x)$, is the co-dimension of $C_L(x)$, the 
centralizer of $x$ in $L$. The {\bf breadth of a Lie algebra} $L$, denoted by $b(L)$, is the maximum 
of the breadths of all its elements. More generally, we can define breadth type for a Lie algebra. A 
finite dimensional Lie algebra $L$ is said to be of {\bf breadth type} $(0=m_0, m_1, m_2,\cdots, 
m_r)$ where $m_i$ are the distinct breadth of elements of $L$ written in increasing order (that is, 
$m_i < m_{i+1}$). In this paper we ask the following questions: 
\begin{enumerate}
\item[(a)] Given positive integers $\{0=m_0, m_1, m_2,\cdots, m_r\}$, does there exist a nilpotent 
Lie algebra of breadth type $(0=m_0, m_1, m_2,\cdots, m_r)$? 
\item[(b)] Classify all finite dimensional nilpotent Lie algebras of breadth type $(0, m)$ for a 
positive integer $m$.    
\end{enumerate}
Barnea and Isaacs (see~\cite{bi}) studied Lie algebras with few centralizer dimensions. They proved 
that Lie algebras of breadth type $(0, m)$ need not be nilpotent, not even solvable. However, any 
nilpotent Lie algebra of breadth type $(0, m)$ has nilpotency class at most $3$. Among 
several interesting results, they classified non-nilpotent Lie algebras of type $(0, m)$ over 
$\mathbb C$.  In~\cite{kms, swk, re}, the nilpotent Lie algebras of breadth $1, 2$, 
and $3$ are studied. The nilpotent Lie algebras of breadth type $(0,1)$ (see 
Section~\ref{01type}) and $(0,2)$ (see Section~\ref{02type}) are easy to get from these results. We 
remark that the results in this subject do depend on the base field as we will see in this article 
too.  

In this paper, we classify nilpotent Lie algebras of breadth type $(0, 3)$ over certain fields. Let 
$\mathcal{L}_m$ be the free $2$-step nilpotent Lie algebra obtained as the quotient of the free Lie 
algebra $FL_{m+1}$ on $m+1$ generators by $[FL_{m+1},\; [FL_{m+1},FL_{m+1}]]$ (see 
Section~\ref{correspondence} for more details). The main theorem of this paper is as follows:
\begin{theorem}\label{breadth_type_(0,3)_classification_odd_char_finite_fields}
Let $L$ be a finite dimensional nilpotent Lie algebra over finite field $\mathbb F_q$ of odd 
characteristic. Then, $L$ is of breadth type $(0,3)$ if and only if $L \cong \mathfrak{g}+ I$, where 
$I$ is some Abelian Lie algebra, and $\mathfrak{g}$ is one of the following:
\begin{itemize}
\item[(i)] a $2$-step Camina Lie algebra with derived subalgebra of dimension $3$,
\item[(ii)] the Lie algebra $\mathcal{L}_3$ generated by four elements $x_1,x_2,x_3,x_4$.
\item[(iii)] the quotient $\mathcal{L}_3 / I$, where $I$ is the central ideal of dimension $1$ 
given by $I = \langle [x_1,x_2] + [x_3,x_4]\rangle$,
\item[(iv)] the quotient $\mathcal{L}_3 / J$, where $J$ is the central ideal of dimension $2$ 
given by $J = \langle [x_1,x_2] + [x_3,x_4],\; [x_1,x_3] + t[x_2, x_4]\rangle$, where $t$ is a 
non-square in $\F$.
\end{itemize}
\end{theorem}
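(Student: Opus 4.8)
The plan is to translate the breadth hypothesis into a linear-algebra condition on the commutator map and then invoke quadratic-form theory over $\F$. First I would remove the abelian part: for a nilpotent $L$ one splits off a vector-space complement $I$ to $[L,L]$ inside the center, writing $L \cong \mathfrak g \oplus I$ with $I$ abelian and central and $Z(\mathfrak g) \subseteq [\mathfrak g,\mathfrak g]$; since $[\,g+i,L]=[g,\mathfrak g]$, the summand $I$ does not affect breadth, so $L$ has breadth type $(0,3)$ iff $\mathfrak g$ does. Next I would show $\mathfrak g$ is $2$-step nilpotent: by Barnea--Isaacs the class is at most $3$, and I would rule out class $3$ by a direct analysis along the lower central series, observing that non-central elements of $[\mathfrak g,\mathfrak g]$ have $\operatorname{ad}$-image inside $[\mathfrak g,[\mathfrak g,\mathfrak g]]$ and comparing their breadths with those of the generators forces a value outside $\{0,3\}$. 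Having reduced to the $2$-step case with $Z(\mathfrak g)=[\mathfrak g,\mathfrak g]=:W$, set $V=\mathfrak g/W$, $d=\dim V$, $c=\dim W$; the bracket induces a surjection $\beta\colon \wedge^2 V \twoheadrightarrow W$ with kernel $K$, and for $0\neq \bar x\in V$ one has $[x,\mathfrak g]=\beta(\bar x\wedge V)$, so $b(x)=(d-1)-\dim\big((\bar x\wedge V)\cap K\big)$.

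The dictionary to exploit is that $\wedge^2 V$ is the space of alternating forms, its decomposable vectors $\bar x\wedge \bar y$ are exactly the rank-$\le 2$ forms, cut out by the Pfaffian quadric $q$. Every element of $\beta(\bar x\wedge V)$ is the image of a decomposable divisible by $\bar x$; hence for a central ideal $K\subseteq W$ the breadth stays $3$ after passing to $\mathcal L_3/K$ exactly when $K$ meets no $\bar x\wedge V$ nontrivially, i.e. when $K$ contains no nonzero decomposable, i.e. when $K$ is \emph{anisotropic} for $q$. This yields the ``if'' direction: in $\mathcal L_3$ (the $d=4$, $K=0$ case) the relation $\sum c_j(\bar x\wedge x_j)=0$ forces $(c_j)\parallel(a_j)$, so $\bar x\wedge V$ is $3$-dimensional and $\mathcal L_3$ has breadth type $(0,3)$; the ideals in (iii) and (iv) are spanned by $q$-anisotropic vectors and planes, since $q(\eta)=c_{12}c_{34}-c_{13}c_{24}+c_{14}c_{23}$ evaluates to $\lambda^2-t\mu^2\neq 0$ on the plane of (iv) precisely when $t$ is a non-square, so these quotients again have every non-central breadth equal to $3$.

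For the converse in the $2$-step case I would split on whether $\mathfrak g$ is Camina. If it is, then $[x,\mathfrak g]=W$ for every non-central $x$, so $c=b(x)=3$ and $\mathfrak g$ is a $2$-step Camina Lie algebra with $\dim[\mathfrak g,\mathfrak g]=3$, which is case (i). If it is not, there is an $\bar x_0$ with $\beta(\bar x_0\wedge V)\subsetneq W$, forcing $c\geq 4$; I then want $d=4$, after which $K$ is anisotropic (by the breadth-$3$ equivalence above) with $\dim K=\binom{4}{2}-c=6-c$. Since over $\F$ any nondegenerate form in $\geq 3$ variables is isotropic (Chevalley--Warning), an anisotropic subspace of $(\wedge^2 V,q)$ has dimension at most $2$, so $\dim K\in\{0,1,2\}$ and $c\in\{6,5,4\}$. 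Finally, because $GL(V)=GL_4(\F)$ acts on $\wedge^2 V\cong\F^6$ as the full group of similitudes of $q$ (the map $SL_4\to SO_6$ being the spin cover of the split form), classifying $K$ up to isomorphism of $\mathfrak g$ is classifying anisotropic subspaces up to similitude: Witt's theorem gives a single orbit of anisotropic lines (the nondegenerate forms) and a single orbit of anisotropic planes (restriction similar to the norm form $x^2-ty^2$), which are exactly (iii) and (iv), while $K=0$ is (ii).

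The main obstacle I anticipate is the step ``non-Camina $\Rightarrow d=4$.'' Phrased invariantly, $B(\bar x,\bar y)=\beta(\bar x\wedge \bar y)$ is a $W$-valued alternating form on $V$ whose every nonzero direction $\bar x$ gives a linear map $B(\bar x,-)\colon V\to W$ of constant rank $3$, and I must show that such a form with $c\geq 4$ can exist only when $d=4$. I would fix $\bar x_0$, study the radical $R(\bar x_0)=\ker B(\bar x_0,-)$ of dimension $d-3$ (which contains $\bar x_0$), and track how the rank-$3$ condition propagates to $\bar x_0+\bar y$ for $\bar y\in R(\bar x_0)$; the expectation is that if $d\geq 5$ the subspaces $\beta(\bar x\wedge V)$ are forced to be coherent enough to fill all of $W$, contradicting non-Camina. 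Establishing this constant-rank bound rigorously---rather than the routine quadratic-form bookkeeping in the other steps---is where the real work lies.
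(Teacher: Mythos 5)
Your outline reproduces the paper's skeleton (pass to a stem algebra plus an abelian summand, exclude nilpotency class $3$, then split the $2$-step case into Camina with $\dim L'=3$ versus $4$-generated quotients of $\mathcal{L}_3$), but the two reductions you leave open are genuine gaps, and they are exactly the points where the paper invokes an external structural theorem rather than arguing from scratch. The paper cites the characterization of breadth-$3$ nilpotent Lie algebras over $\F$ of odd characteristic due to Sriwongsa--Wiboonton--Khuhirun (\cite{swk}, Theorem 3.1; reproduced as Theorem~\ref{breadth_3_characteization}), and Lemma~\ref{main_lemma_3} sharpens it under the $(0,3)$ hypothesis: either $\dim L'=3$ and $\dim L/Z(L)\geq 4$, or $\dim L/Z(L)=4$ and $\dim L'\geq 4$ (case (c) of the cited theorem is eliminated because $[x,L]\subseteq I$ with $\dim I=1$ forces $b(x)\leq 1$, hence $x\in Z(L)$, hence $Z(L/I)=Z(L)/I$, contradicting Lemma~\ref{lemma2}). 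This is precisely your missing ``non-Camina $\Rightarrow d=4$'': for $d\geq 5$ your constant-rank condition asks for a subspace $K$ meeting every $(d-1)$-dimensional space $\bar x\wedge V$ in dimension exactly $d-4$, and ruling that out is the content of the cited theorem, not routine bookkeeping; you correctly flag it as the real work, but you do not supply it. The same citation drives the exclusion of class $3$, where your one-sentence sketch does not suffice: the cheap observations only give $\dim\gamma_3(L)\geq 3$ and $\dim L'\geq 4$ (every noncentral $x\in L'$ has $[x,L]\subseteq\gamma_3(L)$ and breadth $3$), which contradicts nothing yet. The paper then uses Lemma~\ref{main_lemma_3} to get $\dim L/Z(L)=4$, deduces $\dim L/L'=3$ and $\dim(L'/Z(L))=1$, so $L=\langle x,y,z\rangle$ with $L'=\mathrm{span}\{Z(L),[x,y]\}$, replaces $z$ by $z+t_2x-t_1y$ so that $[x,z],[y,z]\in Z(L)$, and applies Lemma~\ref{lemma1} to get $[[x,y],z]=0$; then $\langle L',z\rangle\subseteq C_L([x,y])$ has codimension at most $2$, contradicting $b([x,y])=3$. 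Nothing in your sketch produces this (or any) contradiction.

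The part of your proposal that is sound---classifying central ideals $K\subsetneq Z(\mathcal{L}_3)$ with $\dim K\leq 2$ containing no nonzero Lie bracket---is a genuinely different and more conceptual route than the paper's: where Lemma~\ref{main_lemma_1} and Lemma~\ref{main_lemma_2} normalize $K$ by explicit chains of automorphisms and reduce to solvability of $t^2\alpha-t\beta-1=0$ (i.e.\ to whether $\beta^2+4\alpha$ is a square), you identify ``no nonzero bracket in $K$'' with anisotropy of $K$ for the Pfaffian quadric on $\wedge^2V$, get $\dim K\leq 2$ from Chevalley--Warning, and get uniqueness of each orbit from Witt's theorem; your computation that the plane in case (iv) has restricted form $\lambda^2-t\mu^2$ is correct. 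Two points need repair if you pursue this. First, $\mathrm{GL}_4(\F)$ does \emph{not} act on $\wedge^2V$ as the full similitude group of the Klein quadric; its image is the proper similitude group, of index $2$, so you must check that proper-similitude orbits and full-similitude orbits of anisotropic lines and planes agree (they do, because the stabilizer of such a line, resp.\ plane, contains an improper isometry: the reflection in the line's orthogonal hyperplane, resp.\ the Galois involution of the plane extended by the identity). Second, the equivalence between breadth type $(0,3)$ of $\mathcal{L}_3/K$ and ``$K$ contains no nonzero decomposable'' should be stated and proved; this is the paper's Proposition~\ref{quotient_breadth_type_(0,m)}. With the characterization from \cite{swk} cited for the two reductions, your quadratic-form argument would give a complete, and arguably cleaner, proof of the $4$-generated half of the theorem.
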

\noindent We prove this theorem in Section~\ref{proof_of_the_main_theorem}. 

In Section~\ref{correspondence} we explore a correspondence between $p$-groups of nilpotency class 
$2$ of exponent $p$ and finite dimensional $2$-step nilpotent Lie algebras over $\mathbb F_p$ where 
$p$ is an odd prime. Using this we give positive answer to our first question over the field 
$\mathbb F_p$, when $p$ is odd. Further, using the known results from ~\cite{ny} about 
$p$-groups of conjugate type $(1,p^3)$ we can get our 
Theorem~\ref{breadth_type_(0,3)_classification_odd_char_finite_fields} over $\mathbb F_p$. The 
proof over a more general field requires more effort which we do in the later sections. In 
Section~\ref{camina_Lie_algebra} we include a study of $2$-step nilpotent Camina Lie algebras as it 
is required in our work. Although the classification of these Lie algebras is difficult to get, we 
manage to deduce results over some fields. With our analysis we also get classification of 
$4$-generated $2$-step nilpotent Lie algebras of breadth type $(0, 3)$ over finite fields of 
characteristic $2$ (see 
Theorem~\ref{four_generated_nilpotent_Lie_algebra_(0,3)_even_characteristic}), over $\mathbb C$ (see 
Theorem~\ref{four_generated_nilpotent_Lie_algebra_(0,3)_complex}), and over $\mathbb R$ (see 
Theorem~\ref{four_generated_nilpotent_Lie_algebra_(0,3)_real}). As mentioned earlier, this also 
highlights that the classification depends on the field.

\subsection{Notations}
We set some notations that will be used throughout the paper. A Lie algebra is assumed to be finite 
dimensional over a field $k$. For a Lie algebra $L$, the center of $L$ is denoted by $Z(L)$ and the 
derived subalgebra $[L, L]$ by $L'$. A $c$-step nilpotent Lie algebra is a nilpotent Lie algebra of 
nilpotency class $c$. Finally, when we say that a Lie algebra $L$ is generated by $n$ elements, we 
mean that $L$ is minimally generated by $n$ elements.

\subsection*{Acknowledgement} We thank Pradeep Kumar Rai for his interest in this problem. 

%%%%%%%%%%%%%%%%%%%%%%%%%%%%%%%%%%
\section{Breadth type and Nilpotent Lie algebras}\label{Breadth_type_(0,1)_(0,2)_Lie_algebras}

In this section, we recall some basic definitions and collect some easy to get results which sets 
tone for the rest of the paper. Let $L$ be a finite dimensional Lie algebra over a field $k$. The 
breadth $b(x)$ of an element $x\in L$, which is co-dimension of the centralizer of $x$ in $L$, can 
also be thought of as the rank of the adjoint map $ad_x\colon L \rightarrow L$ given by $y \mapsto 
[x,y]$. The breadth $b(L)$ of a Lie algebra $L$ is the maximum of the breadths of all its elements. 
The concept of breadth in a Lie algebra is analogous to the notion of breadth in $p$-groups which 
has been useful in understanding the classification of these groups. 
More generally, for a Lie algebra $L$ the breadth type captures all possible co-dimensions of 
various distinct centralizers, i.e., all possible breadths of elements of $L$. This notion is 
similar to the conjugate type in groups (see~\cite{it}). Considerable amount of literature is 
available for groups, in particular for $p$-groups, of small breadth and conjugate types (see for 
instance, \cite{it1},  \cite{it2}, \cite{is}, \cite{is1}, \cite{ps}, \cite{wi}, \cite{ny}, 
\cite{nky}, \cite{dj}, \cite{do}, \cite{cc}). 

We begin with some basic examples.
\begin{example}
Consider the Lie algebra $L=\mathfrak{sl}_2(k)$ where characteristics of $k$ is $\neq 2$. Then, 
$b(L)=2$ and 
breadth type is $(0, 2)$.  
\end{example}
\begin{example}\label{classical-heisenberg}
Let $\mathcal H= \left\{ \begin{bmatrix} 0 &a &c\\ 0&0&b \\ 0&0&0 \end{bmatrix} \mid a, b, c\in k 
\right\}$ be the classical Heisenberg Lie algebra  over a field $k$. The breadth type of $\mathcal 
H$ is 
$(0,1)$.  
\end{example}
\begin{example}\label{non-nilpotentCamina}
Let $k$ be a field, and $L=\langle x, y \rangle$ with $[x,y]=x$. Then, $L$ is an example of Camina Lie algebra (see Section~\ref{camina_Lie_algebra} for definition) but not nilpotent. Further, $L$ is of breadth type $(0,1)$. 
\end{example}

Now we see that breadth and breadth type for Lie algebras remain same up to isoclinism. We recall 
this notion here for the sake of completeness. 

%%%%%%%%%%%%%%%%%%%%%%%%%%%%%%%%%%%%%%%%%%
\subsection{Isoclinism of Lie algebras}\label{isoclinism}
In 1940, Hall (see~\cite{ha}) introduced the concept of isoclinism of groups which is weaker 
than isomorphism, and plays a very important role in the classification of finite $p$-groups. 
In 1994, Moneyhun in~\cite{mo} introduced the notion of isoclinism of Lie algebras in a similar 
way. Let $L$ be a Lie algebra over a field $k$. We denote the central quotient as $\overline{L} = 
L/Z(L)$. Then, the adjoint map induces a map $a_{L} \colon \overline{L} \times \overline{L} 
\rightarrow  L'$ given by 
$$\left(x+Z(L),y+Z(L)\right) \mapsto [x,y].$$ 
Two Lie algebras $L$ and $K$ are said to be {\bf isoclinic} if there exists an  isomorphism $\phi$ 
of the quotient Lie algebra $\overline L = L/Z(L)$ onto $\overline{K} = K/Z(K)$, and an isomorphism 
$\theta$ of the derived subalgebras $L'$ onto  $K'$ such that the following diagram commutes:
\[
\begin{CD}
\overline L \times \overline L    @> {\phi\times\phi} >> \overline K \times \overline K \\
@V{a_L}VV        @VV{a_K} V\\
 L'  @>{\theta} >> K'.
\end{CD}
\]

It is easy to check that isoclinism is an equivalence relation among Lie algebras. Equivalence 
classes under this relation are called {\bf isoclinism families}. A Lie algebra $L$ is said to be 
{\bf stem (or pure)} if $Z(L) \subseteq L'$. It turns out that, each isoclinism 
family of finite dimensional Lie algebra contains a stem Lie algebra which is of minimal 
dimension. Further, if $L$ and $K$ are two isoclinic Lie algebras of the same dimension, then $L$ and $K$ are isomorphic. This is slightly different from the group situation as in the case of groups, 
two non-isomorphic groups of the same order can be isoclinic. For example, the extra-special 
$p$-groups of exponent $p$ and $p^2$ (of the same order) are isoclinic but they are not isomorphic. 

For a Lie algebra $L$ it turns out that $L$ is isoclinic to $L\oplus A$ for any Abelian 
Lie algebra $A$. Further, if a finite dimensional Lie algebras $L$ of dimension $m$ is isoclinic to 
a stem Lie algebra $K$ of dimension $n$ then $L$ is isomorphic to $K\oplus A$ for an Abelian Lie 
algebra $A$ of dimension $m-n$. We have,
\begin{proposition}
Let $L$ and $K$ be two isoclinic finite dimensional Lie algebras. Then $L$ and $K$ are of the same 
breadth type.
\end{proposition}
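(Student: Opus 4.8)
The plan is to show that the breadth of an element can be read off entirely from the isoclinism data $(\overline{L}, L', a_L)$, and then to transport ranks across the commuting square that defines isoclinism. First I would observe that $b(x)$ depends only on the class $\bar x = x + Z(L) \in \overline{L}$. Indeed, the image of $\mathrm{ad}_x$ lies in $L'$, and its kernel $C_L(x)$ always contains $Z(L)$, so $\mathrm{ad}_x$ factors through $\overline{L}$ and induces a linear map $\overline{L} \to L'$ having the same image as $\mathrm{ad}_x$. This induced map is precisely $a_L(\bar x, -)\colon \bar y \mapsto [x,y]$. Hence $b(x) = \mathrm{rank}\, a_L(\bar x, -)$, and in particular $b(x) = b(x')$ whenever $x - x' \in Z(L)$.

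Next I would transport this rank across the isoclinism. Fix $x \in L$ and choose $k \in K$ with $\phi(\bar x) = \bar k$. Commutativity of the defining diagram gives, for every $\bar y \in \overline{L}$,
$$a_K(\bar k, \phi(\bar y)) = \theta\bigl(a_L(\bar x, \bar y)\bigr).$$
Since $\phi$ is a bijection of $\overline{L}$ onto $\overline{K}$, as $\bar y$ runs over $\overline{L}$ the element $\phi(\bar y)$ runs over all of $\overline{K}$; therefore the image of the map $a_K(\bar k, -)\colon \overline{K} \to K'$ equals $\theta$ applied to the image of $a_L(\bar x, -)$. Because $\theta\colon L' \to K'$ is an isomorphism it preserves dimensions, whence
$$b(k) = \mathrm{rank}\, a_K(\bar k, -) = \dim \theta\bigl(\mathrm{im}\, a_L(\bar x, -)\bigr) = \mathrm{rank}\, a_L(\bar x, -) = b(x).$$

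Finally, since $\phi$ is onto, every class in $\overline{K}$ has the form $\phi(\bar x)$ for some $x \in L$, so the computation above shows that the set of breadths realized by elements of $L$ coincides with the set realized by elements of $K$. Writing these distinct values in increasing order yields identical tuples, and therefore $L$ and $K$ have the same breadth type. I do not anticipate a genuine obstacle here; the only point that demands care is the identification of $\mathrm{rank}\,\mathrm{ad}_x$ with $\mathrm{rank}\, a_L(\bar x, -)$, which rests on the inclusions $Z(L) \subseteq C_L(x)$ and $\mathrm{im}\,\mathrm{ad}_x \subseteq L'$.
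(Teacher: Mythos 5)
Your proof is correct and complete. The paper states this proposition without any proof, treating it as an easy consequence of the definitions; your argument --- identifying $b(x)$ with the rank of the induced linear map $a_L(\bar x,-)\colon \overline{L}\to L'$ and then transporting that rank through the commuting square via the isomorphisms $\phi$ and $\theta$ --- is exactly the argument the paper leaves implicit, so there is nothing to compare beyond noting that you have supplied the missing details correctly.
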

\noindent In the light of above, for the classification of Lie algebras of a given breadth 
type, it is enough to consider stem Lie algebras of that breadth type.

%%%%%%%%%%%%%%%%%%%%%%%%
\subsection{Classification of $(0,1)$ type}\label{01type}

As an example, we discuss the classification of breadth type $(0,1)$ nilpotent Lie algebras. 
Let us begin with an example which is a generalisation of $3$-dimensional Heisenberg Lie algebra in 
Example~\ref{classical-heisenberg}.  

\begin{example}[Heisenberg Lie algebra of dimension $2m+1$]\label{usual-Heisenberg-Lie-algebra}
For $m\geq 1$, consider the Lie algebra $\mathcal{H}_m = \left\{\begin{bmatrix} 0 & \textbf{a} & c\\
0 & 0_{m} & \textbf{b}\\ 0 & 0 & 0 \end{bmatrix}\right\}$ consisting of matrices over a field 
$k$ where $a$ is a $1\times m$ row vector, $b$ is a $m\times 1$ column vector, and $0_{m}$ is the 
zero matrix of dimension $m\times m$. Clearly, $\mathcal H_1=\mathcal H$ and the center 
$Z(\mathcal{H}_m)$ is the derived subalgebra $\mathcal{H}_m'$ of dimension $1$. It is 
easy to check that $\mathcal{H}_m$ is a Camina Lie algebra (more on this in 
Section~\ref{camina_Lie_algebra}) of nilpotency class $2$ and breadth $1$. In general, we will 
call this $2m+1$-dimensional Heisenberg Lie algebra simply as {\bf Heisenberg Lie algebras}. A 
presentation of Heisenberg Lie algebra is as follows:  
$\mathcal{H}_m=\text{span}\{x_1, y_1, x_2, y_2, \ldots, x_m, y_m, z\}$ where 
$[x_i, y_j] = \delta_{ij} z$, $[z,L]=0$ (here $\delta_{ij}$ denotes the Kronecker delta). This 
presentation could be realized as follows:
$$x_i= \begin{bmatrix} 0 & e_i^t & 0\\ 0 & 0_{m} & 0\\ 0 & 0 & 0 \end{bmatrix}, \;\; 
y_j=\begin{bmatrix} 0 & 0 & 0\\ 0 & 0_{m} & e_j\\ 0 & 0 & 0
\end{bmatrix} , \; \; 
z=\begin{bmatrix} 0 & 0 & 1\\ 0 & 0 & 0\\ 0 & 0 & 0 \end{bmatrix}
$$
where $e_i$ is the usual $m\times 1$ column vector with $1$ in $i^{th}$ place and $0$ elsewhere. 
The breadth type of $\mathcal{H}_m$ is $(0,1)$
\end{example}

Nilpotent lie algebras of breadth type $(0,1)$ and breadth $1$ are same, thus we simply quote the 
classification of latter from~\cite{kms}, Theorem 2.4.
\begin{theorem}\label{breadth_type_(0,1)_classification}
Let $L$ be a finite dimensional nilpotent stem Lie algebra of breadth type $(0,1)$. Then, $L$ is 
isomorphic to a Heisenberg Lie algebra $\mathcal{H}_m$.
\end{theorem}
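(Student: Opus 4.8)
The plan is to show that a finite dimensional nilpotent stem Lie algebra $L$ of breadth type $(0,1)$ must be a Heisenberg Lie algebra $\mathcal{H}_m$, by first pinning down the structure of $L'$ and $Z(L)$ and then reconstructing the Heisenberg presentation. Since the breadth type is $(0,1)$, every non-central element $x$ has $b(x)=1$, meaning $\operatorname{rank}(ad_x)=1$, so $[x,L]$ is a one-dimensional subspace of $L'$ for every non-central $x$. The key first step is to argue that $L' = Z(L)$ is one-dimensional. First I would note that since the breadth is $1$, the nilpotency class is $2$, so $[L,[L,L]]=0$ and hence $L' \subseteq Z(L)$; being stem, $Z(L)\subseteq L'$, giving $L' = Z(L)$. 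To see $\dim L' = 1$, I would fix a non-central $x$ and a second non-central $y$ with $[x,y]\neq 0$, and use that every $[x,z]$ lies in the line $[x,L]$, then leverage skew-symmetry and the bilinearity of the commutator to show all commutators are forced into a single line.

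Granting $\dim L' = \dim Z(L) = 1$, write $L' = Z(L) = \langle z\rangle$. The next step is to analyze the induced alternating bilinear form. The commutator descends to a nondegenerate alternating form on $\overline{L} = L/Z(L)$ with values in the line $\langle z\rangle \cong k$, namely $\beta(\bar x,\bar y)$ defined by $[x,y] = \beta(\bar x,\bar y)\, z$. Nondegeneracy follows because the radical of $\beta$ is exactly $Z(L)/Z(L) = 0$: any $\bar x$ in the radical would give $[x,L]=0$, forcing $x \in Z(L)$. Thus $\overline{L}$ carries a nondegenerate alternating form over $k$, which forces $\dim \overline{L} = 2m$ to be even.

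The final step is to invoke the standard symplectic basis theorem: any nondegenerate alternating form on a $2m$-dimensional space over $k$ admits a basis $\bar x_1, \bar y_1, \ldots, \bar x_m, \bar y_m$ with $\beta(\bar x_i, \bar y_j) = \delta_{ij}$ and $\beta(\bar x_i, \bar x_j) = \beta(\bar y_i, \bar y_j) = 0$. Lifting these to elements $x_i, y_i \in L$ and keeping the generator $z$ of $Z(L)$, the relations $[x_i,y_j]=\delta_{ij}z$ and $[z,L]=0$ are exactly the defining presentation of $\mathcal{H}_m$ from Example~\ref{usual-Heisenberg-Lie-algebra}, so $L \cong \mathcal{H}_m$. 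The main obstacle is the first step, establishing $\dim L' = 1$ from the hypothesis that \emph{every} non-central element has breadth exactly $1$; one must rule out the possibility that different elements have their one-dimensional commutator images spread across a higher-dimensional $L'$. I would close this by observing that if $[x,L]$ and $[y,L]$ were distinct lines for two non-central $x,y$, then a generic linear combination $ax+by$ would have $[ax+by, L]$ meeting both lines and hence be two-dimensional, contradicting breadth $1$; this pins $L'$ down to a single line and makes the symplectic reconstruction go through.
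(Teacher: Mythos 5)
The paper itself gives no argument for this theorem: it observes that breadth type $(0,1)$ and breadth $1$ coincide for nilpotent Lie algebras and simply quotes the classification from \cite{kms}, Theorem 2.4. So your attempt at a self-contained proof is necessarily a different route --- but it has a genuine gap at exactly the step you yourself flag as the main obstacle, namely $\dim L' = 1$. The claim that if $[x,L] = \langle u\rangle$ and $[y,L] = \langle v\rangle$ are distinct lines then a generic combination $ax+by$ has two-dimensional commutator image is false. Write $[x,w] = f(w)\,u$ and $[y,w] = g(w)\,v$ with $f,g$ nonzero linear functionals on $L$; then $[ax+by,\,w] = a f(w)\,u + b g(w)\,v$, so $[ax+by,L]$ is two-dimensional if and only if $f$ and $g$ are linearly independent functionals, which the hypothesis $\langle u\rangle \neq \langle v\rangle$ does not force. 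Concretely, let $L = \text{span}\{x,y,z,u,v\}$ with $[x,z]=u$, $[y,z]=v$, $u,v$ central, $[x,y]=0$. Then $[x,L]=\langle u\rangle$ and $[y,L]=\langle v\rangle$ are distinct lines, yet $[ax+by,L] = \langle au+bv\rangle$ is one-dimensional for every $(a,b)\neq(0,0)$: the image need not ``meet both lines'' at all. (This $L$ is indeed not of type $(0,1)$, but the witness is $z$, which has breadth $2$, not any combination of $x$ and $y$.) A second, smaller gap: you assert without justification that breadth $1$ forces nilpotency class $2$; this is true but is itself a nontrivial part of what is being proved.

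Both gaps close simultaneously if you argue with a third element instead of with combinations of $x$ and $y$. Suppose $x,y$ are non-central with $[x,L]=\langle u\rangle$, $[y,L]=\langle v\rangle$ and $u,v$ linearly independent. Since a vector space is never the union of two proper subspaces, there exists $z \notin C_L(x)\cup C_L(y)$; then $[z,L]$ contains $[z,x]\in\langle u\rangle\setminus\{0\}$ and $[z,y]\in\langle v\rangle\setminus\{0\}$, so $b(z)\geq 2$, contradicting breadth type $(0,1)$. Hence all the lines $[x,L]$, for $x$ non-central, coincide, and since $L'$ is spanned by such brackets, $\dim L'=1$. Note that this argument assumes nothing about the nilpotency class; on the contrary, once $\dim L'=1$, nilpotency gives $L'\cap Z(L)\neq 0$, hence $L'\subseteq Z(L)$, so the class is $2$, and the stem hypothesis then yields $Z(L)=L'$. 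From there your symplectic argument --- nondegeneracy of the induced alternating form on $L/Z(L)$, existence of a symplectic basis, and lifting to the presentation $[x_i,y_j]=\delta_{ij}z$ of Example~\ref{usual-Heisenberg-Lie-algebra} --- is correct and completes the proof.
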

%%%%%%%%%%
\subsection{Non-nilpotent Lie algebras over $\mathbb C$}
We finish this section by noting down~\cite{bi} Theorem A, which classifies non-nilpotent Lie 
algebras over $\mathbb C$ of type $(0, m)$.
\begin{theorem}
Let $L$ be a non-nilpotent complex Lie algebra of breadth type $(0,m)$. Then, $\dim(L/Z(L))\leq 
3$ and one of the following occurs:
\begin{enumerate}[label=(\alph*)]
\item $L/Z(L)$ is isomorphic to the unique $2$-dimensional non-abelian Lie algebra.
\item $L/Z(L)$ is isomorphic to $\mathfrak{sl}(2,\mathbb{C})$.
\item $L/Z(L)$ is isomorphic to the Lie algebra with basis $\{a,x,y\}$ and Lie brackets given by 
$[a,x]=x, [a,y]=-y, [x,y]=0$.
\end{enumerate}
\end{theorem}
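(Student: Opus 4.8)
The plan is to recast the breadth hypothesis as a statement about the central quotient $\bar L := L/Z(L)$ and then to exploit non-nilpotency to produce a toral element whose weight-space grading is forced to be extremely rigid. First I would record the reformulation: for $x\in L$ the map $\mathrm{ad}_x$ vanishes on $Z(L)$ and takes values in $L'$, so its rank depends only on the class $\bar x = x + Z(L)\in\bar L$, and $x$ is non-central exactly when $\bar x\neq 0$. Hence breadth type $(0,m)$ is equivalent to the uniformity statement $\dim[x,L]=m$ for every $x$ with $\bar x\neq 0$. In particular the hypothesis depends only on $\bar L$ together with the pairing $a_L\colon \bar L\times\bar L\to L'$, exactly the data preserved by isoclinism, which is consistent with the target being a statement about $\bar L$ alone.

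Next I would use non-nilpotency. By Engel's theorem there is $x\in L$ with $\mathrm{ad}_x$ not nilpotent, and since $k=\mathbb{C}$ is algebraically closed $\mathrm{ad}_x$ has a nonzero eigenvalue. I would decompose $L=\bigoplus_\alpha \tilde L_\alpha$ into the generalized eigenspaces of the inner derivation $\mathrm{ad}_x$; the Leibniz rule gives the grading relation $[\tilde L_\alpha,\tilde L_\beta]\subseteq \tilde L_{\alpha+\beta}$. A generalized eigenvector $v\in\tilde L_\alpha$ with $\alpha\neq 0$ cannot lie in $\ker\mathrm{ad}_x$, so $[x,v]\neq 0$ and $v$ is non-central; thus every nonzero weight space consists of non-central elements, each of breadth $m$. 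Since $\tilde L_0\supseteq\ker\mathrm{ad}_x=C_L(x)$ has dimension $\dim L-m$, the nonzero weight spaces together span dimension at most $m$. Choosing a genuine eigenvector $v$ with $[x,v]=\lambda v$, $\lambda\neq 0$, and rescaling $x$, the classes $\bar x,\bar v$ span a $2$-dimensional non-abelian subalgebra of $\bar L$ with $[\bar x,\bar v]=\bar v$, which is the minimal outcome (case (a)).

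The core of the argument is to prove that the weight set and the weight multiplicities are rigid, and here I would split on the Levi decomposition $\bar L = S\ltimes R$. If the semisimple part $S$ is nonzero it contains a copy of $\mathfrak{sl}(2,\mathbb{C})$; lifting a standard triple and viewing $L$ as an $\mathfrak{sl}(2,\mathbb{C})$-module, the even-weight grading of $\mathrm{ad}_h$ combined with the constancy $\dim[y,L]=m$ for all non-central $y$ should force the module to be as small as possible, collapsing $R$ into the centre and yielding $\bar L\cong\mathfrak{sl}(2,\mathbb{C})$ (case (b)). If $S=0$, so $\bar L$ is solvable, I would argue that the support of the grading lies in $\{0,\lambda,-\lambda\}$ with one-dimensional nonzero weight spaces: a second independent weight vector, or a weight outside $\{0,\lambda,-\lambda\}$, would produce a non-central element whose adjoint rank differs from $m$, contradicting uniformity. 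This pins $\bar L$ down to the $2$-dimensional non-abelian algebra (case (a)) or to the $3$-dimensional algebra spanned by a toral $a$ and weight vectors $x,y$ of weights $+1,-1$ with $[x,y]=0$ in $\bar L$ (case (c)); in every case $\dim\bar L\le 3$.

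The main obstacle is precisely this third step: converting the single numerical hypothesis, constancy of $\dim[y,L]=m$ across all non-central $y$, into the sharp bounds that the nonzero weight spaces are one-dimensional and that no weights beyond $\{0,\lambda,-\lambda\}$ (respectively $\{0,\pm\lambda,\pm 2\lambda\}$ inside $\mathfrak{sl}(2,\mathbb{C})$) occur. The delicate point is that breadth is a property of $L$ itself and not of $\bar L$, so I must compute $\dim[y,L]$ for cleverly chosen $y$ — linear combinations of weight vectors, and sums of a toral element with a weight vector — and compare them, which is where the interaction between the grading and the central extension $Z(L)$ has to be controlled. A secondary technical point, easily handled but worth flagging, is to justify passing to genuine eigenvectors (and, if necessary, to the semisimple part of $\mathrm{ad}_x$) over $\mathbb{C}$, and to verify at the end that each surviving $\bar L$ does admit central extensions $L$ realizing a uniform breadth $m$, so that the list is exactly the set of possibilities.
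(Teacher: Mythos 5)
First, a point of comparison: the paper does not prove this statement at all --- it is quoted verbatim as Theorem A of Barnea--Isaacs \cite{bi}, so there is no internal proof to measure your argument against, and your proposal has to stand on its own as a proof of that classification.

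On its own terms, what you have is a strategy outline with the decisive step missing, and you essentially say so yourself. The preliminary reductions are fine: breadth type $(0,m)$ means $\dim[x,L]=m$ for every non-central $x$; Engel's theorem supplies $x$ with $\mathrm{ad}_x$ non-nilpotent; the generalized eigenspace decomposition satisfies $[\tilde L_\alpha,\tilde L_\beta]\subseteq \tilde L_{\alpha+\beta}$; and $\sum_{\alpha\neq 0}\dim\tilde L_\alpha\le m$ because $C_L(x)\subseteq \tilde L_0$. But everything that actually constitutes the theorem --- that when the Levi factor is nonzero it is exactly $\mathfrak{sl}(2,\mathbb C)$ with the radical collapsing into the center, and that in the solvable case the nonzero weights are $\{\lambda,-\lambda\}$ with one-dimensional weight spaces --- is carried by phrases like ``should force'' and ``I would argue'' rather than by computations. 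These claims are precisely where the difficulty lies, because (as you correctly flag) the breadth hypothesis lives in $L$, not in $\bar L$. A concrete illustration of how delicate this is: the case (c) algebra itself, taken with trivial center, has breadth type $(0,1,2)$ (the toral element $a$ has breadth $2$ while $x$ and $y$ have breadth $1$), so it is \emph{not} of type $(0,m)$; only suitable central extensions, such as the $4$-dimensional one with $[x,y]=z$ central, have uniform breadth $2$. Hence the rank computations for elements like $a+x$ or $\beta x+\gamma y$ inside an arbitrary central extension are exactly where the theorem is decided, and they are absent; so is any derivation of the bound $\dim(L/Z(L))\le 3$, which you state as a conclusion but never obtain. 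What you have is a plausible plan --- likely close in spirit to the actual Barnea--Isaacs argument, which also analyzes the eigenspace structure of a non-nilpotent $\mathrm{ad}_x$ --- but not a proof.
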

\noindent Thus, we see that $m$ is necessarily $1$ or $2$ in this case. In case $(a)$, $L$ is of 
breadth type 
$(0,1)$. In case $(b)$ and $(c)$, $L$ is necessarily of breadth type $(0,2)$. On other hand, we 
will see that there are nilpotent Lie algebras of type $(0,m)$ for all $m\geq1$ (see for example 
$\mathcal L_m$ in Proposition~\ref{Lmtype0m} and Camina Lie algebras in 
Section~\ref{camina_Lie_algebra}).

%%%%%%%%%%%%%%%%%%%%%%%%%%%%%%%%%
\section{$p$-groups of exponent $p$ and nilpotent Lie algebras over 
$\mathbb{F}_p$}\label{correspondence}

In this section we explore a correspondence between $p$-groups of nilpotency class $2$, exponent 
$p$ and finite dimensional $2$-step nilpotent Lie algebras over the finite field $\mathbb{F}_p$ 
when $p$ is an odd prime. This seems to be known to experts but we could not find any 
reference for the same. We will see that this correspondence behaves well with respect to the 
conjugate type of a group and the breadth type of a Lie algebra. 

For a group $G$ one can associate a Lie ring $L(G)$ as follows (See Chapter 6,~\cite{kh}): 
$$\displaystyle L(G)=\bigoplus_{i} \gamma_i(G)/\gamma_{i+1}(G)$$
with the Lie bracket given by $$[a+\gamma_{i+1}(G), b+\gamma_{j+1}(G)]=[a,b]+\gamma_{i+j+1}(G)$$
where $[a,b]$ is the commutator of $a$ and $b$ in $G$. When $G$ is a $p$-group, $L(G)$ is a vector 
space over $\mathbb{F}_p$, and if $G$ is of nilpotency class $2$ then $L(G)=(G/G')\bigoplus G'$. 
The Lie bracket, in this case, is simply $[(a+G')+x,(b+G')+y]= [a,b]$ where $a+G',b+G'\in G/G'$ and 
$x,y\in G'$. To set up the desired correspondence we make use of a set of minimal generators of the 
group. 

We begin with the following special $p$-group,
\begin{equation}\label{group_Gm}
\mathcal{G}_m = \left\langle g_1, g_2, \ldots, g_{m+1} \mid g_i^p = [[g_i, g_j],g_k] = 1,\; 1\leq i, 
j, k \leq m \right\rangle.
\end{equation}
Analogous to these groups, we consider the free nilpotent Lie algebras $\mathcal{L}_m$ over a field $k$ as follows:
\begin{equation}\label{Lie_algebra_Lm}
\mathcal{L}_m = \big{\langle} x_1, \ldots, x_{m+1} \mid [[x_i, x_j], x_k] = 0,\; 1\leq i, j, k \leq 
m \big{\rangle}.
\end{equation}
That is, $\mathcal{G}_m \cong F_{m+1} / \langle F_{m+1}^p, \gamma_3(F_{m+1}) \rangle$, and 
$\mathcal{L}_m \cong FL_{m+1} /  \gamma_3(FL_{m+1})$ where $F_{m+1}$ is the free group of rank 
$m+1$ and $FL_{m+1}$ is the $m+1$-generated free Lie algebra over a field $k$. Note that 
$\mathcal{G}_{m}$ and $\mathcal{L}_{m}$ both are nilpotent of nilpotency class $2$. Further, the 
cardinality of any minimal generating set of both $\mathcal{G}_m$ and $\mathcal{L}_m$ is $m+1$.

We note some properties of the Lie algebra $\mathcal{L}_m$.
\begin{proposition}\label{Lmtype0m}
Let $\mathcal L= \mathcal{L}_m$ be the free $2$-step nilpotent Lie algebra over a field $k$ generated by $(m+1)$-elements. Then, 	
\begin{enumerate}
\item the dimension of $\mathcal{L}$ is $\frac{(m+1)(m+2)}{2}$, and the center $Z(\mathcal{L})$ is equal to the derived subalgebra $\mathcal{L}'$ of dimension $\frac{m(m+1)}{2}$.
\item For each $x\in \mathcal{L} \setminus Z(\mathcal{L})$, the centralizer $C_{\mathcal{L}}(x) = \langle x,\; Z(\mathcal{L})\rangle$, and is of dimension  $1 + \frac{m(m+1)}{2}$.
\item $\mathcal{L}$ is of breadth type $(0,m)$.
\end{enumerate}
\end{proposition}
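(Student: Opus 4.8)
The plan is to work with the vector space decomposition $\mathcal{L} = V \oplus \mathcal{L}'$, where $V$ is the span of the generators $x_1, \ldots, x_{m+1}$ and $\mathcal{L}'$ is the derived subalgebra. Since $\mathcal{L} = FL_{m+1}/\gamma_3(FL_{m+1})$ is the free $2$-step nilpotent Lie algebra, $\mathcal{L}'$ is exactly the degree-two component, spanned by the brackets $[x_i, x_j]$ with $1 \le i < j \le m+1$; freeness guarantees that these $\binom{m+1}{2}$ brackets are linearly independent and hence form a basis of $\mathcal{L}'$. This gives $\dim \mathcal{L}' = \frac{m(m+1)}{2}$ and $\dim \mathcal{L} = (m+1) + \frac{m(m+1)}{2} = \frac{(m+1)(m+2)}{2}$. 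Because $\mathcal{L}$ is $2$-step nilpotent we already have $\mathcal{L}' \subseteq Z(\mathcal{L})$; for the reverse inclusion I would take $z = \sum_i a_i x_i + w$ with $w \in \mathcal{L}'$, compute $[z, x_j] = \sum_{i \ne j} a_i [x_i, x_j]$, and use the linear independence of the basis brackets to conclude that $[z, x_j] = 0$ for every $j$ forces all $a_i = 0$. Hence $Z(\mathcal{L}) = \mathcal{L}'$, which proves (1).

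For (2), fix $x \in \mathcal{L} \setminus Z(\mathcal{L})$ and write $x = \sum_i a_i x_i + (\text{central})$ with $a = (a_1, \ldots, a_{m+1}) \ne 0$. For an element $y = \sum_j b_j x_j + (\text{central})$ I would expand $[x, y] = \sum_{i<j} (a_i b_j - a_j b_i)[x_i, x_j]$. Since the brackets $[x_i, x_j]$ are linearly independent, $y \in C_{\mathcal{L}}(x)$ if and only if $a_i b_j - a_j b_i = 0$ for all $i < j$, i.e. the $2 \times (m+1)$ matrix with rows $a$ and $b$ has rank at most $1$. As $a \ne 0$, this is equivalent to $b$ being a scalar multiple of $a$, so the image of $C_{\mathcal{L}}(x)$ in $\mathcal{L}/Z(\mathcal{L})$ is the one-dimensional line spanned by $x$. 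Therefore $C_{\mathcal{L}}(x) = \langle x, Z(\mathcal{L})\rangle$ and $\dim C_{\mathcal{L}}(x) = 1 + \frac{m(m+1)}{2}$.

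Statement (3) then follows by a direct breadth computation: a central element has breadth $0$, while for non-central $x$,
\[
b(x) = \dim \mathcal{L} - \dim C_{\mathcal{L}}(x) = \frac{(m+1)(m+2)}{2} - \left(1 + \frac{m(m+1)}{2}\right) = m,
\]
so the set of distinct breadths of elements is exactly $\{0, m\}$ and $\mathcal{L}$ has breadth type $(0, m)$. The only genuinely non-routine input is the linear independence of the brackets $[x_i, x_j]$ with $i < j$; this is precisely the freeness of $\mathcal{L}$ (the degree-two part of the free Lie algebra on $m+1$ generators has dimension $\binom{m+1}{2}$), and once it is in hand every remaining step is elementary linear algebra. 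I expect the rank-one reduction in (2) to be the main conceptual point, though it too is standard.
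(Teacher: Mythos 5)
Your proof is correct and complete: the paper itself offers no proof of this proposition (it is dismissed with ``all of the above propositions are easy to prove''), and your argument via the basis of brackets $[x_i,x_j]$, $i<j$, together with the rank-one condition on the minors $a_ib_j-a_jb_i$ is exactly the standard computation the authors have in mind. Nothing is missing; the linear independence of the degree-two brackets, which you correctly isolate as the only substantive input, is precisely what freeness of $\mathcal{L}_m$ supplies.
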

\noindent The following well-known properties of generating sets will be useful.
\begin{proposition}\label{automorphism_of_free_nilpotent_Lie_algebra}
Let $A$ and $B$ be two minimal generating set of $\mathcal{L}_m$. Then, any bijective map between 
$A$ and $B$ extends to an automorphism of $\mathcal{L}_m$.	
\end{proposition}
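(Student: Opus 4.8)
The plan is to prove that any bijection between two minimal generating sets of $\mathcal{L}_m$ extends to an automorphism. The key conceptual point is the \emph{freeness} of $\mathcal{L}_m$ in the variety of $2$-step nilpotent Lie algebras: since $\mathcal{L}_m \cong FL_{m+1}/\gamma_3(FL_{m+1})$, the quotient $\mathcal{L}_m$ is the free object on $m+1$ generators in this variety, so any set map from a free generating set into $\mathcal{L}_m$ extends uniquely to a Lie algebra endomorphism. I would make this the backbone of the argument.

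First I would fix a \emph{standard} minimal generating set, say $X = \{x_1,\dots,x_{m+1}\}$, the images of the free generators, and recall the structure from Proposition~\ref{Lmtype0m}: $Z(\mathcal{L}_m) = \mathcal{L}_m'$, and the central quotient $\overline{\mathcal{L}_m} = \mathcal{L}_m/Z(\mathcal{L}_m)$ is an $(m+1)$-dimensional abelian Lie algebra with basis the images $\bar{x}_1,\dots,\bar{x}_{m+1}$. The crucial observation is that a subset $\{a_1,\dots,a_{m+1}\}$ of $\mathcal{L}_m$ is a minimal generating set if and only if the images $\bar{a}_1,\dots,\bar{a}_{m+1}$ form a $k$-basis of $\overline{\mathcal{L}_m}$; this follows because $\mathcal{L}_m'$ is generated by brackets of generators and hence lies in the span of any lift, so minimal generators are exactly those that descend to a basis of the abelianization.

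Next, given two minimal generating sets $A = \{a_1,\dots,a_{m+1}\}$ and $B = \{b_1,\dots,b_{m+1}\}$ and a bijection sending $a_i \mapsto b_i$, I would use freeness twice. By the universal property, the set map $x_i \mapsto a_i$ extends to an endomorphism $\alpha$ of $\mathcal{L}_m$, and similarly $x_i \mapsto b_i$ extends to $\beta$. Each of $\alpha,\beta$ is surjective because its image contains a generating set, and a surjective endomorphism of a finite-dimensional algebra is an isomorphism; thus $\alpha,\beta \in \Aut(\mathcal{L}_m)$. The desired extension of the bijection $a_i \mapsto b_i$ is then $\beta \circ \alpha^{-1}$, which sends $a_i = \alpha(x_i)$ to $\beta(x_i) = b_i$ and is an automorphism as a composite of automorphisms.

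The main obstacle, and the step that needs care rather than difficulty, is verifying that endomorphisms built from generator maps are genuinely \emph{bijective} in finite dimension, and that minimality forces the image to be a full basis of the abelianization rather than merely a spanning set. I would address this by the dimension count from Proposition~\ref{Lmtype0m}: since $\dim \overline{\mathcal{L}_m} = m+1$, any generating set of $\mathcal{L}_m$ must map onto a spanning set of the $(m+1)$-dimensional space $\overline{\mathcal{L}_m}$, and a minimal generating set has exactly $m+1$ elements, so its image is necessarily a basis. Surjectivity of $\alpha$ on all of $\mathcal{L}_m$ then follows because the derived subalgebra $\mathcal{L}_m'$ is spanned by brackets $[a_i,a_j]$ lying in the image, so the image contains both a complement to $Z(\mathcal{L}_m)$ and $Z(\mathcal{L}_m) = \mathcal{L}_m'$ itself, hence equals $\mathcal{L}_m$.
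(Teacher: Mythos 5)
Your proof is correct. Note that the paper gives no argument at all for this proposition (it is declared ``easy to prove'' and left to the reader), and your route --- freeness of $\mathcal{L}_m \cong FL_{m+1}/\gamma_3(FL_{m+1})$ in the variety of $2$-step nilpotent Lie algebras, extending $x_i \mapsto a_i$ and $x_i \mapsto b_i$ to endomorphisms $\alpha,\beta$, observing each is surjective and hence bijective in finite dimension, and taking $\beta\circ\alpha^{-1}$ --- is exactly the standard argument the authors have in mind, with the needed facts (minimal generating sets have $m+1$ elements and descend to bases of $\mathcal{L}_m/\mathcal{L}_m'$) properly justified.
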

\noindent Consider a set of $(m+1)$ elements in $\mathcal{L}_m$, say  $b_1, b_2, \ldots, b_{m+1}$, 
expressed in terms of the generators,
$$b_j  =  \left( \sum_{i=1}^{m+1} {\alpha_{ij}} x_i \right)\; + h_j \hspace{2 cm} \text{ where }1\leq j\leq m+1$$
where $h_1,h_2,\ldots, h_{m+1}\in \mathcal{L}_m'$. Then,
\begin{proposition}\label{set_of_generators}
The set $\{b_1,b_2,\ldots,b_{m+1}\}$ generates $\mathcal{L}_m$ if and only if the matrix 
$(\alpha_{ij})$ is invertible.
\end{proposition}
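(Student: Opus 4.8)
The plan is to reduce the problem to a statement about the abelianization $\mathcal{L}_m / \mathcal{L}_m'$, exploiting the fact that $\mathcal{L}_m$ is $2$-step nilpotent so that $\mathcal{L}_m' = [\mathcal{L}_m, \mathcal{L}_m]$ coincides with the center and is therefore central. By Proposition~\ref{Lmtype0m}, the quotient $\overline{\mathcal{L}_m} = \mathcal{L}_m / \mathcal{L}_m'$ is abelian of dimension $m+1$ with basis $\overline{x_1}, \ldots, \overline{x_{m+1}}$, and since each $h_j$ lies in $\mathcal{L}_m'$, the image of $b_j$ in $\overline{\mathcal{L}_m}$ is $\overline{b_j} = \sum_{i=1}^{m+1} \alpha_{ij}\,\overline{x_i}$. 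Thus passing to the quotient translates the matrix $(\alpha_{ij})$ into the matrix expressing $\{\overline{b_j}\}$ in terms of the basis $\{\overline{x_i}\}$.

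First I would dispose of the forward direction. If $\{b_1, \ldots, b_{m+1}\}$ generates $\mathcal{L}_m$, then applying the quotient map shows that $\{\overline{b_1}, \ldots, \overline{b_{m+1}}\}$ spans the $(m+1)$-dimensional vector space $\overline{\mathcal{L}_m}$. Hence these vectors form a basis, which forces the matrix $(\alpha_{ij})$ to be invertible.

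For the converse, suppose $(\alpha_{ij})$ is invertible and let $M = \langle b_1, \ldots, b_{m+1} \rangle$ be the subalgebra they generate. Invertibility makes $\{\overline{b_j}\}$ a basis of $\overline{\mathcal{L}_m}$, so the image of $M$ under the quotient map is all of $\overline{\mathcal{L}_m}$; equivalently $M + \mathcal{L}_m' = \mathcal{L}_m$. The key observation is that because $\mathcal{L}_m'$ is central, every bracket having a factor in $\mathcal{L}_m'$ vanishes, so
\begin{equation*}
\mathcal{L}_m' = [\mathcal{L}_m, \mathcal{L}_m] = [M + \mathcal{L}_m',\; M + \mathcal{L}_m'] = [M, M] \subseteq M.
\end{equation*}
Combining $\mathcal{L}_m' \subseteq M$ with $M + \mathcal{L}_m' = \mathcal{L}_m$ yields $M = \mathcal{L}_m$, as desired.

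The argument is essentially routine; the only point requiring care is the converse, where one must recover the entire derived subalgebra inside $M$ rather than merely its image in the abelianization. I expect this step --- using $2$-step nilpotency to show $\mathcal{L}_m' = [M,M]$ --- to be the crux, since generating the abelianization only gives $M + \mathcal{L}_m' = \mathcal{L}_m$ a priori, and it is precisely the centrality of $\mathcal{L}_m'$ that upgrades this containment to equality.
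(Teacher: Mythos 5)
Your proof is correct: the forward direction by passing to the abelianization, and the converse via the Frattini-type argument---using that $\mathcal{L}_m'$ is central to get $\mathcal{L}_m' = [M+\mathcal{L}_m',\,M+\mathcal{L}_m'] = [M,M] \subseteq M$ and hence $M = \mathcal{L}_m$---is exactly the standard argument for $2$-step nilpotent Lie algebras. The paper gives no explicit proof of this proposition (it only remarks that it is ``easy to prove''), so your write-up supplies precisely the routine verification the authors intended, with the one genuinely non-trivial point (upgrading $M + \mathcal{L}_m' = \mathcal{L}_m$ to $M = \mathcal{L}_m$) handled correctly.
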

\noindent All of the above propositions are easy to prove and hold for $\mathcal{G}_m$ as well with 
necessary changes. Now we consider $\mathcal L_m$ over $\mathbb F_p$. We point 
out that $\mathcal{G}_m/\mathcal{G}_m'$ and $\mathcal{L}_m/ \mathcal{L}_m'$, both can be seen as 
vector spaces over $\mathbb{F}_p$ of dimension 
$m+1$. Similarly, $Z(\mathcal{G}_m) = \mathcal{G}_m'$ and $Z(\mathcal{L}_m) = \mathcal{L}_m'$, both 
are vector spaces over $\mathbb{F}_p$ of dimension $\frac{m(m+1)}{2}$.

Now, we fix $m\geq 1$ and define a map $\psi \colon \mathcal{G}_m\rightarrow \mathcal{L}_m$ as 
follows. An arbitrary element $g\in \mathcal{G}_m$ can be written as $$\displaystyle g = \left( 
\prod_{i=1}^{m+1} g_i^{\alpha_i} \right)\; \left( \prod_{1\leq j < r \leq m+1} [g_j, 
g_r]^{\beta_{j,r}} \right),\;\; \text{for some}\; 0\leq \alpha_i,\; \beta_{j, r}\leq p-1.$$
Similarly, an arbitrary element $x \in \mathcal{L}_m$ can be written as $$\displaystyle x = \left( 
\sum_{i=1}^{m+1} \gamma_i x_i \right) +  \left( \sum_{1\leq j < r \leq m+1} \xi_{j, r} [x_j, x_r] 
\right),\;\; \text{for some}\;\gamma_i,\; \xi_{j, r}\in \mathbb{F}_p.$$
The map $\psi$ is given by sending $g\in \mathcal G_m$ as follows:
$$ \left( \prod_{i=1}^{m+1} g_i^{\alpha_i} \right) \left( \prod_{1\leq j < k\leq m+1} [g_j, g_k]^{\beta_{j,k}} \right) \mapsto \left( \sum_{i=1}^{m+1} \alpha_i x_i \right) +  \left(\sum_{1\leq j < k\leq m+1} \beta_{j,k}[x_j,x_k] \right).$$
\noindent This map $\psi$ has some interesting properties. It induces a vector space 
isomorphism $\psi_I \colon \mathcal{G}_m/\mathcal{G}_m' \rightarrow \mathcal{L}_m/ \mathcal{L}_m'$, 
and a vector space isomorphism $\psi_R \colon Z(\mathcal{G}_m) \rightarrow 
Z(\mathcal{L}_m)$ by restricting the map $\psi$. Moreover, the following diagram commutes:
\[
\begin{CD}
\mathcal{G}_m/\mathcal{G}_m' \times \mathcal{G}_m/\mathcal{G}_m'  @>\psi_I \times \psi_I >> 
\mathcal{L}_m/ \mathcal{L}_m' \times \mathcal{L}_m/ \mathcal{L}_m'\\
@V{\psi_1}VV        @VV{\psi_2}V\\
Z(\mathcal{G}_m) @>\psi_R>> Z(\mathcal{G}_m),
\end{CD}
\]
where $\psi_1 \colon \mathcal{G}_m/\mathcal{G}_m' \times \mathcal{G}_m/\mathcal{G}_m' \mapsto 
Z(\mathcal{G}_m)$, and $\psi_2 \colon \mathcal{L}_m/\mathcal{L}_m' \times 
\mathcal{L}_m/ \mathcal{L}_m' \mapsto Z(\mathcal{L}_m)$ are the commutator and the Lie bracket 
map, respectively.

The set bijection $\psi\colon \mathcal{G}_m \rightarrow \mathcal{L}_m$ induces a set bijection 
$\Psi \colon \Aut(\mathcal{G}_m) \rightarrow \Aut(\mathcal{L}_m)$ as follows. In the view of 
Proposition~\ref{automorphism_of_free_nilpotent_Lie_algebra} and~\ref{set_of_generators} we need to 
define this at the level of the generating sets. Suppose $\theta_{\mathcal{G}}$ is in 
$\Aut(\mathcal G_m)$ defined by $$\theta_{\mathcal{G}}(g_j)=\left( \prod_{i=1}^{m+1} 
g_i^{\alpha_{ij}} \right)\;\; \left( \prod_{1\leq r < s\leq m+1} [g_r,g_s]^{\beta_{rsj}} \right), 
\text{ for all } 1\leq j\leq m+1$$
then we define $\Psi(\theta_{\mathcal G})$ to be $\theta_{\mathcal{L}}$ as follows
$$\theta_{\mathcal{L}}(x_j)= \left(\sum_{i=1}^{m+1} \alpha_{ij}x_i \right) +  \left(\sum_{1\leq r < s\leq m+1} \beta_{rsj}[x_r,x_s] \right), \text{ for all } 1\leq j\leq m+1.$$
Let $N_1, N_2 \leq Z(\mathcal{G}_m)$ and $\theta \in \Aut(\mathcal{G}_m)$ such that $\theta(N_1) = 
N_2$. Then it follows that,
$$\Psi(\theta)(\psi_R(N_1)) = \psi_R(N_2).$$
\noindent The proof of the following two lemmas can de done easily by applying 
Proposition~\ref{automorphism_of_free_nilpotent_Lie_algebra} and Proposition~\ref{set_of_generators} 
and its group analogues.
\begin{lemma}\label{Isomorphism_between_central_quotients_extends to_an_automorphism-I}
Let $N_1, N_2$ be two central subgroups of $\mathcal G= \mathcal{G}_m$. Then, $\mathcal{G}/N_1 
\cong \mathcal{G}/N_2$ if and only if there exists an automorphism $\theta$ of $\mathcal{G}$ such 
that $\theta (N_1)=N_2$.
\end{lemma}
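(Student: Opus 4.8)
The plan is to prove both directions of this "if and only if" statement by transferring everything through the bijection $\psi$ and its induced map $\Psi$ on automorphisms, which were constructed precisely to make group-theoretic and Lie-algebraic structure correspond. I would first carefully record what $\psi$ does at the level of the relevant objects: since $\psi_R \colon Z(\mathcal{G}_m) \to Z(\mathcal{L}_m)$ is a vector-space isomorphism and $\Psi \colon \Aut(\mathcal{G}_m) \to \Aut(\mathcal{L}_m)$ is a bijection satisfying the compatibility $\Psi(\theta)(\psi_R(N_1)) = \psi_R(N_2)$ whenever $\theta(N_1) = N_2$, the central subgroups of $\mathcal{G}_m$ correspond bijectively (via $\psi_R$) to the central subalgebras of $\mathcal{L}_m$, and this correspondence intertwines the $\Aut(\mathcal{G}_m)$-action with the $\Aut(\mathcal{L}_m)$-action. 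Strictly speaking I should first observe that $\psi_R$ sends a subgroup $N \leq Z(\mathcal{G}_m)$ to an $\mathbb{F}_p$-subspace $\psi_R(N) \leq Z(\mathcal{L}_m)$, which is automatically a central subalgebra since the bracket vanishes on $Z(\mathcal{L}_m)$; this is the analogue of the statement being proved, namely Lemma~\ref{Isomorphism_between_central_quotients_extends to_an_automorphism-I}, which I would establish in exactly the same style.

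For the Lemma itself, the forward direction ($\mathcal{G}/N_1 \cong \mathcal{G}/N_2$ implies the existence of $\theta$ with $\theta(N_1) = N_2$) is the substantive one. Given an isomorphism $\bar\varphi \colon \mathcal{G}/N_1 \to \mathcal{G}/N_2$, the strategy is to lift it to an automorphism of $\mathcal{G} = \mathcal{G}_m$ itself. Because $N_1, N_2 \subseteq Z(\mathcal{G}_m) = \mathcal{G}_m'$, the quotients $(\mathcal{G}/N_i)/(\mathcal{G}/N_i)' \cong \mathcal{G}/\mathcal{G}_m'$ are canonically identified, so $\bar\varphi$ induces an automorphism of $\mathcal{G}_m/\mathcal{G}_m'$, i.e.\ an invertible linear map $(\alpha_{ij})$ on the $(m+1)$-dimensional generator space. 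I would pick coset representatives $g_1, \ldots, g_{m+1}$ and use $\bar\varphi$ together with arbitrary lifts to prescribe images $\theta(g_j)$ in $\mathcal{G}_m$ whose generator-parts are governed by $(\alpha_{ij})$; by the group analogue of Proposition~\ref{set_of_generators} this image set generates $\mathcal{G}_m$ (the matrix is invertible), and by the group analogue of Proposition~\ref{automorphism_of_free_nilpotent_Lie_algebra} the assignment extends to an automorphism $\theta$ of $\mathcal{G}_m$. The point is that $\mathcal{G}_m$ is the \emph{free} object in its variety, so any map on generators that remains a generating set extends; tracking the commutator data then forces $\theta(N_1) = N_2$.

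The reverse direction is immediate: if $\theta \in \Aut(\mathcal{G}_m)$ satisfies $\theta(N_1) = N_2$, then $\theta$ descends to a well-defined isomorphism $\mathcal{G}/N_1 \to \mathcal{G}/N_2$, giving $\mathcal{G}/N_1 \cong \mathcal{G}/N_2$. I expect the main obstacle to be the bookkeeping in the forward direction, specifically verifying that the lift $\theta$ can be chosen to send $N_1$ exactly onto $N_2$ rather than merely onto some central subgroup isomorphic to it. The resolution is that $\bar\varphi$ already carries the full quotient structure, so the induced action of $\theta$ on $\mathcal{G}_m' = Z(\mathcal{G}_m)$ is determined (up to the freedom in lifting) by $\bar\varphi$ on the derived quotients; comparing $\bar\varphi$ with the commutator map via the commuting diagram for $\psi_1$ pins down $\theta(N_1) = N_2$. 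Once this is in place, the Lie-algebra analogue (which is what the paper ultimately needs) follows formally by transporting the entire argument across $\psi$, $\psi_R$, and $\Psi$, using the stated compatibility, so no separate calculation is required there.
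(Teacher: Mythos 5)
Your proof is correct and is essentially the paper's intended argument: the paper offers no written proof, saying only that the lemma follows easily from the group analogues of Proposition~\ref{automorphism_of_free_nilpotent_Lie_algebra} and Proposition~\ref{set_of_generators}, and your lifting argument is exactly that realization --- use the canonical identification $(\mathcal{G}/N_i)/(\mathcal{G}/N_i)' \cong \mathcal{G}_m/\mathcal{G}_m'$ to get an invertible matrix, lift generator images, extend to $\theta$ by freeness, and track commutators to force $\theta(N_1)=N_2$. One small remark: the appeal to the commuting diagram for $\psi_1$ in your final step is unnecessary (that diagram relates $\mathcal{G}_m$ to $\mathcal{L}_m$, not $\bar\varphi$ to $\theta$); the needed identity $\bar\varphi(zN_1)=\theta(z)N_2$ for $z\in\mathcal{G}_m'$ follows directly from $\bar\varphi$ being a homomorphism together with the fact that the lifts are ambiguous only up to elements of $N_2\subseteq Z(\mathcal{G}_m)$, which do not affect commutators.
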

\begin{lemma}\label{Isomorphism_between_central_quotients_extends to_an_automorphism-II}
Let $I_1, I_2$ be two central ideals of $\mathcal{L}=\mathcal{L}_m$. Then, $\mathcal{L}/I_1 \cong 
\mathcal{L}/I_2$ if and only if there exists an automorphism $\theta$ of $\mathcal{L}$ such that 
$\theta (I_1) = I_2$.
\end{lemma}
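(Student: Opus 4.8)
The plan is to prove Lemma~\ref{Isomorphism_between_central_quotients_extends to_an_automorphism-II} as the Lie-algebra analogue of Lemma~\ref{Isomorphism_between_central_quotients_extends to_an_automorphism-I}, mirroring the group argument step for step. The backward direction is immediate: if $\theta\in\Aut(\mathcal{L})$ satisfies $\theta(I_1)=I_2$, then $\theta$ descends to a well-defined map $\bar\theta\colon\mathcal{L}/I_1\to\mathcal{L}/I_2$ which is an isomorphism, since $\theta$ is an isomorphism carrying $I_1$ bijectively onto $I_2$. The content is therefore entirely in the forward direction, so I would spend the bulk of the argument there.

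For the forward direction, suppose $\varphi\colon\mathcal{L}/I_1\to\mathcal{L}/I_2$ is an isomorphism; I want to lift it to an automorphism $\theta$ of $\mathcal{L}=\mathcal{L}_m$ carrying $I_1$ to $I_2$. First I would observe that since $I_1,I_2\subseteq Z(\mathcal{L})=\mathcal{L}'$ (by Proposition~\ref{Lmtype0m}(1)), the quotients $\mathcal{L}/I_j$ have the same minimal number of generators as $\mathcal{L}$, namely $m+1$, because the generators $x_1,\dots,x_{m+1}$ live outside the derived subalgebra and their images still minimally generate the quotient. Choose a minimal generating set $\{\bar x_1,\dots,\bar x_{m+1}\}$ of $\mathcal{L}/I_1$ coming from the standard generators, and for each $j$ pick a preimage in $\mathcal{L}$ of $\varphi(\bar x_j)$; call these $b_1,\dots,b_{m+1}\in\mathcal{L}$. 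Writing each $b_j=\bigl(\sum_i\alpha_{ij}x_i\bigr)+h_j$ with $h_j\in\mathcal{L}'$ as in the setup preceding Proposition~\ref{set_of_generators}, the fact that $\varphi$ is an isomorphism onto $\mathcal{L}/I_2$ forces the images of the $b_j$ to minimally generate $\mathcal{L}/I_2$, hence (lifting back to $\mathcal{L}$, again using that $I_2\subseteq\mathcal{L}'$) the matrix $(\alpha_{ij})$ is invertible. By Proposition~\ref{set_of_generators} the elements $\{b_1,\dots,b_{m+1}\}$ then form a minimal generating set of $\mathcal{L}$ itself, and by Proposition~\ref{automorphism_of_free_nilpotent_Lie_algebra} the assignment $x_j\mapsto b_j$ extends to an automorphism $\theta\in\Aut(\mathcal{L})$.

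It remains to check that this $\theta$ genuinely induces $\varphi$ on the quotient and, consequently, that $\theta(I_1)=I_2$. By construction $\theta(x_j)\equiv$ (a lift of $\varphi(\bar x_j)$) modulo $I_2$, so the composite $\mathcal{L}\xrightarrow{\theta}\mathcal{L}\twoheadrightarrow\mathcal{L}/I_2$ agrees with $\mathcal{L}\twoheadrightarrow\mathcal{L}/I_1\xrightarrow{\varphi}\mathcal{L}/I_2$ on the generators $x_j$; since both are Lie-algebra homomorphisms and the $x_j$ generate $\mathcal{L}$, they agree everywhere. In particular $\theta$ maps the kernel $I_1$ of the second composite into the kernel $I_2$ of the first, and running the same argument with $\varphi^{-1}$ (or simply using that $\theta$ is an automorphism and $\dim I_1=\dim I_2$, which follows because $\varphi$ restricts to an isomorphism $Z(\mathcal{L})/I_1\to Z(\mathcal{L})/I_2$ with $Z(\mathcal{L})$ preserved) gives $\theta(I_1)=I_2$.

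The step I expect to be the genuine obstacle is the clean bookkeeping around lifts and the verification that $\varphi$ sends the central subspace structure correctly, i.e.\ that $\varphi$ carries $Z(\mathcal{L}/I_1)$ to $Z(\mathcal{L}/I_2)$ so that the invertibility of $(\alpha_{ij})$ and the equality $\theta(I_1)=I_2$ both follow; all of this is routine given Propositions~\ref{automorphism_of_free_nilpotent_Lie_algebra} and~\ref{set_of_generators}, which is precisely why the statement notes that the proof ``can be done easily'' by transporting the group analogue through the setup established above.
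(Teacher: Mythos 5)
Your proof is correct and is essentially the argument the paper intends: the paper gives no explicit proof, stating only that the lemma follows easily from Proposition~\ref{automorphism_of_free_nilpotent_Lie_algebra} and Proposition~\ref{set_of_generators}, and your lifting argument (pull back $\varphi$ of the standard generators, check invertibility of $(\alpha_{ij})$ modulo $\mathcal{L}'$, extend $x_j\mapsto b_j$ to an automorphism, and compare the two composites to get $\theta(I_1)=I_2$) is precisely that intended application.
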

\noindent Using these we conclude,
\begin{theorem}\label{Relation_between_central_quotients}
Let $N_1, N_2$ be two central subgroups of $\mathcal G=\mathcal{G}_m$. We set $I_1 = \psi_R (N_1)$, 
and $I_2 = \psi_R (N_2)$. Then, $\mathcal{G}/N_1 \cong \mathcal{G}/N_2$ if and only if 
$\mathcal{L}/I_1 \cong \mathcal{L}/I_2.$
\end{theorem}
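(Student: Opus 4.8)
The plan is to reduce both isomorphism statements to the existence of suitable automorphisms using the two preceding lemmas, and then transport between the group and the Lie algebra through the bijection $\Psi$. By Lemma~\ref{Isomorphism_between_central_quotients_extends to_an_automorphism-I}, the relation $\mathcal{G}/N_1 \cong \mathcal{G}/N_2$ is equivalent to the existence of some $\theta \in \Aut(\mathcal{G})$ with $\theta(N_1) = N_2$, and by Lemma~\ref{Isomorphism_between_central_quotients_extends to_an_automorphism-II} the relation $\mathcal{L}/I_1 \cong \mathcal{L}/I_2$ is equivalent to the existence of some $\phi \in \Aut(\mathcal{L})$ with $\phi(I_1) = I_2$. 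Thus it suffices to show that $\Psi$ carries the set of automorphisms $\theta$ of $\mathcal{G}$ with $\theta(N_1)=N_2$ bijectively onto the set of automorphisms $\phi$ of $\mathcal{L}$ with $\phi(I_1)=I_2$.

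The key step is to promote the one-sided relation recorded just before the statement, namely the implication $\theta(N_1)=N_2 \Rightarrow \Psi(\theta)(\psi_R(N_1))=\psi_R(N_2)$, into a genuine equivalence. For this I would first verify that every $\theta\in\Aut(\mathcal{G})$ restricts to an automorphism $\theta|_Z$ of $Z(\mathcal{G}_m)$, that $\Psi(\theta)$ restricts to an automorphism of $Z(\mathcal{L}_m)$, and that these restrictions are intertwined by $\psi_R$, i.e. the square
\[
\begin{CD}
Z(\mathcal{G}_m) @>{\theta|_Z}>> Z(\mathcal{G}_m)\\
@V{\psi_R}VV @VV{\psi_R}V\\
Z(\mathcal{L}_m) @>{\Psi(\theta)|_Z}>> Z(\mathcal{L}_m)
\end{CD}
\]
commutes. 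Both vertical maps send the basis element $[g_i,g_k]$ of $Z(\mathcal{G}_m)$ to $[x_i,x_k]$. Since $\mathcal{G}_m$ has class $2$, the central factors of $\theta(g_j)$ drop out of any commutator, so $\theta|_Z$ is governed entirely by the linear coefficients $\alpha_{ij}$ via $[g_r,g_s]\mapsto\prod_{i,k}[g_i,g_k]^{\alpha_{ir}\alpha_{ks}}$, while by the definition of $\Psi$ the map $\Psi(\theta)$ acts on $Z(\mathcal{L}_m)$ by the matching rule $[x_r,x_s]\mapsto\sum_{i,k}\alpha_{ir}\alpha_{ks}[x_i,x_k]$. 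Comparing these expressions through $\psi_R$ shows the square commutes. Since $\psi_R$ is a vector-space isomorphism, the commuting square upgrades the recorded implication to the equivalence $\theta(N_1)=N_2 \iff \Psi(\theta)(I_1)=I_2$.

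Granting this, the desired bijection follows formally. If $\theta(N_1)=N_2$ then $\Psi(\theta)(I_1)=\Psi(\theta)(\psi_R(N_1))=\psi_R(\theta(N_1))=\psi_R(N_2)=I_2$, so $\Psi(\theta)$ lies in the target set. Conversely, given $\phi\in\Aut(\mathcal{L})$ with $\phi(I_1)=I_2$, write $\phi=\Psi(\theta)$ using that $\Psi$ is a bijection of the full automorphism groups; then the commuting square gives $\psi_R(\theta(N_1))=\Psi(\theta)(\psi_R(N_1))=\phi(I_1)=I_2=\psi_R(N_2)$, and injectivity of $\psi_R$ yields $\theta(N_1)=N_2$. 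Composing the two lemmas with this bijection then gives the equivalence $\mathcal{G}/N_1\cong\mathcal{G}/N_2 \iff \mathcal{L}/I_1\cong\mathcal{L}/I_2$.

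The main obstacle is the commuting square, that is, checking that $\Psi$ intertwines the restriction-to-center maps through $\psi_R$; once this compatibility is established the remainder is a formal chain of equivalences. I expect the only genuine subtlety to be the converse direction, which cannot use the displayed property alone, since that is merely one implication, and instead relies on the commuting square together with the injectivity of $\psi_R$.
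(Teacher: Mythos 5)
Your proof is correct and takes essentially the same approach as the paper: the paper deduces the theorem directly from the two lemmas characterizing isomorphism of central quotients via automorphisms of $\mathcal{G}_m$ and $\mathcal{L}_m$, combined with the displayed compatibility $\theta(N_1)=N_2 \Rightarrow \Psi(\theta)(\psi_R(N_1))=\psi_R(N_2)$. Your explicit verification of the commuting square, which upgrades that one-sided implication to an equivalence and thereby handles the converse direction via injectivity of $\psi_R$, is precisely the bookkeeping the paper leaves implicit.
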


We are now ready to give a bijection $\Phi$ between the families $\mathcal{PG}_m$ consisting of 
certain finite groups and $\mathcal{NL}_m$ consisting of certain finite dimensional Lie algebras, 
which are defined as follows:
\begin{eqnarray*}
\mathcal{PG}_m &=&\{ G \mid G \text{ is a $(m+1)$-generated } p\text{-group of nilpotency 
class } 2 \text{ and exponent } p\}.\\ 
\mathcal{NL}_m &=&\{ L \mid L \text{ is a $(m+1)$-generated 2-step nilpotent Lie algebra over } 
\mathbb{F}_p\}.
\end{eqnarray*}
We note that $\mathcal G_m\in \mathcal{PG}_m$ and $\mathcal L_m\in \mathcal{NL}_m$. The map $\Phi$ 
respects the map $\psi$. Let $G\in \mathcal{PG}_m$. Then, there exists a central subgroup $N \lneq 
Z(\mathcal{G}_m)$ such that $G \cong \mathcal{G}_m/N$. We map $G$ to $\mathcal L_m / \psi_R(N)$. We 
know that a Lie algebra $L\in \mathcal{NL}_m$ is of the form $L \cong \mathcal{L}_m/I$ for some 
central ideal $I \lneq Z(\mathcal{L}_m)$. Thus we have,
\begin{theorem}
There exists a one-one correspondence $\Phi$ between the families $\mathcal{PG}_m$ and $\mathcal{NL}_m$ given by 
$$\Phi(G) = \mathcal{L}_m/ \psi_R (N)$$ 
where $G \cong \mathcal{G}_m/N$ for some $N\lneq Z(\mathcal{G}_m)$. Moreover, if $G_1$ and $G_2$ 
are two groups in $\mathcal{PG}_m$ then $G_1 \cong G_2$ if and only if $\Phi (G_1) \cong \Phi 
(G_2)$.
\end{theorem}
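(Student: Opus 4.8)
The plan is to verify the three things a one-one correspondence demands --- that $\Phi$ is well defined on isomorphism classes, that it is injective, and that it is surjective --- using Theorem~\ref{Relation_between_central_quotients} together with the fact that $\psi_R\colon Z(\mathcal G_m)\to Z(\mathcal L_m)$ is a vector space isomorphism. The one preliminary observation I would record is that, since $Z(\mathcal G_m)=\mathcal G_m'$ is elementary abelian and $Z(\mathcal L_m)=\mathcal L_m'$ is central, the central subgroups of $\mathcal G_m$ are exactly the $\mathbb F_p$-subspaces of $Z(\mathcal G_m)$ and the central ideals of $\mathcal L_m$ are exactly the $\mathbb F_p$-subspaces of $Z(\mathcal L_m)$. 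Hence $\psi_R$ restricts to an inclusion-preserving bijection between these two collections of substructures, carrying proper subgroups to proper ideals.

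First I would check well-definedness and that the image lands in $\mathcal{NL}_m$. Given $G\in\mathcal{PG}_m$, choose $N\lneq Z(\mathcal G_m)$ with $G\cong\mathcal G_m/N$ and set $I=\psi_R(N)\lneq Z(\mathcal L_m)$. Quotienting by a central subgroup contained in $\mathcal G_m'$ does not change the Frattini quotient, so $\mathcal G_m/N$ is still $(m+1)$-generated of exponent $p$ and class $2$; the analogous computation shows $\mathcal L_m/I$ is an $(m+1)$-generated $2$-step nilpotent Lie algebra over $\mathbb F_p$, so $\Phi(G)=\mathcal L_m/I\in\mathcal{NL}_m$. If $\mathcal G_m/N_1\cong G\cong\mathcal G_m/N_2$ are two presentations, then $\mathcal G_m/N_1\cong\mathcal G_m/N_2$, and the forward direction of Theorem~\ref{Relation_between_central_quotients} yields $\mathcal L_m/\psi_R(N_1)\cong\mathcal L_m/\psi_R(N_2)$; thus the isomorphism class of $\Phi(G)$ is independent of the chosen $N$. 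This same implication, read for $G_1\cong G_2$, gives the forward direction of the ``moreover'' claim.

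Next I would establish bijectivity. For injectivity, suppose $\Phi(G_1)\cong\Phi(G_2)$ with $G_i\cong\mathcal G_m/N_i$; then $\mathcal L_m/\psi_R(N_1)\cong\mathcal L_m/\psi_R(N_2)$, and the reverse direction of Theorem~\ref{Relation_between_central_quotients} gives $\mathcal G_m/N_1\cong\mathcal G_m/N_2$, i.e.\ $G_1\cong G_2$; this is simultaneously the reverse direction of the ``moreover'' claim. For surjectivity, take $L\in\mathcal{NL}_m$ and write $L\cong\mathcal L_m/I$ for some central ideal $I\lneq Z(\mathcal L_m)$; since $\psi_R$ is a bijection onto the subspaces of $Z(\mathcal L_m)$, set $N=\psi_R^{-1}(I)\lneq Z(\mathcal G_m)$, so that $G:=\mathcal G_m/N\in\mathcal{PG}_m$ satisfies $\Phi(G)=\mathcal L_m/\psi_R(N)=\mathcal L_m/I\cong L$.

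Given how much is packaged into Theorem~\ref{Relation_between_central_quotients} and the isomorphism $\psi_R$, the argument is essentially bookkeeping, and I do not expect a serious obstacle. The only points needing genuine care are the two claims that make the families match up: that $\psi_R$ sends \emph{proper} central subgroups to \emph{proper} central ideals (needed so that the quotients remain nonabelian of class exactly $2$ and hence stay inside $\mathcal{PG}_m$ and $\mathcal{NL}_m$), and that taking the quotient by a subobject of the derived subalgebra leaves the minimal number of generators equal to $m+1$. Both follow from $\psi_R$ being a linear isomorphism and from the identification of the derived subalgebra with the center, so neither is deep.
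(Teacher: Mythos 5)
Your proposal is correct and follows essentially the same route as the paper, whose entire proof is the single remark that the theorem ``follows from Theorem~\ref{Relation_between_central_quotients}''; your write-up just makes explicit the bookkeeping (well-definedness, injectivity, surjectivity via $\psi_R$ being a linear isomorphism on the centers) that the paper leaves implicit.
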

\noindent This follows from Theorem~\ref{Relation_between_central_quotients}.

Further, the notion of conjugate type carries over to the notion of breadth type under this correspondence.
\begin{proposition}
A group $G\in \mathcal{PG}_m$ is of conjugate type $(1=p^0, p^{m_1}, p^{m_2}, \ldots, p^{m_r})$ if 
and only if the Lie algebra $\Phi (G)\in \mathcal{NL}_m$ is of breadth type $(0, m_1, m_2, \ldots, 
m_r)$.
\end{proposition}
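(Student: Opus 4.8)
The plan is to show that, under the correspondence $\Phi$, the $p$-logarithm of each conjugacy class size in $G$ equals the breadth of the matching element of $\Phi(G)$, so that the conjugate type and the breadth type agree as sets of values. Write $G\cong\mathcal G_m/N$ with $N\le Z(\mathcal G_m)=\mathcal G_m'$, and $L:=\Phi(G)=\mathcal L_m/I$ with $I=\psi_R(N)\le Z(\mathcal L_m)=\mathcal L_m'$. Since both $G$ and $L$ have nilpotency class $2$, for $g\in\mathcal G_m$ the map $hN\mapsto[g,h]N$ is a homomorphism $G\to G'=\mathcal G_m'/N$ with kernel $C_G(gN)$, and for $x\in\mathcal L_m$ the adjoint map $y+I\mapsto[x,y]+I$ is an $\mathbb F_p$-linear map $L\to L'=\mathcal L_m'/I$ with kernel $C_L(x+I)$.

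First I would record, using that $\mathcal G_m'$ and $\mathcal L_m'$ are elementary abelian $p$-groups and hence $\mathbb F_p$-vector spaces, together with rank--nullity (the index $|G:C_G(gN)|$ equals the order of the image, and the breadth equals the rank), the identities
$$\log_p|G:C_G(gN)|=\dim[g,\mathcal G_m]-\dim\!\big([g,\mathcal G_m]\cap N\big),\qquad b(x+I)=\dim[x,\mathcal L_m]-\dim\!\big([x,\mathcal L_m]\cap I\big),$$
where $[g,\mathcal G_m]=\{[g,h]:h\in\mathcal G_m\}$ is a subgroup of $\mathcal G_m'$ (a subgroup because the class is $2$) and $[x,\mathcal L_m]=\operatorname{im}(ad_x)$ is a subspace of $\mathcal L_m'$.

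Next I would invoke the commuting square relating $\psi_1,\psi_2$ through $\psi_I$ and $\psi_R$ that is established just above this statement. Taking $x$ to be a lift of $\psi_I(gG')$, the square gives $\psi_R([g,h])=[x,y]$ whenever $y$ lifts $\psi_I(hG')$; letting $h$ range over $\mathcal G_m$ and using that $\psi_I$ is a bijection of the central quotients, the values $[x,y]$ sweep out all of $\operatorname{im}(ad_x)$, so $\psi_R([g,\mathcal G_m])=[x,\mathcal L_m]$ as subspaces. Combined with $\psi_R(N)=I$ from the definition of $\Phi$, and the fact that the vector-space isomorphism $\psi_R$ preserves dimensions and intersections, the two displayed quantities coincide, that is, $\log_p|G:C_G(gN)|=b(x+I)$ for corresponding elements.

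Finally I would observe that each quantity depends only on the coset $gG'$, respectively on the image of $x$ in $L/L'$, since $G'$ and $\mathcal L_m'$ are central; as $\psi_I$ is a bijection between these cosets, the set of conjugacy-class-size exponents of $G$ equals the set of breadths of $L$, with the central elements ($g\in N$, respectively $x\in I$) contributing the common value $0$ and matching $1=p^0$ with $0=m_0$. This yields the stated equivalence of types. The main point to handle carefully is the identification $\psi_R([g,\mathcal G_m])=[x,\mathcal L_m]$: one must verify that letting $h$ run over elements whose images $\psi_I(hG')$ exhaust $L/L'$ forces the bracket values $[x,y]$ to fill the entire image of $ad_x$, which is precisely what commutativity of the square guarantees.
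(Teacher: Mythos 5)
Your proposal is correct and follows essentially the same route as the paper: both arguments rest on writing $G\cong\mathcal G_m/N$, $\Phi(G)=\mathcal L_m/\psi_R(N)$ and invoking the commuting square relating the commutator map $\psi_1$ to the bracket map $\psi_2$ via $\psi_I$ and $\psi_R$. The only difference is bookkeeping --- the paper matches centralizers (kernels) directly under the bijection $\psi$ and compares their cardinalities, whereas you compute class-size exponents and breadths as ranks of the induced commutator/adjoint maps and match images via $\psi_R([g,\mathcal G_m])=[x,\mathcal L_m]$ --- and your write-up fills in the details that the paper compresses into ``it is easy to see.''
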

\begin{proof}
It is enough to prove that if the size of a centralizer in $G$ is $p^l$, then there exists a 
centralizer in $\phi(G)$ whose dimension is $l$ and vice-versa. Since $G$ is of the form 
$\mathcal{G}_m /N$ for some central subgroup $N$, it is easy to see that the natural bijection from 
$\mathcal{G}_m$ to $\mathcal{L}_m$ maps a centralizer of an element of $\mathcal G_m/N$, say of 
size $p^l$, to the centralizer of its image in $\mathcal{L}_m$ which is also of size $p^l$. Hence 
it is of dimension $l$ over $\mathbb{F}_p$. The converse also follow from a similar argument.
\end{proof}

This correspondence is useful in classifying the $2$-step nilpotent Lie algebras of a certain 
breadth type if the classification of the same in terms of conjugate type is known for the $2$-step 
$p$-groups. For example, classification of breadth type $(0, 3)$ nilpotent Lie algebras 
over $\mathbb{F}_p$ can be obtained from the classification of $p$-groups of conjugate type 
$(1, p^3)$ (see~\cite{ny} for the classification of $(1,p^3)$ type). 
Over more general field $\mathbb F_q$, 
Theorem~\ref{breadth_type_(0,3)_classification_odd_char_finite_fields} still needs a proof which we 
do in Section~\ref{proof_of_the_main_theorem}.

Now, consider a set $\mathcal S = \{m_0=0, m_1, \ldots, m_r \}$ consisting of positive integers. 
In~\cite{ch}, it is proved that there is a $p$-group of nilpotency class $2$ and exponent $p$ of 
type $\{1=p^0, p^{m_1}, \ldots, p^{m_r}\}$ when $p$ is odd. We are ready to answer the following 
question: Does there exist a finite dimensional Lie algebra of breadth type $(m_0, m_1, \ldots, 
m_r)$? Our correspondence gives the following: 
\begin{corollary}\label{existence}
Given a set $\mathcal S = \{0=m_0, m_1, \ldots, m_r \}$ of positive integers there exists a 
$2$-step nilpotent Lie algebra over $\mathbb{F}_p$, $p$-odd, of breadth type $(m_0, m_1, 
\ldots, m_r)$. 
\end{corollary}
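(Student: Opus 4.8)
The plan is to transport the group-theoretic existence statement cited just above from~\cite{ch} to the Lie algebra side through the correspondence $\Phi$. First I would invoke~\cite{ch} to produce, for the given set $\mathcal{S} = \{0 = m_0, m_1, \ldots, m_r\}$ and an odd prime $p$, a finite $p$-group $G$ of nilpotency class $2$ and exponent $p$ whose conjugate type is exactly $(1 = p^0, p^{m_1}, \ldots, p^{m_r})$. Being finite, $G$ is minimally generated by some number of elements, say $m+1$, and therefore $G$ belongs to the family $\mathcal{PG}_m$.

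Next I would apply the correspondence $\Phi \colon \mathcal{PG}_m \rightarrow \mathcal{NL}_m$. This produces a Lie algebra $L = \Phi(G)$ which, by the very definition of $\mathcal{NL}_m$, is an $(m+1)$-generated $2$-step nilpotent Lie algebra over $\mathbb{F}_p$. It then remains only to identify its breadth type. By the Proposition recording that $\Phi$ carries conjugate type to breadth type, the fact that $G$ has conjugate type $(1, p^{m_1}, \ldots, p^{m_r})$ yields at once that $L$ has breadth type $(0, m_1, \ldots, m_r)$, which is precisely the desired conclusion.

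There is essentially no genuine obstacle internal to this argument: the substantive content is the existence of the model $p$-group, which is supplied by~\cite{ch}, and the translation mechanism, which has already been established in this section. The only point meriting a line of care is the very first step, namely the observation that the group furnished by~\cite{ch} automatically lies in \emph{some} $\mathcal{PG}_m$. This is immediate, since it is a finite (hence finitely generated) $p$-group of class $2$ and exponent $p$, so that the full strength of $\Phi$ and of the conjugate-type/breadth-type Proposition applies without any modification. In short, the corollary is a direct consequence of the correspondence combined with the known group-theoretic realization.
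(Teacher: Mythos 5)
Your proposal is correct and is essentially identical to the paper's own argument: the paper derives this corollary immediately from the Cossey--Hawkes result in~\cite{ch} giving a class-$2$, exponent-$p$ group of conjugate type $(1, p^{m_1}, \ldots, p^{m_r})$, together with the correspondence $\Phi$ and the proposition that $\Phi$ converts conjugate type into breadth type. Your added remark that the group automatically lies in some $\mathcal{PG}_m$ is the same (trivial but worth noting) observation implicit in the paper's treatment.
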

\noindent In general, the family of Lie algebras of breadth $m$ can be divided into $2^{m-1}$ sub 
families with respect to the breadth type. What we have shown here is that each subfamily is non 
empty over the finite field $F_p$, when $p$ is odd. One can ask the above question over other fields 
too.

%%%%%%%%%%%%%%%%%%%%%%%%%%%%%%%%%%%%%%%%%%%%%%

\section{$2$-step nilpotent Camina Lie algebras}\label{camina_Lie_algebra}

In the previous section, we have seen that the free $2$-step nilpotent Lie algebra $\mathcal L_m$ 
(see Proposition~\ref{Lmtype0m}) over a field $k$ is of breadth type $(0, m)$. In this section, we 
introduce another class of $2$-step nilpotent Lie algebras of breadth type $(0, m)$, namely the 
Camina Lie algebras. Interestingly, we will see that the existence of such Lie algebra heavily depends 
on the field, unlike the other examples. This highlights one of the complications in this subject. 

A finite dimensional Lie algebra $L$ is said to be a {\bf Camina Lie algebra} if $[x, L] = L'$ for 
all $x \in L \setminus L'$ (see~\cite{ss} for more details). Not all Camina Lie algebras 
are nilpotent (see Example~\ref{non-nilpotentCamina}). Note that, if $L$ is a $2$-step 
nilpotent Camina Lie algebra then $Z(L)=L'$.  Furthermore, if $L'$ is of dimension $m$, then 
$L$ is of breadth type $(0, m)$. We have yet another generalisation of Heisenberg Lie algebra 
depending on the availability of a field extension of the given field $k$ which gives an example of 
Camina Lie algebra. 

\begin{example}[Heisenberg Lie algebra of degree $m$]\label{Heisenberg_algebra_degree_m}
Let $k$ be a field and $K$ be a field extension of degree $m$. Consider the Lie algebra $\mathfrak{h}_m = \left\{\begin{bmatrix} 0&a&c\\ 0&0&b\\ 0&0&0 \end{bmatrix} \mid a,b,c\in K \right\}$ over $k$. The Lie algebra $\mathfrak h_m$ is of dimension $3m$ with $Z(\mathfrak{h}_m) = \mathfrak{h}'_m$ of dimension $m$. Further, $[x,\mathfrak{h}_m]=\mathfrak{h}_m'$ for all $x\in \mathfrak{h}_m\setminus \mathfrak{h}'_m$. Thus, $\mathfrak{h}_m$ is a 2-step nilpotent Camina Lie algebra over $k$ of breadth type $(0,m)$. Note that when $m=1$, $\mathfrak h_1 = \mathcal H$. We call $\mathfrak h_m$ the Heisenberg Lie algebra of degree $m$ over $k$.
\end{example}

\begin{example}\label{unbounded_generators_Camina}
Consider the quotient Lie algebra $\mathfrak{h}_m/I$ where $I \lneq Z(\mathfrak{h}_m)$ is a central 
ideal of dimension $l < m$. Then, $\mathfrak{h}_m/I$ is a Camina Lie algebra of breadth type $(0, 
m-l)$. Further, $\mathfrak{h}_m/I$ is minimally generated by $2m$ elements just like 
$\mathfrak{h}_m$. 
\end{example}

Let $k$ be a field and $M_n(k)$ be the set of all $n\times n$ matrices over $k$. 
\begin{definition}
A vector subspace $V$ of $M _n(k)$ is said to be a {\bf rank-$n$ subspace} if every element of $V^{\times} = V\setminus \{0\}$ has rank $n$ (as matrices). That is, every element of $V^{\times}$ is invertible.
\end{definition}
\noindent Now we will explore the $2$-step nilpotent Camina Lie algebras  of breadth type $(0, m)$ 
in more detail and try to classify them over certain fields, for example, algebraically closed 
field, $\mathbb R$, and $\mathbb F_q$. For this, we will make use of the following theorem which is 
essentially a restatement of~\cite{ss}, Theorem 3.1. 
\begin{theorem}\label{structure_constant_Camina_algebra}
Let $L$ be a Lie algebra over $k$ minimally generated by $n$ elements. Suppose $Z(L)=L'$ of dimension $m$. Then, the following are equivalent:
\begin{enumerate}
\item $L$ is a Camina Lie algebra.
\item There exists a rank $n$-subspace $V$ of $M_n(k)$ consisting of skew-symmetric matrices with $\dim(V)=m$.
\end{enumerate}
\end{theorem}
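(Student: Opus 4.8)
The plan is to prove the equivalence of the two conditions in Theorem~\ref{structure_constant_Camina_algebra} by encoding the Lie bracket as a family of skew-symmetric matrices and then translating the Camina condition $[x,L]=L'$ for all $x\in L\setminus L'$ into the rank condition on the associated subspace of $M_n(k)$. First I would fix a minimal generating set $x_1,\dots,x_n$ of $L$ and a basis $z_1,\dots,z_m$ of $L'=Z(L)$. Since $L$ is $2$-step nilpotent, the bracket is completely determined by its values on pairs of generators, so I would write $[x_i,x_j]=\sum_{s=1}^m c^s_{ij} z_s$ with the structure constants satisfying $c^s_{ij}=-c^s_{ji}$. For each $s$ this gives a skew-symmetric matrix $A_s=(c^s_{ij})\in M_n(k)$, and I would let $V=\textup{span}_k\{A_1,\dots,A_m\}$. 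The key observation linking the two sides is that for a general element $x=\sum_i \lambda_i x_i$ (working modulo $L'$, which is harmless since $L'$ is central), the adjoint map $\textup{ad}_x$ sends $x_j\mapsto \sum_{i,s} \lambda_i c^s_{ij} z_s$, so the rank of $\textup{ad}_x$ equals the rank of the single matrix $\sum_i \lambda_i A_i$ acting on the generator coordinates.

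Next I would make the dictionary precise. The condition $[x,L]=L'$ says exactly that the image of $\textup{ad}_x$ is all of $L'$, i.e.\ has dimension $m$; equivalently the $m\times n$ matrix whose $s$-th row records the $z_s$-coefficients of $[x,x_1],\dots,[x,x_n]$ has full row rank $m$. I would argue that this full-row-rank condition for every nonzero $x\bmod L'$ is equivalent to requiring that every nonzero element $A=\sum_i\lambda_i A_i$ of $V$ be invertible, once we also verify that $\dim V=m$. For this equivalence I first need $\dim V=m$, which follows from the assumption that $z_1,\dots,z_m$ are linearly independent in $L'=[L,L]$: if a nontrivial combination $\sum_s \mu_s A_s$ vanished, the corresponding central combination $\sum_s\mu_s z_s$ would be forced to lie outside $[L,L]$, contradicting that the $z_s$ span the derived subalgebra. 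The heart of the matter is then the clean statement that $\mathrm{rank}\,\textup{ad}_x = \mathrm{rank}(\sum_i\lambda_i A_i)$, so $[x,L]=L'$ holds for all $x\notin L'$ precisely when $\sum_i\lambda_i A_i$ has rank $n$ for every $(\lambda_i)\neq 0$, which is the definition of a rank-$n$ subspace.

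For the direction $(1)\Rightarrow(2)$ I would take the Camina hypothesis, form $V$ as above, and use the rank identity to conclude every nonzero element of $V$ is invertible; the skew-symmetry is automatic from $c^s_{ij}=-c^s_{ji}$ and $\dim V=m$ from the independence of the $z_s$. Conversely, for $(2)\Rightarrow(1)$, given a rank-$n$ subspace $V$ of skew-symmetric matrices of dimension $m$, I would recover the structure constants from a basis of $V$, check the Jacobi identity holds trivially (it is automatic in the $2$-step nilpotent setting because all triple brackets vanish), and then read off that $\textup{ad}_x$ is surjective onto $L'$ for every $x\notin L'$, giving the Camina property.

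The main obstacle I anticipate is the careful bookkeeping between the abstract adjoint map $\textup{ad}_x\colon L\to L$ and the concrete $n\times n$ matrix $\sum_i\lambda_i A_i$: one must be sure that enlarging from the generators $x_1,\dots,x_n$ to a full basis $x_1,\dots,x_n,z_1,\dots,z_m$ of $L$ does not change the rank computation. Since $L'=Z(L)$ is central, every $z_s$ lies in the kernel of $\textup{ad}_x$ and contributes nothing to the rank, so the rank of $\textup{ad}_x$ on all of $L$ equals its rank restricted to the span of the generators; this reduces the computation to the single matrix $\sum_i\lambda_i A_i$ exactly. Once this reduction is stated cleanly, the equivalence becomes essentially a reformulation, and the remaining work — verifying independence of the $A_s$ and invoking that triple brackets vanish — is routine.
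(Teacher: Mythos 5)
Your setup coincides with the paper's --- the paper itself does not prove the key equivalence but quotes it from \cite{ss}, Theorem 3.1 --- but the step you call ``the heart of the matter,'' namely the identity $\mathrm{rank}\,\mathrm{ad}_x=\mathrm{rank}\bigl(\sum_i\lambda_i A_i\bigr)$, is false, and in fact not even well formed. The coefficient vector $\lambda=(\lambda_1,\dots,\lambda_n)$ of $x$ has $n$ entries, while there are only $m$ matrices $A_1,\dots,A_m$ (one for each basis vector of $L'$), so the sum $\sum_i\lambda_i A_i$ is undefined unless $m=n$. Moreover the two ranks cannot agree in general: $\mathrm{ad}_x$ has image inside $L'$, so $\mathrm{rank}\,\mathrm{ad}_x\le m$, whereas the rank-$n$ condition asks for $n\times n$ matrices of rank $n$. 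The Heisenberg algebra $\mathcal H$ already refutes the identity: there $n=2$, $m=1$, every non-central $x$ has $\mathrm{rank}\,\mathrm{ad}_x=1$, while the unique structure matrix $\left(\begin{smallmatrix}0&1\\-1&0\end{smallmatrix}\right)$ has rank $2$. Since this identity is your only bridge between the Camina condition (full row rank of the $m\times n$ matrix $M(\lambda)$ that you correctly introduce) and invertibility of the nonzero elements of $V$, the equivalence $(1)\Leftrightarrow(2)$ is never actually proved; note also that the linear combinations relevant to the two sides are parametrized by different spaces, $\lambda\in k^n$ versus $\mu\in k^m$, which your notation conflates throughout.

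The missing bridge is a short left-kernel duality. The $s$-th row of $M(\lambda)$ is the row vector ${}^t\lambda A_s$, so a nontrivial dependence $\sum_s\mu_s\,({}^t\lambda A_s)=0$ among the rows says exactly that ${}^t\lambda\bigl(\sum_s\mu_s A_s\bigr)=0$, i.e.\ that $\lambda$ lies in the left kernel of $\sum_s\mu_s A_s$. Hence there exists $\lambda\neq 0$ with $\mathrm{rank}\,M(\lambda)<m$ if and only if there exists $\mu\neq 0$ with $\sum_s\mu_s A_s$ singular; negating both sides gives precisely the theorem: $L$ is Camina if and only if every nonzero element of $V=\mathrm{span}\{A_1,\dots,A_m\}$ is invertible. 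In the forward direction this also yields $\dim V=m$ for free, since a vanishing nontrivial combination would in particular be singular. (Your separate independence argument has the right idea --- one should use that the brackets $[x_i,x_j]$ span $L'$ --- but the assertion that $\sum_s\mu_s z_s$ would ``lie outside $[L,L]$'' is not what happens; rather, the linear functional $z_s\mapsto\mu_s$ kills every bracket, hence kills $L'$, forcing $\mu=0$.) With this duality inserted, the rest of your outline --- skew-symmetry, discarding the central part of $x$, and the converse construction of a Camina Lie algebra from a given rank-$n$ subspace --- does go through, and is in substance the argument of \cite{ss} that the paper invokes.
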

\begin{proof} 
Let $L$ be a Lie algebra over $k$. Suppose $Z(L)=L'$. Let $\{x_1, x_2, \ldots, x_n\}$ be a minimal generating set of $L$, and $\{y_1, y_2, \ldots, y_m\}$ be a basis of $L'$. Suppose that $X_r = (\gamma_{jir})$, the $n\times n$ matrices for $1\leq r\leq m$, obtained by writing $[x_i, x_j] = \gamma_{ij1}y_1+ \gamma_{ij2} y_2 + \cdots + \gamma_{ijn}y_n$, for $1\leq i, j \leq n$, where $\gamma_{ijr} \in k$. Then, from~\cite{ss} Theorem 3.1, the following are equivalent:	
\begin{enumerate}[label=(\alph*)]
\item $L$ is a Camina Lie algebra.
\item If $\xi_1,\xi_2, \ldots, \xi_m \in k$ are such that $\xi_1 X_1+ \xi_2X_2 + \cdots + \xi_mX_m$ is singular, then $\xi_1 = \xi_2 = \cdots = \xi_m=0$.
\end{enumerate}
Note that the matrices $X_1, X_2, \ldots, X_m$ are skew-symmetric, and the restatement of the second 
part is our statement.
\end{proof}
\noindent In the view of the above theorem, the problem of the existence of $n$-generated $2$-step 
nilpotent Camina Lie algebra of breadth type $(0,m)$ over $k$ is equivalent to producing a 
collection of (structure constants) matrices $X_1, \ldots, X_m$ over $k$ which are (a) 
skew-symmetric of size $n\times n$, and (b) whose any non-trivial linear combination is 
non-singular. Interestingly, this linear algebra problem is important in its own right, for example 
determining Radon-Hurwitz numbers (see~\cite{ra, hu}), and has received quite a bit of attention 
with several applications to other areas of mathematics. 

\subsection{Rank $n$-subspaces of skew-symmetric matrices and Camina Lie algebras}
For $n\in \mathbb{N}$ and a field $k$, let $k(n)$ denote the maximum possible dimension of a rank-$n$ subspace of $M_n(k)$. In other words, $k(n)$ is the maximum number of invertible $n\times n$ matrices $\{A_1, A_2, \ldots, A_{k(n)}\}$ over $k$ such that any non-trivial linear combination of these matrices is non-singular. The set of skew-symmetric matrices of $M_n(k)$, denoted as $SKS_n(k)$, form a vector space of dimension $\frac{n(n-1)}{2}$. Let $k_{sks}(n)$ denote the maximum dimension of a rank-$n$ subspace of $SKS_n(k)$. Let us first determine $k(n)$ and $k_{sks}(n)$ for an algebraically closed field.

\begin{proposition}\label{Radon-Hurwitz-algebraically-closed-field}
Let $k$ be an algebraically closed field. Then, $k(n)=1$. Moreover, 
\[k_{sks}(n) = \begin{cases} 0 & \text{when $n$ is odd}\\ 1 & \text{when $n$ is even.} \end{cases}
\]
\end{proposition}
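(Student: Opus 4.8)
The plan is to establish the three assertions separately, reducing the two skew-symmetric statements to the general bound $k(n)=1$.

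First I would prove $k(n)=1$. The line $\langle I_n\rangle$ spanned by the identity is a rank-$n$ subspace, so $k(n)\geq 1$. For the reverse inequality, suppose $V\subseteq M_n(k)$ is a rank-$n$ subspace with $\dim V\geq 2$, and choose linearly independent $A,B\in V$. Since $A$ is a nonzero element of $V$ it is invertible, so $A^{-1}B$ is defined; as $k$ is algebraically closed its characteristic polynomial has a root $\lambda\in k$, i.e.\ $A^{-1}B$ has an eigenvalue $\lambda$. Then $\det(B-\lambda A)=\det(A)\det(A^{-1}B-\lambda I_n)=0$, so $B-\lambda A$ is singular. But $B-\lambda A\in V$ and $B-\lambda A\neq 0$ (as $A,B$ are independent), contradicting that every nonzero element of $V$ is invertible. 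Hence $\dim V\leq 1$, giving $k(n)=1$.

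Next, for odd $n$, I would argue that $SKS_n(k)$ contains no nonzero invertible matrix at all. Every alternating matrix can be brought, by a congruence $A\mapsto P^tAP$ with $P$ invertible, to a block-diagonal normal form with $2\times 2$ blocks $\left(\begin{smallmatrix} 0 & 1 \\ -1 & 0\end{smallmatrix}\right)$ and a complementary zero block; since congruence preserves the rank, the rank of any alternating matrix is even. For odd $n$ this forces the rank to be strictly less than $n$, so every element of $SKS_n(k)$ is singular and a rank-$n$ subspace must be $\{0\}$. Therefore $k_{sks}(n)=0$. For even $n$, the upper bound is immediate: a rank-$n$ subspace of $SKS_n(k)$ is in particular a rank-$n$ subspace of $M_n(k)$, so $k_{sks}(n)\leq k(n)=1$ by the first step. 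For the matching lower bound I would exhibit one invertible alternating matrix, for instance the symplectic form $J=\left(\begin{smallmatrix} 0 & I_{n/2} \\ -I_{n/2} & 0\end{smallmatrix}\right)$ (equivalently, the block-diagonal matrix with $n/2$ copies of $\left(\begin{smallmatrix} 0 & 1 \\ -1 & 0\end{smallmatrix}\right)$); then $\langle J\rangle$ is a one-dimensional rank-$n$ subspace, so $k_{sks}(n)=1$.

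The routine parts are the two explicit examples and the eigenvalue computation; the one point that needs genuine care is the claim that alternating matrices always have even rank, which drives the odd case and holds over any field only once one fixes the convention that $SKS_n(k)$ consists of matrices with zero diagonal and $A^t=-A$ (forced by the stated dimension $\frac{n(n-1)}{2}$, and relevant in characteristic $2$, where this is strictly stronger than mere skew-symmetry). I would therefore state this convention explicitly before running the rank argument.
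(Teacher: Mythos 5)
Your proposal is correct and takes essentially the same route as the paper: the heart of both arguments is that for two independent elements $A,B$ of a rank-$n$ subspace, an eigenvalue $\lambda$ of $A^{-1}B$ (which exists since $k$ is algebraically closed) yields the nonzero singular element $B-\lambda A$, forcing $k(n)=1$, after which the skew-symmetric statements follow from parity considerations. Your additions --- the explicit lower-bound examples $\langle I_n\rangle$ and the symplectic form $J$, the determinant identity that avoids any fuss about whether $\lambda$ is nonzero, and the observation that in characteristic $2$ one must read $SKS_n(k)$ as alternating (zero-diagonal) matrices so that the even-rank argument applies --- are refinements of points the paper leaves implicit (it merely asserts that a skew-symmetric matrix of odd size is singular), not a different method.
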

\begin{proof}
Consider a rank-$n$ subspace $V$ of $M_n(k)$ such that $\dim(V)=m\geq 2$. Suppose $V$ is 
spanned by $X_1,X_2,\ldots,X_m$. Let us take the first two matrices $X_1,X_2$. We claim that there 
exists $\alpha, \beta$ both non-zero such that $\alpha X_1+\beta X_2$ is singular. If not, then 
$X_1^{-1}(\alpha X_1+\beta X_2)$ is non-singular for all $\alpha,\beta \neq 0$. This implies 
$\alpha\beta^{-1}+X_1^{-1}X_2$ is non-singular for all $\alpha,\beta \neq 0$. However, by using a 
non-zero eigen value of $X_1^{-1}X_2$, which exists as $k$ is algebraically closed, we get a 
contradiction. Thus, $\dim(V)=1$. Further, we note that a skew-symmetric matrix is singular 
when $n$ is odd. This completes the proof.
\end{proof}
\noindent As a consequence we have the following. 
\begin{theorem}\label{complex_Camina_Lie_algebra}
Let $L$ be a $2$-step nilpotent Camina Lie algebra over an algebraically closed field $k$. Then, $\dim(L')=1$. In particular, $L$ has breadth type $(0,1)$. 
\end{theorem}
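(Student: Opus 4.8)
The plan is to read the conclusion off directly from the two preceding results, which together already contain essentially all the needed content. Suppose $L$ is a $2$-step nilpotent Camina Lie algebra over an algebraically closed field $k$. As recorded just before the definition, any $2$-step nilpotent Camina Lie algebra satisfies $Z(L) = L'$, so $L$ falls squarely under the hypotheses of Theorem~\ref{structure_constant_Camina_algebra}. Let $n$ denote the size of a minimal generating set of $L$, and set $m = \dim(L')$.

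First I would apply Theorem~\ref{structure_constant_Camina_algebra}: the Camina condition is equivalent to the existence of a rank-$n$ subspace $V$ of $M_n(k)$ consisting of skew-symmetric matrices with $\dim(V) = m$. In the terminology of the previous subsection, this exhibits a rank-$n$ subspace of $SKS_n(k)$ of dimension $m$, and hence forces $m \leq k_{sks}(n)$. Next I would invoke Proposition~\ref{Radon-Hurwitz-algebraically-closed-field}, which gives $k_{sks}(n) \leq 1$ over an algebraically closed field (equal to $0$ for $n$ odd and to $1$ for $n$ even). Combining the two inequalities yields $m \leq 1$. Since $L$ is genuinely $2$-step nilpotent, its derived subalgebra is nonzero, so $m \geq 1$; therefore $m = \dim(L') = 1$.

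Finally, the \emph{in particular} clause follows from the observation recorded just after the definition of Camina Lie algebra: a $2$-step nilpotent Camina Lie algebra with $\dim(L') = m$ is of breadth type $(0,m)$, so in our situation $L$ has breadth type $(0,1)$.

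There is no real obstacle here — the statement is a direct corollary, and the entire difficulty has already been absorbed into the eigenvalue argument of Proposition~\ref{Radon-Hurwitz-algebraically-closed-field}, which uses algebraic closure to produce a nonzero eigenvalue of $X_1^{-1}X_2$ and thereby rule out rank-$n$ subspaces of dimension $\geq 2$. The only point requiring a moment's care is confirming $\dim(L') \geq 1$, i.e.\ that $L$ is nonabelian, which is immediate from $L$ being $2$-step nilpotent.
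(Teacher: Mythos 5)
Your proof is correct and follows exactly the route the paper intends: the paper states this theorem as an immediate consequence of Proposition~\ref{Radon-Hurwitz-algebraically-closed-field} via Theorem~\ref{structure_constant_Camina_algebra}, which is precisely the chain $m \leq k_{sks}(n) \leq 1$ together with $m \geq 1$ that you spell out. Your version is, if anything, slightly more careful than the paper's, since it makes explicit the nonabelian-ness step and the breadth-type observation for the \emph{in particular} clause.
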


We have the following result that we will need later.
\begin{proposition}\label{skew-symmetric construction}
Let $V$ be a rank-$n$ subspace of $M_n(k)$ for a field $k$. Suppose $\dim(V)=r$. Then, there exists rank-$2n$ subspace $W$ of $SKS_{2n}(k)$ with $\dim(W)=r$.
\end{proposition}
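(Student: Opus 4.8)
The plan is to realize $W$ as the image of $V$ under an explicit $k$-linear map that embeds $M_n(k)$ into $SKS_{2n}(k)$ while preserving invertibility. The natural candidate is the block construction
\[
T\colon M_n(k)\longrightarrow M_{2n}(k),\qquad T(A)=\begin{bmatrix} 0 & A\\ -A^{t} & 0\end{bmatrix}.
\]
First I would record the formal properties of $T$. It is clearly $k$-linear and injective. Moreover $T(A)^{t}=\begin{bmatrix} 0 & -A\\ A^{t} & 0\end{bmatrix}=-T(A)$, so $T(A)$ is skew-symmetric; since both diagonal blocks vanish, every diagonal entry of $T(A)$ is zero, and hence $T(A)$ lies in $SKS_{2n}(k)$ in every characteristic. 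Thus $T$ maps $M_n(k)$ into $SKS_{2n}(k)$.

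The crucial step is to check that $T$ sends invertible matrices to invertible matrices, which I would do by a direct determinant computation. Using the factorization
\[
\begin{bmatrix} 0 & A\\ -A^{t} & 0\end{bmatrix}=\begin{bmatrix} 0 & I_n\\ I_n & 0\end{bmatrix}\begin{bmatrix} -A^{t} & 0\\ 0 & A\end{bmatrix},
\]
together with $\det\begin{bmatrix} 0 & I_n\\ I_n & 0\end{bmatrix}=(-1)^{n}$ and $\det(-A^{t})=(-1)^{n}\det(A)$, I obtain $\det T(A)=(-1)^{n}\cdot(-1)^{n}\det(A)\cdot\det(A)=\det(A)^{2}$. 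Consequently $T(A)$ is invertible if and only if $A$ is, over any field.

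Finally I would set $W=T(V)$. Since $T$ is linear and injective, its restriction to $V$ is an isomorphism onto $W$, so $\dim W=\dim V=r$ and $W\subseteq SKS_{2n}(k)$. Given any nonzero $X\in W$, write $X=T(A)$ with $A\in V$ nonzero; as $V$ is a rank-$n$ subspace, $A$ is invertible, whence $\det X=\det(A)^{2}\neq 0$ and $X$ has rank $2n$. Therefore $W$ is a rank-$2n$ subspace of $SKS_{2n}(k)$ of dimension $r$, as required. I do not anticipate a genuine obstacle here: the only non-formal ingredient is spotting the factorization above, and the argument is characteristic-free because $\det(A)\neq 0$ forces $\det(A)^{2}\neq 0$ over any field.
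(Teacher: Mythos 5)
Your proof is correct and follows essentially the same route as the paper: the paper also forms the block anti-diagonal matrices $\begin{bmatrix} 0 & -X_i^{t}\\ X_i & 0\end{bmatrix}$ from a basis of $V$ and asserts the result. The only difference is that you spell out the determinant verification (and the zero-diagonal remark needed in characteristic $2$) that the paper dismisses as ``clearly,'' which is a welcome addition but not a different argument.
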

\begin{proof} Suppose $V=\text{span}\{X_1, X_2, \ldots, X_r\}$. For $1 \leq i \leq r$, we define $Y_i= \begin{bmatrix} 0 & -X_i\tr \\ X_i & 0\end{bmatrix}$ in $M_{2n}(k)$.  Clearly, $Y_1,Y_2,\ldots Y_r$ span a $r$-dimensional rank-$2n$ subspace of $SKS_{2n}(k)$. This completes the proof.
\end{proof}

Now we determine $\mathbb{R}(n)$ and $\mathbb{R}_{sks}(n)$. The number $\mathbb{R}(n)$ is called Radon-Hurwitz number as they were classical treated by Radon in~\cite{ra} and Hurwitz in~\cite{hu} independently. We note down the formula for $\mathbb{R}(n)$ given in~\cite{alp}.
\begin{theorem}\label{Radon-Hurwitz_real_fields}
Let $n\in \mathbb{N}$. Write $n=2^{4a+b}.c$ where $c$ is odd and $0\leq b\leq 3$. Then, 
$\mathbb{R}(n)=8a+2^b.$
\end{theorem}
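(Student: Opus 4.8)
The plan is to reduce the computation of $\mathbb{R}(n)$, the maximal dimension of a rank-$n$ subspace of $M_n(\mathbb{R})$, to the representation theory of real Clifford algebras, following the circle of ideas of Radon, Hurwitz, and Adams--Lax--Phillips \cite{alp}. By definition $\mathbb{R}(n)$ is the largest $m$ for which there exist matrices $A_1,\dots,A_m\in M_n(\mathbb{R})$ whose every nonzero real linear combination is invertible. First I would normalize such a family. Since each $A_i$ is invertible, after replacing $A_i$ by $A_m^{-1}A_i$ we may assume $A_m=I$; then the remaining $A_1,\dots,A_{m-1}$ must satisfy that $\xi_0 I+\sum_{i<m}\xi_i A_i$ is invertible for all $\xi\neq 0$. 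The crucial normalization step is to upgrade this to the Hurwitz--Radon relations $A_i^2=-I$ and $A_iA_j+A_jA_i=0$ for $i\neq j$; this is achieved by averaging to produce an inner product for which all the $A_i$ are orthogonal and then passing to their skew-symmetric parts, a standard but delicate argument.

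Once the family is in Hurwitz--Radon form, the connection to Clifford algebras is transparent. Indeed, $s:=m-1$ anticommuting matrices with $A_i^2=-I$ are precisely an $\mathbb{R}^n$-module structure over the Clifford algebra $C\ell_{0,s}$ generated by $s$ square roots of $-1$. Conversely, given such a module, the elementary identity
\[ \Big(\xi_0 I+\sum_{i=1}^{s}\xi_i A_i\Big)\Big(\xi_0 I-\sum_{i=1}^{s}\xi_i A_i\Big)=\Big(\xi_0^2+\sum_{i=1}^{s}\xi_i^2\Big)I \]
shows that every nonzero combination of $I,A_1,\dots,A_s$ is invertible, so it yields a rank-$n$ subspace of dimension $s+1$. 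Hence $\mathbb{R}(n)=1+s_{\max}(n)$, where $s_{\max}(n)$ is the largest $s$ such that $C\ell_{0,s}$ admits a real representation on $\mathbb{R}^n$ (equivalently, such that the minimal dimension of an irreducible $C\ell_{0,s}$-module divides $n$).

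The final step is an explicit computation using the classification of real Clifford algebras, which is $8$-periodic by Bott periodicity: $C\ell_{0,s+8}\cong C\ell_{0,s}\otimes_{\mathbb{R}} M_{16}(\mathbb{R})$, with base cases $C\ell_{0,0},\dots,C\ell_{0,7}$ being matrix algebras over $\mathbb{R},\mathbb{C},\mathbb{H}$. Reading off the minimal irreducible module dimensions for the eight residues of $s$ modulo $8$ and matching them against the $2$-adic valuation $v_2(n)=4a+b$ gives exactly $s_{\max}(n)+1=8a+2^b$, which is the asserted value of $\mathbb{R}(n)$. I would organize this as a short table over $s\bmod 8$ and then propagate by the periodicity tensor factor, which multiplies module dimensions by $16=2^4$ and increments the exponent bookkeeping by $8$.

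The main obstacle is the normalization in the first paragraph, that is, the \emph{upper bound}: while the Clifford construction makes the lower bound $\mathbb{R}(n)\geq 8a+2^b$ essentially automatic, showing that no rank-$n$ subspace can exceed this dimension requires proving that an arbitrary such subspace can be brought into Hurwitz--Radon form, so that the representation-theoretic ceiling applies. This is precisely the content that makes the Adams--Lax--Phillips result nontrivial; the symmetrization must be carried out carefully so that the anticommutation relations hold exactly and not merely up to lower-order corrections. Everything after this reduction is bookkeeping with the (well-documented) Clifford algebra tables.
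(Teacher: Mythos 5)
Your lower bound is correct and standard: the Clifford module construction, the identity you display, and the bookkeeping with the $8$-periodic table of real Clifford algebras do produce a rank-$n$ subspace of $M_n(\mathbb{R})$ of dimension $8a+2^b$, matching the stated formula. Note first, though, that the paper itself offers no proof of this theorem: it is quoted as a classical result of Radon \cite{ra} and Hurwitz \cite{hu} in the sharp form due to Adams, Lax and Phillips \cite{alp}, so the relevant comparison is between your sketch and the actual argument of \cite{alp}.

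The genuine gap is the upper bound, exactly the step you yourself flag as the ``main obstacle.'' The normalization you propose --- given $A_1,\dots,A_m$ spanning a rank-$n$ subspace with $A_m=I$, ``average to produce an inner product for which all the $A_i$ are orthogonal and then pass to their skew-symmetric parts'' so as to obtain the exact relations $A_i^2=-I$ and $A_iA_j+A_jA_i=0$ --- is not a standard argument, and no argument of this kind is known. Such a polarization step is available for the classical Hurwitz composition problem, where a normed bilinear pairing satisfies $|f(x,y)|=|x|\,|y|$ and this quadratic identity polarizes to $A_i^{t}A_j+A_j^{t}A_i=2\delta_{ij}I$; but a rank-$n$ subspace gives only invertibility of nonzero linear combinations, a weaker and non-quadratic condition with nothing to polarize, and there is no group present over which to average. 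This is precisely why \cite{alp} proceed in a completely different way: after normalizing $A_m=I$, for each $x\in S^{n-1}$ they project $A_1x,\dots,A_{m-1}x$ onto the orthogonal complement of $x$; nonsingularity of every combination $\sum_i\xi_iA_i-\lambda I$ makes these $m-1$ pointwise linearly independent tangent vector fields on $S^{n-1}$, and then J.~F.~Adams' theorem on vector fields on spheres (a K-theoretic result, not an algebraic one) yields $m-1\le \mathbb{R}(n)-1$. A completed version of your normalization step would amount to a purely algebraic proof of the linear case of Adams' theorem, which is not known; describing it as ``standard but delicate'' conceals the entire depth of the result. As written, your proposal establishes only the inequality $\mathbb{R}(n)\ge 8a+2^b$.
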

\noindent Note that, when $n$ is odd $\mathbb{R}(n)=1$. This can be also deduced by the same 
elementary arguments made for algebraically closed fields as in 
Proposition~\ref{Radon-Hurwitz-algebraically-closed-field}. The table for $\mathbb{R}(n)$ for small 
values of $n$ (even) is as follows:
\begin{table}[ht]
	\centering
	\begin{tabular}{|c|c|c|c|c|c|c|c|c|c|c|}
		\hline
		$n$ &  2 & 4 & 6 & 8 & 10 & 12 & 14 & 16 & 18 & 20\\
		\hline
		$\mathbb{R}(n)$ & 2 & 4 & 2 & 8 & 2 & 4 & 2 & 9 & 2 & 4\\
		\hline
	\end{tabular}
	\vspace{0.2 cm}
	\caption{Radon-Hurwitz sequence for some small values of $n$}
	\label{table: Radon-Hurwitz numbers}
\end{table}

\begin{example} Consider the matrices $ A= \begin{bmatrix} 1 & 0\\ 0 & 1 \end{bmatrix}$ and $B=\begin{bmatrix} 0 & -1\\ 1 & 0 \end{bmatrix}$ in $M_2(\mathbb R)$. Then, 
$ \alpha A+\beta B = \begin{bmatrix} \alpha & -\beta\\ \beta & \alpha \end{bmatrix}$. Then, $\det(\alpha A +\beta B)=\alpha^2+\beta^2$. Thus, $\alpha A +\beta B$ is non-singular when $(\alpha, \beta) \neq (0,0)$. This exhibits a $2$-dimensional rank-$2$ subspace of $M_2(\mathbb{R})$ as we have $\mathbb R(2)=2$. Further, since skew-symmetric matrices are of dimension $1$, we get $\mathbb{R}_{sks}(2)=1$.
\end{example}

From these results and Proposition~\ref{skew-symmetric construction}, it is now easy to construct 
$2$-step nilpotent real Camina Lie algebra of breadth type $(0,m)$. We demonstrate this using an 
example.
\begin{example}\label{Camina_real_eight_generated}
Consider a rank-$4$ subspace of $M_4(\mathbb{R})$ of dimension $3$. This is possible because of Theorem~\ref{Radon-Hurwitz_real_fields}. Now, by Proposition~\ref{skew-symmetric construction}, we can construct a rank-$8$ subspace of $SKS_8(\mathbb{R})$ of dimension $3$. Thus, in the view of Theorem~\ref{structure_constant_Camina_algebra}, we can produce a real $2$-step nilpotent Camina Lie algebra generated by $8$ elements and of breadth type $(0,3)$. 
\end{example} 

The following result from~\cite{ay} Theorem 3, connects the numbers $\mathbb{R}_{sks}(n)$ 
and $\mathbb{R}(n)$, in general.
\begin{theorem}[]\label{Radon-Hurwitz_skew_symmetric}
We have, $\mathbb{R}_{sks}(n)=\mathbb{R}(n)-1$.
\end{theorem}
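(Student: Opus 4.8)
```latex
\textbf{Proof proposal for Theorem~\ref{Radon-Hurwitz_skew_symmetric}.} The plan is to prove the two inequalities $\mathbb{R}_{sks}(n) \leq \mathbb{R}(n)-1$ and $\mathbb{R}_{sks}(n) \geq \mathbb{R}(n)-1$ separately, relating an optimal family of skew-symmetric rank-$n$ matrices to an optimal Radon-Hurwitz family and vice versa. Since $\mathbb{R}(n)=1$ when $n$ is odd (as noted after Theorem~\ref{Radon-Hurwitz_real_fields}) and a skew-symmetric matrix of odd size is always singular, the identity $\mathbb{R}_{sks}(n)=0=\mathbb{R}(n)-1$ holds trivially in the odd case, so I would immediately reduce to $n$ even and work there throughout.

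For the inequality $\mathbb{R}_{sks}(n) \leq \mathbb{R}(n)-1$, the key idea is that a rank-$n$ subspace $W$ of $SKS_n(\mathbb{R})$ of dimension $r$ gives rise to a rank-$n$ subspace of $M_n(\mathbb{R})$ of dimension $r+1$. The natural construction is to fix one nonzero $Y_0 \in W$; since $Y_0$ is invertible, the matrices $Y_0^{-1}Y$ for $Y \in W$ span a subspace that (after normalizing by $Y_0$) one hopes to augment by the identity matrix to produce a rank-$n$ family of dimension $r+1$. First I would verify that $\{Y_0^{-1}Y : Y \in W\}$ together with $I$ spans a rank-$n$ subspace: a nontrivial singular combination $\lambda I + Y_0^{-1}Z$ (with $Z \in W$) would force $-\lambda$ to be a real eigenvalue of the skew-conjugated operator $Y_0^{-1}Z$, and I would use the skew-symmetry of $Y_0$ and $Z$ to show such a real eigenvalue cannot occur unless the whole combination is trivial. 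This yields $r+1 \leq \mathbb{R}(n)$.

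For the reverse inequality $\mathbb{R}_{sks}(n) \geq \mathbb{R}(n)-1$, I would start from an optimal rank-$n$ Radon-Hurwitz family $\{A_1=I, A_2,\ldots,A_{\mathbb{R}(n)}\}$ in $M_n(\mathbb{R})$ (one may always normalize so that $A_1=I$ after left-multiplying by $A_1^{-1}$, which preserves the rank-$n$ property). The standard Radon-Hurwitz machinery allows one to replace such a family by an orthonormal-type family in which $A_2,\ldots,A_{\mathbb{R}(n)}$ can be taken to be skew-symmetric and to satisfy the Hurwitz-Radon anticommutation relations $A_i A_j + A_j A_i = -2\delta_{ij}I$. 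The resulting $\mathbb{R}(n)-1$ skew-symmetric matrices span a rank-$n$ subspace of $SKS_n(\mathbb{R})$, because any nontrivial combination $\sum_{i\geq 2}\mu_i A_i$ squares to $-(\sum \mu_i^2)I$ by the anticommutation relations and is therefore invertible. This gives $\mathbb{R}(n)-1 \leq \mathbb{R}_{sks}(n)$, and combining the two bounds finishes the proof.

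The main obstacle I anticipate is the reverse inequality, specifically the step of normalizing an arbitrary optimal Radon-Hurwitz family into skew-symmetric, anticommuting form. Guaranteeing a family $\{A_i\}$ can be brought to satisfy $A_i A_j + A_j A_i = -2\delta_{ij}I$ with $A_i$ skew-symmetric relies on the representation theory of Clifford algebras underlying the Radon-Hurwitz theorem, and one must check this reduction works over $\mathbb{R}$ without loss of dimension; this is where I would invoke the cited structural result of~\cite{ay} rather than reprove the Clifford-algebraic normalization from scratch.
```
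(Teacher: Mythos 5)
The paper itself offers no proof of Theorem~\ref{Radon-Hurwitz_skew_symmetric}: it is quoted directly from \cite{ay}, Theorem 3. So your attempt has to be judged against the standard argument rather than against anything in the paper. Your overall plan (two inequalities, disposing of odd $n$ first) is the right one, but the upper bound as written has a genuine gap, namely a dimension-counting error. Since $Y_0 \in W$, the identity $I = Y_0^{-1}Y_0$ already lies in $Y_0^{-1}W = \{Y_0^{-1}Y : Y \in W\}$, so adjoining $I$ adds nothing: $\mathrm{span}\left(\{I\} \cup Y_0^{-1}W\right) = Y_0^{-1}W$ has dimension exactly $r$, not $r+1$. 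Your construction therefore only yields $\mathbb{R}(n) \geq r$, i.e. $\mathbb{R}_{sks}(n) \leq \mathbb{R}(n)$, one short of what is needed. (Your spectral claim is also false as stated: $Y_0^{-1}Z$ can have nonzero real eigenvalues, e.g. $Z = Y_0$ gives $Y_0^{-1}Z = I$; what is true is that $\lambda I + Y_0^{-1}Z = Y_0^{-1}(\lambda Y_0 + Z)$ is singular only if $\lambda Y_0 + Z$ is singular, hence only if $\lambda Y_0 + Z = 0$ by the rank-$n$ property of $W$.) The repair is to drop the normalization altogether: take $V = \mathbb{R}I + W \subseteq M_n(\mathbb{R})$. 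Since $I$ is symmetric and nonzero, $I \notin W$, so $\dim V = r+1$; and $V$ is a rank-$n$ subspace because for $\lambda \neq 0$ the matrix $\lambda I + Z$ is invertible (a real skew-symmetric $Z$ has purely imaginary spectrum), while for $\lambda = 0$ any nonzero $Z \in W$ is invertible by hypothesis. This correctly gives $\mathbb{R}_{sks}(n) + 1 \leq \mathbb{R}(n)$.

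For the reverse inequality your mechanism is correct: an anticommuting family of skew-symmetric matrices squaring to $-I$ spans a rank-$n$ subspace of $SKS_n(\mathbb{R})$, by your squaring identity. But your proposed source is circular as stated: \cite{ay}, Theorem 3 \emph{is} the statement being proved, so it cannot be invoked for this step. What you should cite is the classical constructive half of the Radon--Hurwitz theorem (\cite{ra}, \cite{hu}; it is also the lower-bound construction used in \cite{alp}): there exist $\mathbb{R}(n)-1$ real skew-symmetric matrices $A_i$ with $A_i^2 = -I$ and $A_iA_j = -A_jA_i$ for $i \neq j$. Note also that you do not need to, and in general cannot expect to, ``normalize'' an \emph{arbitrary} optimal rank-$n$ family into this anticommuting form; the existence of a single Hurwitz--Radon family of maximal size suffices, and that is exactly what the classical construction provides.
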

\noindent Let us construct a $2$-step nilpotent real Camina Lie algebra with $4$ generators and breadth type $(0,3)$.
\begin{example}\label{real_Camina_four_generated}
By Theorem~\ref{Radon-Hurwitz_skew_symmetric}, we can construct a $3$-dimensional rank-$4$ subspace of $SKS_4(\mathbb{R})$. Take the following matrices
$$ X_1= \left(\begin{array}{@{}c|c@{}} \bigzero & 	\begin{matrix} 1 & 0 \\ 0 & 1 \end{matrix} \\ \hline \begin{matrix} -1 & 0 \\ 0 & -1 \end{matrix} & \bigzero \end{array}\right), \; X_2=\left(\begin{array}{@{}c|c@{}} \bigzero & 	\begin{matrix} 0 & -1 \\ 1 & 0 \end{matrix} \\ \hline \begin{matrix} 0 & -1 \\ 1 & 0 \end{matrix} & \bigzero \end{array}\right), \; 
X_3= \left(\begin{array}{@{}c|c@{}} \begin{matrix} 0 & -1 \\ 1 & 0 \end{matrix}  & 	\bigzero \\ \hline \bigzero &  \begin{matrix} 0 & 1 \\ -1 & 0 \end{matrix}   \end{array}\right).
$$
We can verify that $X_1^2 = X_2^2 = X_3^2= -I$. Using this we get, $(\alpha X_1 + \beta X_2 +\gamma X_3)(\alpha X_1 + \beta X_2 +\gamma X_3)\tr = -(\alpha^2+\beta^2+\gamma^2) I$. Which gives  $\det((\alpha X_1 + \beta X_2 +\gamma X_3))^2=(\alpha^2+\beta^2+\gamma^2)^4$, i.e., $\det((\alpha X_1 + \beta X_2 +\gamma X_3)) = (\alpha^2+\beta^2+\gamma^2)^2$.  
Therefore, $\text{span}\{X_1,X_2,X_3\}$ is a rank-$4$ subspace as desired. Thus, from Theorem~\ref{structure_constant_Camina_algebra} we can construct a 2-step nilpotent real Camina Lie algebra of breadth type $(0,3)$. This Lie algebra is minimally generated by $4$ elements. This Lie algebra can be described using Theorem~\ref{structure_constant_Camina_algebra} as follows:
$$L=\langle  x_1,x_2,x_3,x_4 \mid [x_1,\; x_2] + [x_3,\;x_4] = [x_1,\; x_3] - [x_2,\;x_4] =  [x_1,\; x_4] + [x_2,\;x_3] = 0 \rangle.$$
\end{example}

Now we come to the case of finite fields $k = \mathbb F_q$. For a finite field, $k_{sks}(n)$ can 
be 
determined using the Chevalley-Warning theorem. When $n$ is odd $k_{sks}(n)=0$. When $n$ is even it 
follows from~\cite{ka} Theorem 3.8, that $k_{sks}(n) \leq n/2$. As a consequence, we 
obtain the following for Camina Lie algebras over finite fields.
\begin{theorem}\label{finite_field_Camina_Lie_algebra}
Let $L$ be a $2$-step nilpotent Camina Lie algebra over $\mathbb F_q$ minimally generated by $n$ elements, and of breadth type $(0,m)$. Then, $n\geq 2m$. 
\end{theorem}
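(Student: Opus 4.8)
The plan is to reduce the statement directly to the linear-algebra bounds on $k_{sks}(n)$ that were collected just before the theorem, using Theorem~\ref{structure_constant_Camina_algebra} as the bridge. First I would record the structural consequences of the hypotheses: since $L$ is a $2$-step nilpotent Camina Lie algebra we have $Z(L)=L'$, and being of breadth type $(0,m)$ means precisely $\dim L'=m$. Applying Theorem~\ref{structure_constant_Camina_algebra} over $k=\mathbb{F}_q$ to our $L$ (minimally generated by $n$ elements, with $Z(L)=L'$ of dimension $m$), the Camina condition is equivalent to the existence of a rank-$n$ subspace $V\subseteq M_n(\mathbb{F}_q)$ consisting of skew-symmetric matrices with $\dim V=m$. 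In other words, $V$ is a rank-$n$ subspace of $SKS_n(\mathbb{F}_q)$ of dimension $m$, and therefore $k_{sks}(n)\geq m$.

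Next I would split on the parity of $n$. If $n$ is odd, then every skew-symmetric matrix of size $n$ is singular, so $k_{sks}(n)=0$ as noted in the text; combined with $m\leq k_{sks}(n)$ this forces $m=0$. For a genuine Camina Lie algebra one has $m\geq 1$, so this case does not occur, and $n$ must be even (and in the degenerate abelian case $m=0$ the inequality $n\geq 2m=0$ holds trivially anyway). If $n$ is even, then by~\cite{ka} Theorem 3.8 (which is obtained via the Chevalley--Warning theorem) we have the bound $k_{sks}(n)\leq n/2$. Chaining the two inequalities gives
\[
m\ \leq\ k_{sks}(n)\ \leq\ \frac{n}{2},
\]
and hence $n\geq 2m$, as claimed.

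I do not expect any genuine obstacle here: the entire nontrivial content is external, residing in the inequality $k_{sks}(n)\leq n/2$ for even $n$ quoted from~\cite{ka}, together with the translation furnished by Theorem~\ref{structure_constant_Camina_algebra}. Once those are in hand, the argument is a one-line deduction. The only point requiring care is the bookkeeping in matching the hypotheses of Theorem~\ref{structure_constant_Camina_algebra}---namely that ``minimally generated by $n$ elements'' yields matrices of size exactly $n$ and that ``breadth type $(0,m)$'' is the same as $\dim L'=m$---so that the dimension $m$ of $V$ and the size $n$ of its ambient matrix space are correctly identified before invoking the bound on $k_{sks}(n)$.
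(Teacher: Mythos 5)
Your proposal is correct and follows essentially the same route as the paper: both invoke Theorem~\ref{structure_constant_Camina_algebra} to translate the Camina condition into a rank-$n$ subspace of $SKS_n(\mathbb{F}_q)$ of dimension $m$, and then apply the bound $k_{sks}(n)\leq n/2$ (with $k_{sks}(n)=0$ for odd $n$) quoted from~\cite{ka} to conclude $n\geq 2m$. Your explicit parity split and the check that breadth type $(0,m)$ gives $\dim L'=m$ are just a more careful writing-out of the same one-line deduction.
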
 
\begin{proof}
Suppose $L$ is a Camina Lie algebra of nilpotency class $2$, minimally generated by $n$ elements and of breadth type $(0,m)$. By Theorem~\ref{structure_constant_Camina_algebra}, there exists a rank-$n$ subspace of $SKS_n(\mathbb F_q)$ of dimension $m$. Now, from the previous theorem we must have $n\geq 2m$.
\end{proof}

%%%%%%%%%%%%%%%%%%%%%%%%
\section{Classification of $(0, 2)$ type}\label{02type}

In this section, we discuss the classification of breadth type $(0,2)$ nilpotent Lie algebras. This 
mostly follows from the known results about nilpotent Lie algebras of breadth $2$ which have 
been classified in~\cite{kms} and~\cite{re}. Note that a breadth $2$ Lie algebra 
could be of type $(0,2)$ or $(0,1,2)$. We have the following,
\begin{theorem}\label{breadth_type_(0,2)_classification}
Let $L$ be a finite dimensional nilpotent stem Lie algebra of breadth type $(0, 2)$. Then, $L$ is one of the following:
\begin{enumerate}
\item $L$ is a $2$-step nilpotent Camina Lie algebra with $\dim(L')=2$.
\item $L$ is the $2$-step free nilpotent Lie algebra $\mathcal{L}_2$ generated by three elements. 
\item $L$ is a five dimensional $3$-step nilpotent Lie algebra with a following presentation 
$L=\mathrm{span}\{x_1,x_2, y ,z_1,z_2\}$ with $[x_1, x_2]=y, [x_1, y]=z_1, [x_2, y]=z_2$.
\end{enumerate}
\end{theorem}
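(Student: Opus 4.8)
The plan is to classify finite dimensional nilpotent stem Lie algebras of breadth type $(0,2)$ by reducing to the known classification of nilpotent Lie algebras of breadth $2$, then sorting those by nilpotency class. Since breadth type $(0,2)$ means $b(L)=2$ with no element of breadth $1$, I would begin by recalling that by Barnea--Isaacs any nilpotent Lie algebra of breadth type $(0,m)$ has nilpotency class at most $3$; in fact for breadth $2$ the relevant classification from \cite{kms} and \cite{re} partitions such algebras according to whether they are $2$-step or $3$-step nilpotent. So the first step is to split the argument by nilpotency class.

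For the $2$-step case, I would argue as follows. Since $L$ is stem, $Z(L)\subseteq L'$, and in fact $2$-step nilpotency together with stem-ness forces $Z(L)=L'$. Then the classification divides into the Camina and non-Camina subcases. If $[x,L]=L'$ for every $x\in L\setminus L'$, then $L$ is by definition a $2$-step Camina Lie algebra, and since every non-central element has breadth equal to $\dim(L')$, breadth type $(0,2)$ forces $\dim(L')=2$; this gives case (i). If $L$ is \emph{not} Camina, then there is some $x\in L\setminus L'$ with $[x,L]\subsetneq L'$; I would show using the breadth-$2$ condition that the only way to avoid producing a breadth-$1$ element (which is forbidden in type $(0,2)$) is for $L$ to be as large and free as possible on its generators, forcing $L\cong\mathcal{L}_2$. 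Concretely, $\mathcal{L}_2$ is the $3$-generated free $2$-step nilpotent Lie algebra, which by Proposition~\ref{Lmtype0m} has $\dim(\mathcal{L}_2')=3$ and is of breadth type $(0,2)$, giving case (ii).

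For the $3$-step case, I would appeal directly to the breadth-$2$ classification of \cite{kms, re}, which lists the $3$-step nilpotent examples; among these, imposing that \emph{every} non-central element has breadth exactly $2$ (no breadth-$1$ elements) should single out the five-dimensional algebra with presentation $[x_1,x_2]=y$, $[x_1,y]=z_1$, $[x_2,y]=z_2$, giving case (iii). I would verify directly that this algebra is stem, $3$-step nilpotent, and of breadth type $(0,2)$: one computes the adjoint maps $\mathrm{ad}_x$ for a general $x=\alpha x_1+\beta x_2+\cdots$ and checks the rank is always $2$ off the center.

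The main obstacle I anticipate is the non-Camina $2$-step subcase, namely showing that the absence of breadth-$1$ elements rigidly forces $L\cong\mathcal{L}_2$ rather than some smaller quotient of it. The delicate point is that taking a proper central quotient of $\mathcal{L}_2$ (by a one-dimensional ideal) could either keep breadth $2$ or introduce elements of breadth $1$, and one must argue that any such quotient which remains breadth type $(0,2)$ and $2$-step non-Camina is in fact already Camina (hence falls under case (i)) or is isomorphic back to $\mathcal{L}_2$ itself. This amounts to analyzing the structure-constant matrices $X_1,X_2,X_3$ of $\mathcal{L}_2$ and the effect of quotienting on their ranks, which is where the real case-checking lives; the $3$-step and Camina cases should follow more directly from the cited classifications and the definitions.
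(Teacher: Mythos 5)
Your overall strategy---reduce to the breadth-$2$ results of \cite{kms, re}, then split the $2$-step case into Camina versus non-Camina---is in spirit close to the paper's own proof, which filters the characterization in \cite{kms}, Theorem 3.1 through stem-ness and Lemma~\ref{lemma12}. However, your plan for what you yourself identify as the crux (non-Camina $2$-step forces $L\cong\mathcal{L}_2$) has a genuine gap: everything you propose there---analyzing central quotients of $\mathcal{L}_2$ and its structure-constant matrices---presupposes that $L$ is minimally generated by $3$ elements, i.e., is already a quotient of $\mathcal{L}_2$. Nothing in your proposal establishes this. From stem, $2$-step, non-Camina and breadth type $(0,2)$ you only get $Z(L)=L'$ and $\dim(L')\geq 3$ (some $[x,L]\subsetneq L'$ while every non-central element has breadth exactly $2$); you have not excluded, for example, a $5$-generated $2$-step stem algebra with $\dim(L')=4$ all of whose non-central elements have breadth $2$. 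Ruling out such configurations is precisely the nontrivial content of \cite{kms}, Theorem 3.1: $b(L)=2$ forces either $\dim(L')=2$, or $\dim(L')=3$ together with $\dim(L/Z(L))=3$. Your proposal cites \cite{kms} at the outset but never actually deploys this dimension constraint, and without it the "rigidity" argument you sketch cannot get started.

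By contrast, the paper invokes that constraint immediately, and the whole theorem then collapses into three dimension cases: $\dim(L')=2$, where stem plus type $(0,2)$ gives Camina by Lemma~\ref{lemma12} (case (i)); $\dim(L')=3$ with $Z(L)=L'$, where $\dim(L/Z(L))=3$ gives $\dim(L)=6$, so $L\cong\mathcal{L}_2$ by freeness (Proposition~\ref{Lmtype0m}), with no quotient analysis needed at all (case (ii)); and $\dim(L)=5$, $\dim(L')=3$, $Z(L)=\gamma_3(L)$ of dimension $2$, which yields the presentation in case (iii). Incidentally, even after one knows $L$ is $3$-generated, the quotient analysis you flag as the hard case-checking is easier than you anticipate: every nonzero element of $\mathcal{L}_2'$ is itself a Lie bracket (every $3\times 3$ skew-symmetric matrix has rank at most $2$, so every bivector on a $3$-dimensional space is decomposable), hence every proper central quotient of $\mathcal{L}_2$ acquires a non-central element of breadth $1$ and is never of type $(0,2)$. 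The real missing ingredient in your write-up is the bound on $\dim(L/Z(L))$, i.e., on the number of generators, and that must come from \cite{kms} or an equivalent argument.
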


We start with the following result which follows easily from the definitions of nilpotency class and breadth type.
\begin{lemma}\label{lemma2}
Let $L$ be a finite dimensional $c$-step nilpotent Lie algebra of breadth type $(0=m_0, m_1, m_2, 
\cdots, m_r)$. Then, $\dim(Z(L))\geq \dim(\gamma_c(L))\geq m_1$, $\dim(\gamma_2(L))\geq m_r$ 
and $\dim(L/Z(L))> m_r$.
\end{lemma}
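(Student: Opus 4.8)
The plan is to reduce all four inequalities to the single identity $b(x) = \mathrm{rank}(ad_x) = \dim([x,L])$, combined with the observation that the image $[x,L]$ of $ad_x$ lies inside a controlled term of the lower central series according to where $x$ sits. Throughout I would assume $L$ is non-abelian, so that $r\geq 1$ and the symbols $m_1, m_r$ are meaningful; the nilpotency class is then $c\geq 2$.

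First I would dispose of the easiest inclusion. Since $L$ is $c$-step nilpotent, $\gamma_{c+1}(L)=[L,\gamma_c(L)]=0$, which says precisely that every element of $\gamma_c(L)$ is central; hence $\gamma_c(L)\subseteq Z(L)$ and $\dim(Z(L))\geq \dim(\gamma_c(L))$. For the bound $\dim(\gamma_c(L))\geq m_1$, the key is to produce a non-central element inside $\gamma_{c-1}(L)$. Because the nilpotency class is exactly $c$, we have $\gamma_c(L)=[\gamma_{c-1}(L),L]\neq 0$, so there exist $x\in\gamma_{c-1}(L)$ and $y\in L$ with $[x,y]\neq 0$; this $x$ is non-central, so $b(x)\geq m_1$. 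On the other hand $ad_x$ maps $L$ into $[\gamma_{c-1}(L),L]\subseteq\gamma_c(L)$, whence $b(x)=\dim([x,L])\leq\dim(\gamma_c(L))$. Combining gives $m_1\leq\dim(\gamma_c(L))$.

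Next I would handle the two inequalities governed by the top breadth $m_r=b(L)$. Choose $x\in L$ realizing the maximal breadth, so $b(x)=m_r$. Since $ad_x$ has image $[x,L]\subseteq[L,L]=\gamma_2(L)$, we immediately get $m_r=\dim([x,L])\leq\dim(\gamma_2(L))$. For the strict inequality $\dim(L/Z(L))>m_r$, note that $C_L(x)$ has codimension $m_r$, i.e. $\dim(C_L(x))=\dim(L)-m_r$, and that $Z(L)\subseteq C_L(x)$ while $x\in C_L(x)\setminus Z(L)$ (as $x$ is non-central). Hence $C_L(x)$ strictly contains $Z(L)$, giving $\dim(C_L(x))\geq\dim(Z(L))+1$; rearranging $\dim(L)-m_r\geq\dim(Z(L))+1$ yields $\dim(L/Z(L))=\dim(L)-\dim(Z(L))\geq m_r+1>m_r$.

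I do not anticipate a genuine obstacle, as each step is a short dimension count. The one point requiring care is the second inequality: one must justify that $\gamma_{c-1}(L)$ contains a non-central element, which is exactly where the hypothesis that the nilpotency class \emph{equals} $c$ (rather than being merely bounded by $c$) enters. The remaining inequalities use only the elementary facts that $ad_x$ has image in $L'$, and in $\gamma_c(L)$ when $x\in\gamma_{c-1}(L)$, together with the chain $\gamma_c(L)\subseteq Z(L)\subsetneq C_L(x)$.
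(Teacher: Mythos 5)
Your proof is correct: each inequality follows from the identity $b(x)=\dim([x,L])$ together with the inclusions $\gamma_c(L)\subseteq Z(L)$, $[x,L]\subseteq\gamma_c(L)$ for $x\in\gamma_{c-1}(L)$, and $Z(L)\subsetneq C_L(x)$ for non-central $x$, and this is precisely the elementary dimension count the paper has in mind, as it states the lemma without proof, remarking only that it ``follows easily from the definitions of nilpotency class and breadth type.'' Your attention to the edge cases (non-abelian hypothesis, and the class being exactly $c$ so that $\gamma_{c-1}(L)$ contains a non-central element) is a welcome addition to what the paper leaves implicit.
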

\noindent The following result is about the relation between Camina Lie algebra and breadth type. The proof is easy but we include for the sake of completeness. 
\begin{lemma}\label{lemma12}
Let $L$ be a finite dimensional nilpotent stem Lie algebra with $\dim(\gamma_2(L))= m$. Then $L$ is 
$2$-step nilpotent Camina Lie algebra if and only if $L$ is of breadth type $(0, m)$.
\end{lemma}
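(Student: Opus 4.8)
The plan is to prove both directions of the equivalence in Lemma~\ref{lemma12}, using the characterization that a $2$-step nilpotent stem Lie algebra $L$ satisfies $Z(L) = L'$. First I would record the relevant structural facts: since $L$ is a finite dimensional nilpotent stem Lie algebra, we have $Z(L) \subseteq L'$ by definition of stem, and $\gamma_2(L) = L'$. If in addition $L$ is $2$-step nilpotent, then $\gamma_3(L) = 0$, which forces $L' \subseteq Z(L)$, and combined with the stem condition this gives $Z(L) = L' = \gamma_2(L)$, so $\dim(Z(L)) = m$.

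For the forward direction, suppose $L$ is a $2$-step nilpotent Camina Lie algebra. By the definition recalled in Section~\ref{camina_Lie_algebra}, a Camina Lie algebra satisfies $[x, L] = L'$ for all $x \in L \setminus L'$, and for a $2$-step nilpotent Camina Lie algebra we have $Z(L) = L'$. I would then compute the breadth of an arbitrary non-central element $x$, i.e.\ an element of $L \setminus Z(L) = L \setminus L'$. Since $b(x)$ is the co-dimension of $C_L(x)$, equivalently the rank of $\mathrm{ad}_x$, and the image of $\mathrm{ad}_x$ is exactly $[x, L] = L' = \gamma_2(L)$, we get $b(x) = \dim([x,L]) = \dim(L') = m$ for every non-central $x$. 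Every central element has breadth $0$. Hence the distinct breadths are exactly $0$ and $m$, so $L$ is of breadth type $(0, m)$.

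For the converse, suppose $L$ is of breadth type $(0, m)$ with $\dim(\gamma_2(L)) = m$. By Lemma~\ref{lemma2} applied with $r = 1$ and $m_1 = m_r = m$, we have $\dim(\gamma_2(L)) \geq m_r = m$, which is consistent, and more importantly $\dim(\gamma_c(L)) \geq m_1 = m$ where $c$ is the nilpotency class. I would first argue that $L$ must be $2$-step nilpotent: for a non-central element $x$, the image of $\mathrm{ad}_x$ lies in $\gamma_2(L) = L'$, so $b(x) = \dim([x,L]) \leq \dim(L') = m$; since $x$ is non-central its breadth is exactly $m$ (the only nonzero breadth available), forcing $[x, L] = L'$ for every such $x$. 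The condition $[x,L] = L'$ for all $x \in L \setminus L'$ (noting $L \setminus Z(L)$ and $L \setminus L'$ coincide once we know $Z(L) = L'$) is precisely the Camina condition, provided $L$ is $2$-step. To pin down that it is $2$-step, I would use that $[x,L] = L'$ combined with nilpotency: if $L$ had class $\geq 3$, then $\gamma_2(L)$ would properly contain a nonzero $\gamma_3(L)$, yet $[x, L] = \gamma_2(L)$ for all non-central $x$ forces $[\gamma_2(L), L]$ into a restrictive position; tracking this should collapse the class to $2$.

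The main obstacle I anticipate is cleanly establishing that the breadth type condition forces $2$-step nilpotency in the converse direction, rather than merely giving the Camina-type equality $[x,L] = L'$. The equality $b(x) = m$ for all non-central $x$ says the adjoint map has full rank onto $L'$, but one must rule out the possibility of higher nilpotency class where $Z(L) \subsetneq L'$; here I would lean on the stem hypothesis $Z(L) \subseteq L'$ together with the fact that for any $y \in L'$ and any non-central $x$, surjectivity of $\mathrm{ad}_x$ onto $L'$ interacts with the grading of the lower central series. A careful argument is that $[x, L] = L' = \gamma_2(L) \supseteq \gamma_3(L)$, and choosing $x \in \gamma_2(L) \setminus Z(L)$ (which exists if the class exceeds $2$) would contradict either the Camina equality or the dimension count, thereby forcing $Z(L) = L'$ and class exactly $2$. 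Once $2$-step nilpotency is secured, the Camina property is immediate from $[x,L] = L'$ for all $x \notin L' = Z(L)$.
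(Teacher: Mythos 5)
Your proposal is correct and takes essentially the same route as the paper: the forward direction is immediate from the Camina condition together with $Z(L)=L'$, and for the converse both you and the paper rule out nilpotency class $\geq 3$ by picking $x \in \gamma_2(L)\setminus Z(L)$ (nonempty exactly when the class exceeds $2$), for which $0 \neq [x,L] \subseteq \gamma_3(L)$ forces $1 \leq b(x) \leq m-1$, contradicting breadth type $(0,m)$. The only difference is presentational: the paper states this contradiction crisply, while your write-up hedges (``should collapse,'' ``would contradict either\dots or''), though all the needed ingredients are present in your final paragraph.
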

\begin{proof}
Clearly, if $L$ is $2$-step nilpotent Camina Lie algebra with $\dim(\gamma_2(L))= m$, then $L$ is of 
breadth type $(0, m)$. To prove the converse, suppose $L$ is not $c$-step nilpotent with $c\geq 3$. 
Then, $1\leq \dim(\gamma_3(L)) \leq m-1$. For any $x\in \gamma_2(L) \setminus Z(L)\neq \Phi$, we 
get $0 \subsetneq [x, L] \subseteq \gamma_3(L)$, and consequently breadth of $x$ lies in between $1$ 
and $m-1$. Hence $L$ is $2$-step nilpotent and thus $Z(L) = \gamma_2(L)$ (since $L$ is stem). Now 
it follows from the hypothesis  that for all $x\in L \setminus  \gamma_2(L)$, we have $[x, L] = 
\gamma_2(L)$. Thus, $L$ is a $2$-step nilpotent Camina Lie algebra.
\end{proof}
\begin{proof}[\bf{Proof of the Theorem~\ref{breadth_type_(0,2)_classification}}]
Since $L$ is stem, we have $Z(L) \subseteq L'$. From the characterization of breadth $2$ nilpotent 
Lie algebras in~\cite{kms} Theorem 3.1, we have either $\dim (L')=2$, or $\dim (L')=3$ and 
$\dim(L/Z(L))=3$. This gives us following $3$ possibilities, which we will consider one by one.
\begin{enumerate}
\item[(i)] $\dim(L')=2$.
\item[(ii)] $\dim(L) = 6$, $Z(L) = L'$ is of dimension $3$. 
\item[(iii)] $\dim(L) = 5$, $\dim(L')=3$, and $Z(L) = \gamma_3(L)$ is of dimension $2$.
\end{enumerate}
In case (i), $L$ is a $2$-step nilpotent Camina Lie algebra with $\dim(L')=2$. It follows easily 
that any Lie algebra $L$ satisfying case (ii) is isomorphic to the $2$-step free nilpotent Lie 
algebra $\mathcal{L}_2$ generated by three elements. Similarly, any Lie algebra $L$ satisfying case 
(iii) has a presentation  $L = \mathrm{span}\{x_1,x_2,y,z_1,z_2\}$ with $[x_1, x_2]=y, [x_1, y]=z_1, 
[x_2,y]=z_2$.
\end{proof}

Note that the Lie algebra appearing in the case (iii) is the $3$-step free nilpotent Lie algebra 
generated by $2$ elements. Also we see that there are indeed nilpotent Lie algebras of breadth $2$ 
and breadth type $(0,1,2)$. Using~\cite{kms} Theorem 4.1 and 4.5, we can obtain such examples. We 
can further say that a stem nilpotent Lie algebra $L$ is of breadth type $(0, 1, 2)$ if and only if 
$\dim (L') = 2$, and $L$ is not a Camina. We point out that 
Theorem~\ref{breadth_type_(0,2)_classification} works over any field. In particular over fields of 
characteristic $2$, we see that there is a Lie algebra of nilpotency class $3$. Although, there is 
no $2$-group of conjugate type $(1, 2^n)$ for $n\geq 1$.

%%%%%%%%%%%%%%%%%%%%%%%%%%%%%%%%%%%%%%%%%%%%%%%%%
\section{Nilpotent Lie algebras of breadth type $(0,3)$}\label{proof_of_the_main_theorem}

In this section, we prove our main theorem which classifies breadth type $(0,3)$ nilpotent Lie 
algebras over a finite field of odd characteristics. With a slight abuse of notation, we say $L$ is 
$n$-generated Lie algebra when we mean $L$ is minimally generated by $n$ elements. We explore some properties of $2$-step nilpotent Lie algebras.
\begin{proposition}\label{quotient_breadth_type_(0,m)}
Let $L$ be a $2$-step $(m+1)$-generated nilpotent Lie algebra. Then, $L$ is of breadth type $(0, 
m)$ if and only if $L\cong \mathcal L_m/I$ where $I \subsetneq Z(\mathcal L_m)$, and $I$ does not 
contain any non-zero Lie bracket.
\end{proposition}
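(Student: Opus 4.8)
The plan is to first reduce to the situation where $L$ is literally a quotient $\mathcal{L}_m/I$ with $I$ central, and then to read off the breadth of each element from a single dimension count. Since $L$ is minimally generated by $m+1$ elements $y_1,\dots,y_{m+1}$, the universal property of the free $2$-step nilpotent Lie algebra gives a surjection $\pi\colon \mathcal{L}_m\to L$ with $x_i\mapsto y_i$; because the $y_i$ are linearly independent modulo $L'$, the induced map $\mathcal{L}_m/\mathcal{L}_m'\to L/L'$ is a surjection of $(m+1)$-dimensional spaces, hence an isomorphism, and therefore $I:=\ker\pi\subseteq \mathcal{L}_m'=Z(\mathcal{L}_m)$ (using Proposition~\ref{Lmtype0m}). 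Thus $L\cong\mathcal{L}_m/I$ for a central ideal $I$, and what has to be characterised is exactly when the extra conditions on $I$ hold. The main computational input is the formula
\[
b(\bar w)=m-\dim\big([w,\mathcal{L}_m]\cap I\big)
\]
valid for every $\bar w=w+I$ that is non-central in $L$. This follows by identifying the image of $\mathrm{ad}_{\bar w}$ with $([w,\mathcal{L}_m]+I)/I\cong [w,\mathcal{L}_m]/([w,\mathcal{L}_m]\cap I)$ and using $\dim[w,\mathcal{L}_m]=m$ for every $w\notin Z(\mathcal{L}_m)$ by Proposition~\ref{Lmtype0m}; note that $\bar w$ non-central in $L$ forces $w\notin\mathcal{L}_m'$, so the formula applies.

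For the direction assuming $I\subsetneq Z(\mathcal{L}_m)$ contains no non-zero Lie bracket, I would argue as follows. Since $I\subsetneq \mathcal{L}_m'$ we have $L'=\mathcal{L}_m'/I\neq 0$, so $L$ is non-abelian and non-central elements exist. For any such $\bar w$, every non-zero element of $[w,\mathcal{L}_m]$ is a non-zero bracket $[w,u]$ (it lies in the image of $\mathrm{ad}_w$); since $I$ contains no non-zero bracket, $[w,\mathcal{L}_m]\cap I=0$, and the displayed formula gives $b(\bar w)=m$. Hence every non-central element has breadth exactly $m$, so $L$ is of breadth type $(0,m)$.

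For the converse I would assume $L$ has breadth type $(0,m)$. First $I\neq Z(\mathcal{L}_m)$, else $L$ would be abelian; so $I\subsetneq Z(\mathcal{L}_m)$. The crux is to show $I$ contains no non-zero bracket, and here the one genuinely delicate point is a bracket $v=[a,b]\in I$ for which \emph{both} $\bar a$ and $\bar b$ happen to be central in $L$. To rule this out I would first establish the auxiliary fact that breadth type $(0,m)$ forces $Z(L)=L'$: if some $\bar a\in Z(L)\setminus L'$ existed, then since $\bar a\notin L'$ one may split off a $1$-dimensional central summand, $L\cong M\oplus\langle\bar a\rangle$ with $M$ an ideal minimally generated by $m$ elements and $M'=L'$ (as $\bar a$ is central); but an $m$-generated $2$-step nilpotent Lie algebra has $b(w)\le m-1$ for all $w$, because $\mathrm{ad}_w$ vanishes on $\langle w\rangle+M'$, so $L$ would never achieve breadth $m$, a contradiction. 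Granting $Z(L)=L'$, any non-zero bracket $v=[a,b]\in I$ has $a,b\notin\mathcal{L}_m'$, hence $\bar a$ is non-central; then $v\in[a,\mathcal{L}_m]\cap I$ is non-zero and the formula yields $1\le b(\bar a)\le m-1$, contradicting breadth type $(0,m)$. Therefore $I$ contains no non-zero bracket, completing the proof.

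I expect the main obstacle to be precisely the bookkeeping around central elements in the converse: the breadth formula only controls non-central $\bar w$, so I must separately guarantee that no non-zero bracket can be hidden among elements that become central in $L$, which is exactly what the splitting argument and the identity $Z(L)=L'$ achieve.
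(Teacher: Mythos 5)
Your proof is correct. There is nothing in the paper to compare it against line by line: the paper's entire proof of this proposition reads ``Proof is an easy exercise,'' so your write-up supplies the details the authors omit, and it does so along the natural route --- realize $L$ as $\mathcal{L}_m/I$ with $I\subseteq Z(\mathcal{L}_m)$ via the universal property of the free $2$-step nilpotent algebra, then read off breadths from the formula $b(\bar w)=m-\dim\bigl([w,\mathcal{L}_m]\cap I\bigr)$, which is valid because every non-central element of $\mathcal{L}_m$ has breadth exactly $m$ by Proposition~\ref{Lmtype0m}. The one point where the statement is not a purely formal unwinding is exactly the one you isolate: in the converse, a non-zero bracket $[a,b]\in I$ contradicts breadth type $(0,m)$ via the formula only when $\bar a$ is non-central in $L$, and a priori $\bar a$ could lie in $Z(L)\setminus L'$. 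Your auxiliary claim that breadth type $(0,m)$ forces $Z(L)=L'$ --- proved by splitting off the central summand $\langle \bar a\rangle$ and observing that an $m$-generated $2$-step nilpotent Lie algebra has all breadths at most $m-1$ --- closes this gap correctly; the same conclusion can also be reached slightly more directly by extending $\bar a$ to a minimal generating set of $L$ and using the relation $[w,w]=0$ to cut the rank of each $\mathrm{ad}_w$ down to $m-1$. So the proposal is complete and, if anything, more careful than the paper's ``easy exercise'' label suggests.
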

\begin{proof}
 Proof is an easy exercise.
\end{proof}
\noindent We now proceed to determine the $(m+1)$-generated quotients $\mathcal L_m/ I$  of breadth type $(0, m)$, where $I$ is an ideal of dimension $1$ or $2$. 
%%%%%%%%%%%%%
\subsection{When $I$ is of dimension $1$}
We begin with, 
\begin{lemma}\label{main_lemma_1}
Let $k$ be a field. Consider the $2$-step free nilpotent Lie algebra $\mathcal{L}_m$ over $k$ 
generated by the elements $x_1, x_2, \ldots, x_{m+1}$. Let $I$ be a central ideal of dimension $1$ 
such that $\mathcal L_m/I$ is of breadth type $(0, m)$. Then, $\mathcal L_m/I$ is isomorphic to 
$\mathcal L_m/J$ where $J$ is the central ideal of $\mathcal L_m$ spanned by the element $[x_1, x_2] + 
[x_3, x_4] + \cdots + 
[x_{2r-1}, x_{2r}]$ for some $r$ where $2\leq r \leq \ceil{\frac{m}{2}}$.
\end{lemma}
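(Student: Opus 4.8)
The plan is to translate the statement into the classical normal-form theory for alternating bilinear forms and then use the earlier automorphism lemmas to realise the resulting base change inside $\mathcal{L}_m$. First I identify the center $Z(\mathcal{L}_m)=\mathcal{L}_m'$ with the space of alternating $(m+1)\times(m+1)$ matrices over $k$: to $w=\sum_{i<j}c_{ij}[x_i,x_j]$ I associate the alternating matrix $C_w=(c_{ij})$. Under this identification a genuine Lie bracket $[x,y]$, with $x\equiv\sum a_i x_i$ and $y\equiv\sum b_i x_i$ modulo the center, corresponds to $ab^t-ba^t$, which is alternating of rank at most $2$; conversely every alternating matrix of rank $\le 2$ arises this way. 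By Proposition~\ref{quotient_breadth_type_(0,m)}, $\mathcal{L}_m/I$ has breadth type $(0,m)$ precisely when $I$ contains no nonzero bracket, and since $I=\langle w\rangle$ is one-dimensional and $\operatorname{rank}(\lambda C_w)=\operatorname{rank}(C_w)$ for $\lambda\ne 0$, this is equivalent to $\operatorname{rank}(C_w)>2$, i.e. $\operatorname{rank}(C_w)=2r$ with $r\ge 2$.

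Next I record how automorphisms act on the center. By Propositions~\ref{automorphism_of_free_nilpotent_Lie_algebra} and~\ref{set_of_generators}, every automorphism $\theta$ of $\mathcal{L}_m$ sends $x_j\mapsto \sum_i\alpha_{ij}x_i+h_j$ with $A=(\alpha_{ij})\in GL_{m+1}(k)$ and $h_j\in\mathcal{L}_m'$; since the $h_j$ are central they contribute nothing to the brackets, so on the center $\theta$ acts by $C\mapsto ACA^t$, and conversely every $A\in GL_{m+1}(k)$ is realised by such a $\theta$. Thus automorphisms of $\mathcal{L}_m$ induce exactly the congruence action of $GL_{m+1}(k)$ on alternating matrices. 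Now the standard normal-form theorem for alternating forms over an arbitrary field says that any alternating matrix of rank $2r$ is congruent to the block matrix $\operatorname{diag}(H,\dots,H,0)$ with $r$ copies of $H=\left(\begin{smallmatrix}0&1\\-1&0\end{smallmatrix}\right)$. This normal form is precisely the matrix $C_{w_0}$ of $w_0=[x_1,x_2]+[x_3,x_4]+\cdots+[x_{2r-1},x_{2r}]$, so there is $A\in GL_{m+1}(k)$ with $AC_wA^t=C_{w_0}$, equivalently an automorphism $\theta$ with $\theta(I)=J$ for $J=\langle w_0\rangle$.

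Applying Lemma~\ref{Isomorphism_between_central_quotients_extends to_an_automorphism-II} to $I_1=I$ and $I_2=J$ then gives $\mathcal{L}_m/I\cong\mathcal{L}_m/J$, which is the assertion. The constraint on $r$ falls out of the rank: the lower bound $r\ge 2$ is exactly the breadth-type condition above, while $2r=\operatorname{rank}(C_w)\le m+1$ forces $r\le\lfloor(m+1)/2\rfloor=\ceil{\tfrac m2}$. The only genuinely delicate point is the bookkeeping in characteristic $2$: one must work throughout with \emph{alternating} matrices (those coming from brackets automatically have zero diagonal) rather than merely skew-symmetric ones, so that the congruence normal form is the symplectic one and the rank is guaranteed to be even; granting this, the argument is uniform over all fields $k$. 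I expect the main work to lie in verifying the two standard facts cited—that the induced action on the center is the full congruence action, and the alternating normal form—after which the statement follows by assembling the earlier lemmas.
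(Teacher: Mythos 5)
Your proof is correct, and it reaches the normal form by a genuinely different route from the paper's. The paper's proof stays inside $\mathcal{L}_m$ and is entirely constructive: it takes a spanning element of $I$, applies an explicit chain of automorphisms $\phi_1,\phi_2,\phi_3,\dots$ (defined on the generators) to successively clear all terms involving $x_1,x_2$ other than $[x_1,x_2]$, invokes Proposition~\ref{quotient_breadth_type_(0,m)} to see that some surviving coefficient must be nonzero, rescales it to $1$, and iterates until the spanning element has the stated form --- in effect re-deriving the alternating normal form theorem by hand. You instead set up the dictionary once and for all ($Z(\mathcal{L}_m)\cong$ alternating matrices via $w\mapsto C_w$, nonzero brackets corresponding to rank-$2$ matrices, automorphisms inducing precisely the congruence action of $GL_{m+1}(k)$) and then quote the classical congruence normal form for alternating forms, finishing with Lemma~\ref{Isomorphism_between_central_quotients_extends to_an_automorphism-II}. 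All the supporting claims you make are correct: $C_{[x,y]}=ab^{t}-ba^{t}$; every rank-$2$ alternating matrix is of this shape; $C_{\theta(w)}=AC_{w}A^{t}$ with every $A\in GL_{m+1}(k)$ realized, by Propositions~\ref{automorphism_of_free_nilpotent_Lie_algebra} and~\ref{set_of_generators}; and $\lfloor (m+1)/2\rfloor=\lceil m/2\rceil$. What your packaging buys: the breadth-type hypothesis becomes the transparent condition $\mathrm{rank}(C_w)\geq 4$; the bound on $r$ falls out of the parity and size of the rank; the argument is visibly uniform in all characteristics (your care about alternating versus merely skew-symmetric matrices in characteristic $2$ is exactly the right point); and as a bonus $r=\tfrac{1}{2}\mathrm{rank}(C_w)$ is an invariant of $I$, so quotients with distinct $r$ are non-isomorphic, something the paper's treatment does not make explicit. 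What the paper's hands-on version buys: it is self-contained (no appeal to the classification of alternating forms), and its explicit automorphism moves are recycled almost verbatim in Lemma~\ref{main_lemma_2} and Lemma~\ref{main_lemma_2_for_char2}, where the two-dimensional ideals produce quadratic obstructions for which no off-the-shelf normal form is available.
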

\begin{proof}
It is easy to check $\mathcal L_m/J$ is of breadth type $(0, m)$ where $J$ is as given. 

Conversely, any given central ideal $I$ of dimension $1$, up to an automorphism of $\mathcal{L}_m$, is spanned by an element $x \in Z(\mathcal{L}_m)$ of the form 
$$x = [x_1,x_2] + \alpha_{1,3}^1[x_1,x_3] + \dots + \alpha_{i,j}^1[x_{i},x_{j}]  + \dots + 
\alpha_{m,m+1}^1 [x_m,x_{m+1}], 
$$
where $1\leq i < j \leq m+1$ and $\alpha^1_{i,j}$ are scalars. Now applying the automorphism 
$\phi_1$ (because of Proposition~\ref{automorphism_of_free_nilpotent_Lie_algebra} and 
\ref{set_of_generators}) induced by $x_2 \mapsto x_2 - ( \alpha_{1,3}^1 x_3 + \cdots  +  
\alpha_{1,m+1}^1 x_{m+1}), x_i \mapsto x_i$, for $i \neq 2$ on $x$, it gets mapped to 
$$ \phi_1(x)=[x_1, x_2] + \alpha_{2,3}^2 [x_2, x_3] + \cdots + \alpha_{i,j}^2[x_{i},x_{j}]  + \cdots + \alpha_{m, m+1}^2 [x_{m}, x_{m+1}] $$ 
with $2\leq i < j \leq m+1$. We now apply another automorphism $\phi_2$ induced by the map
$x_1 \mapsto x_1 - (\alpha_{2,3}^2 x_3 + \cdots  +  \alpha_{2,m+1}^2 x_{m+1}),  \ x_i \mapsto x_i$ for $i \ne 1$, we get $x$ of the form 
$$\phi_2 (\phi_1(x)) = [x_1,x_2] + \alpha_{3,4}^3 [x_3, x_4] + \cdots   + \alpha_{i,j}^3 [x_{i}, x_{j}]  + \cdots + \alpha_{m,m+1}^3 [x_m,x_{m+1}],$$
with $3\leq i < j \leq m+1$. From previous proposition, $\mathcal{L}_m/I$ is of breadth type $(0,m)$ if and only if $I$ does not contain any non-zero Lie bracket. In our case, $I$ does not contain any non-zero Lie bracket if and only if at least one of $\alpha_{i,j}^3$, $3\leq i < j \leq m$ is a non-zero scalar. Up to an automorphism of $\mathcal{L}_m$, we can assume that $\alpha_{3,4}^3$ is non-zero. Now consider the automorphism $\phi_3$ induced by the map $x_3 \mapsto (\alpha_{3,4}^3)^{-1} x_3, x_i \mapsto x_i$ for $ i \neq 3$, we get
$$\phi_3 (\phi_2 (\phi _1 (x))) = [x_1, x_2] + [x_3, x_4] + \alpha_{3,5}^4 [x_3, x_5] 
+ \cdots  +\alpha_{i,j}^4[x_{i}, x_{j}]  + \cdots + \alpha_{m, m+1}^4 [x_m, x_{m+1}],$$
with $3\leq i < j \leq m+1$. Finally, if all $\alpha_{i, j}^4$ are $0$, we are done. If not, then finite repetitions of the above process reduces $x$  to the desired form, thus completing the proof.
\end{proof}

\begin{corollary}\label{main_corr_1}
Let $L$ be a $(m+1)$-generated $2$-step nilpotent Lie algebra generated by the elements $\{x_1, \ldots, x_{m+1}\}$ of dimension $\frac{(m+1)(m+2)}{2}-1$. Then, $L$ is of breadth type $(0, m)$ if and only if $L$ is isomorphic to $\mathcal{L}_m/I$, where $I$ is the one dimensional central ideal spanned by  $[x_1, x_2] + [x_3, x_4] + \cdots + [x_{2r-1}, x_{2r}]$ for some $r$ where $2\leq r \leq \ceil{\frac{m}{2}}$.
\iffalse
??? In particular, a $4$-generated $2$-step stem nilpotent Lie algebra $L$ of dimension $9$ is of breadth type $(0, 3)$ if and only if $L$ is isomorphic to $\mathcal{L}_3/I$, where $I$ is the one dimensional central ideal spanned by the element $[x_1, x_2] + [x_3, x_4]$. ????
\fi
\end{corollary}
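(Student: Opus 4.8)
The plan is to reduce the statement to Lemma~\ref{main_lemma_1} after two elementary bookkeeping observations that pin down the ideal $I$. First I would record that any $(m+1)$-generated $2$-step nilpotent Lie algebra $L$ arises as a quotient of the free object $\mathcal{L}_m$: fixing a minimal generating set $x_1,\dots,x_{m+1}$ of $L$ and invoking the universal property of $\mathcal{L}_m = FL_{m+1}/\gamma_3(FL_{m+1})$ gives a surjection $\mathcal{L}_m \twoheadrightarrow L$, hence $L \cong \mathcal{L}_m/I$ for some ideal $I$. Because the minimal number of generators equals $\dim(L/L')$ and this must stay $m+1$, the ideal $I$ is forced inside $\mathcal{L}_m' = Z(\mathcal{L}_m)$; indeed, if $I \not\subseteq \mathcal{L}_m'$ then $\dim(\mathcal{L}_m' + I) > \dim \mathcal{L}_m'$ and $\dim(L/L') < m+1$, contradicting minimality. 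Comparing dimensions then forces
\[
\dim I = \dim \mathcal{L}_m - \dim L = \tfrac{(m+1)(m+2)}{2} - \left(\tfrac{(m+1)(m+2)}{2} - 1\right) = 1,
\]
so $I$ is a one-dimensional central ideal.

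With $I$ identified as a one-dimensional subspace of $Z(\mathcal{L}_m)$, the forward direction is immediate. Assuming $L \cong \mathcal{L}_m/I$ is of breadth type $(0,m)$, Lemma~\ref{main_lemma_1} applies verbatim and yields an isomorphism $\mathcal{L}_m/I \cong \mathcal{L}_m/J$ with $J$ spanned by $[x_1,x_2] + [x_3,x_4] + \cdots + [x_{2r-1},x_{2r}]$ for some $r$ with $2 \le r \le \ceil{\frac{m}{2}}$, which is exactly the asserted normal form.

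For the converse I would check directly that each $\mathcal{L}_m/J$ of the displayed shape has breadth type $(0,m)$. By Proposition~\ref{quotient_breadth_type_(0,m)} it suffices to see that $J$ contains no non-zero Lie bracket. Encoding elements of $Z(\mathcal{L}_m) = \mathcal{L}_m'$ by skew-symmetric structure matrices as in Theorem~\ref{structure_constant_Camina_algebra}, a single non-zero bracket $[a,b]$ corresponds to a rank-$2$ matrix, whereas the generator of $J$ corresponds to a block form of rank exactly $2r$; every non-zero scalar multiple therefore has rank $2r \ge 4 > 2$ and cannot equal a single bracket. Since $r \ge 2$, this rank consideration is precisely why the lower bound $2 \le r$ appears, and it coincides with the easy check already recorded at the start of the proof of Lemma~\ref{main_lemma_1}.

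I do not anticipate a serious obstacle here: essentially all the content is carried by Lemma~\ref{main_lemma_1}, and the only genuinely new ingredient is the dimension-and-generation argument forcing $I$ to be one-dimensional and central. The one place warranting care is justifying $I \subseteq \mathcal{L}_m'$ cleanly, which I would phrase through the identity $\dim(L/L') = m+1$ for the minimal number of generators.
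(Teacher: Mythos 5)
Your proposal is correct and follows essentially the same route as the paper: realize $L$ as $\mathcal{L}_m/I$ with $I$ a one-dimensional central ideal (forced by the generation and dimension hypotheses) and then invoke Lemma~\ref{main_lemma_1} for both directions. The only additions are details the paper leaves implicit --- the justification that $I \subseteq \mathcal{L}_m'$ via $\dim(L/L') = m+1$, and the rank-$2r$ versus rank-$2$ argument for the converse, which correctly substantiates the ``easy to check'' claim at the start of the paper's proof of Lemma~\ref{main_lemma_1}.
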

\begin{proof}
Since $L$ is $(m+1)$-generated $2$-step nilpotent Lie algebra of dimension $\frac{(m+1)(m+2)}{2}-1$, $L$ must be isomorphic to a quotient of $\mathcal{L}_m$ by a one-dimensional ideal of $L'$. The proof now follows from the preceding lemma.
\end{proof}

%%%%%%%%%
\subsection{When $I$ is of dimension $2$}

We now proceed to characterize the two dimensional central ideals $J$ of $\mathcal{L}_3$ such that the quotient Lie algebra $\mathcal{L}_3/J$ is of breadth type $(0, 3)$.

\begin{lemma}\label{main_lemma_2}
Let $k$ be a field of characteristic not equal to $2$. Let $L$ be a $4$-generated $2$-step nilpotent 
Lie algebra over $k$ of dimension $8$. Consider the Lie algebra $\mathcal{L}_3$ over $k$ with 
generators $x_1, x_2, x_3, x_4$. Then, $L$ is of breadth type $(0, 3)$ if and only $L$ is 
isomorphic to $\mathcal{L}_3/J$, where $J$ is the two dimensional central ideal spanned by  the 
elements $[x_1,x_2] + [x_3,x_4]$ and $[x_1, x_3] + r[x_2, x_4]$, where $r$ is a non-square in 
$k^{\times}$.
\end{lemma}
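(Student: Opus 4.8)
The plan is to recast the statement as a problem about the Plücker quadric on $\Lambda^2 V$. A $4$-generated $2$-step nilpotent Lie algebra of dimension $8$ is necessarily a quotient of the free object $\mathcal{L}_3$ (of dimension $10$) by a $2$-dimensional central ideal $J\subseteq Z(\mathcal{L}_3)=\mathcal{L}_3'$, exactly as in the proof of Corollary~\ref{main_corr_1}, so it suffices to classify such $J$. I would identify $Z(\mathcal{L}_3)$ with $\Lambda^2 V$, $V=\langle x_1,x_2,x_3,x_4\rangle$, via $[x_i,x_j]\mapsto x_i\wedge x_j$; under this identification the nonzero Lie brackets are precisely the nonzero decomposable $2$-vectors, and by Propositions~\ref{automorphism_of_free_nilpotent_Lie_algebra} and~\ref{set_of_generators} every automorphism of $\mathcal{L}_3$ acts on the centre as $\Lambda^2 g$ for some $g\in GL(V)=GL_4(k)$. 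Define $Q$ on $\Lambda^2 V$ by $\omega\wedge\omega = 2Q(\omega)\,x_1\wedge x_2\wedge x_3\wedge x_4$, so that $Q(\sum_{i<j}a_{ij}x_i\wedge x_j)=a_{12}a_{34}-a_{13}a_{24}+a_{14}a_{23}$ is the split rank-$6$ form; then $\omega$ is decomposable iff $Q(\omega)=0$, and from $(\Lambda^2 g\,\omega)\wedge(\Lambda^2 g\,\omega)=\det(g)\,(\omega\wedge\omega)$ one gets $Q\circ\Lambda^2 g=\det(g)\,Q$, so $GL_4(k)$ acts by similitudes. By Proposition~\ref{quotient_breadth_type_(0,m)} and Lemma~\ref{Isomorphism_between_central_quotients_extends to_an_automorphism-II}, the lemma becomes: every $2$-plane $J\subset\Lambda^2 V$ on which $Q$ is anisotropic lies in the $GL_4(k)$-orbit of $J_0=\langle x_1\wedge x_2+x_3\wedge x_4,\ x_1\wedge x_3+r\,x_2\wedge x_4\rangle$ for a non-square $r$.

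For the \emph{if} direction I would simply evaluate $Q$ on $J_0$: writing the general element as $\alpha(x_1\wedge x_2+x_3\wedge x_4)+\beta(x_1\wedge x_3+r\,x_2\wedge x_4)$ one finds $Q=\alpha^2-r\beta^2$, which vanishes only at $\alpha=\beta=0$ precisely when $r$ is a non-square. Hence $J_0$ contains no nonzero decomposable vector, and $\mathcal{L}_3/J_0$ has breadth type $(0,3)$ by Proposition~\ref{quotient_breadth_type_(0,m)}.

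For the converse I would reduce an arbitrary anisotropic plane $J$ to $J_0$ in two moves. First pick any nonzero $u\in J$; since $Q(u)\neq 0$ the $2$-vector $u$ is a nondegenerate alternating form on $V$, and as all such forms are $GL_4$-equivalent (symplectic basis theorem) there is $g$ carrying $u$ to $x_1\wedge x_2+x_3\wedge x_4$, the forced multiplier being $\det g=1/Q(u)$. Replacing the second basis vector of $J$ by its orthogonal projection onto $u^\perp$ (a legitimate change of basis of $J$, valid since $\mathrm{char}\,k\neq 2$ and $Q(u)\neq 0$) gives $J=\langle u,\tilde v\rangle$ with $\tilde v\in u^\perp$ and $Q(\tilde v)=-r$, where the class of $r:=-Q(\tilde v)$ in $k^\times/(k^\times)^2$ is the anisotropy invariant of $J$ and is a non-square. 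Second, the stabiliser of $u$ in $GL_4(k)$ is the symplectic group $\mathrm{Sp}_4(k)$, which preserves the nondegenerate $5$-dimensional space $u^\perp$; I would use it to carry $\tilde v$ onto $x_1\wedge x_3+r\,x_2\wedge x_4$ (which lies in $u^\perp$ and has $Q=-r$), yielding $J\cong J_0$.

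The main obstacle is this last transitivity statement: that $\mathrm{Sp}_4(k)=\mathrm{Stab}(u)$ acts transitively on $\{w\in u^\perp:Q(w)=-r\}$. Witt's theorem gives transitivity of the full orthogonal group $\mathrm{O}(u^\perp)\cong\mathrm{O}_5$ on vectors of fixed nonzero norm, so the real point is that the image of $\mathrm{Sp}_4(k)$ in $\mathrm{O}_5$ is large enough. Through the exceptional isomorphism $\mathrm{Sp}_4\cong\mathrm{Spin}_5$ this image is $\Omega_5(k)$; over $\F$ it remains transitive on each fixed nonzero norm, because the $4$-dimensional space $w^\perp$ carries anisotropic vectors of both square classes, so the reflections sitting inside $\mathrm{Stab}(w)$ already surject onto $\mathrm{O}_5/\Omega_5$. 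The hypothesis $\mathrm{char}\,k\neq 2$ is used both to project orthogonally and to absorb the discriminant into a single parameter, and over $\F$ the non-square class is unique, so $r$ may be taken to be the fixed non-square $t$ — which is exactly case (iv) of Theorem~\ref{breadth_type_(0,3)_classification_odd_char_finite_fields}.
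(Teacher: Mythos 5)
Your proof is correct, and it takes a genuinely different route from the paper's. The paper argues by explicit reduction: a chain of automorphisms $\phi_1,\dots,\phi_5$ puts an arbitrary two-dimensional central ideal into the form $\langle [x_1,x_2]+[x_3,x_4],\ [x_1,x_3]+\alpha[x_2,x_4]+\beta[x_3,x_4]\rangle$, the breadth-type condition is shown to be the unsolvability of $t^2\alpha-t\beta-1=0$, i.e.\ that $\beta^2+4\alpha$ is a non-square, and a final automorphism $\phi_6$ absorbs $\beta$ to reach $[x_1,x_3]+r[x_2,x_4]$. You instead transport the whole problem to the Klein quadric: anisotropic $2$-planes for the Pl\"ucker form $Q$ on $\Lambda^2V$, modulo the similitude action of $\Lambda^2\mathrm{GL}_4(k)$, handled by the rank-$4$ normal form for $2$-vectors, Witt's theorem, and the exceptional isomorphism $\mathrm{Sp}_4\cong\mathrm{Spin}_5$ together with a reflection/spinor-norm argument to upgrade $\mathrm{O}_5$-transitivity to $\Omega_5$-transitivity. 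Your version isolates the actual invariant (the square class of the discriminant of $Q|_J$), explains structurally why exactly one parameter modulo squares survives, and makes the characteristic-$2$ analogue predictable (the trace condition in Lemma~\ref{main_lemma_2_for_char2} is the Arf invariant in your language); the paper's version is longer but entirely elementary and produces all isomorphisms explicitly.

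One point needs patching: you justify the crucial $\Omega_5$-transitivity only over $\F$ (``both square classes''), whereas the lemma is stated for every field of characteristic $\neq 2$. The fix is cheap. By Witt cancellation $Q|_{u^\perp}\cong\langle -1\rangle\perp H\perp H$ ($H$ the hyperbolic plane), and for $w\in u^\perp$ with $Q(w)\neq 0$ the complement of $w$ inside $u^\perp$ must still contain a hyperbolic plane (an anisotropic $4$-dimensional complement would force Witt index $\leq 1$ for $u^\perp$), hence is universal; so the reflections in $\mathrm{Stab}(w)$ realize \emph{every} spinor-norm class and your coset argument runs verbatim over any such field. To be fair, the paper's own proof has the mirror blemish at this level of generality: the assertion that $(4r)^{-1}(\beta^2+4\alpha)$ is a square because both $r$ and $\beta^2+4\alpha$ are non-squares is valid only when the non-squares form a single square class (e.g.\ over $\F$); over a general field both arguments really produce an $r$ depending on $L$, which is all that the statement, read existentially, asserts.
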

\begin{proof}
Any given central ideal $J$ of dimension $2$, up to some automorphism $\phi_1$, can be written 
as one of the following two types:
$$J_1 = \mathrm{span}\left\{ [x_1,x_2] + {i_1}[x_1,x_3] + {i_2}[x_1,x_4] + {i_3}[x_2,x_3] + 
{i_4}[x_2,x_4], \; [x_3, x_4]\right\} \text{ or,} $$
\begin{eqnarray*}
J_2 &=& \mathrm{span}\{[x_1,x_2] + i_1^1 [x_1,x_4] + i_2^1 [x_2,x_3] + i_3^1 [x_2,x_4] + i_4^1 
[x_3,x_4], \ \ [x_1,x_3] + j_1^1 [x_1,x_4]\\&& + j_2^1 [x_2,x_3] + j_3^1[x_2,x_4] + j_4^1 
[x_3,x_4]\}
\end{eqnarray*}
\noindent Notice that the breadth of element $\bar{x_3}$ is less than $3$ in $\mathcal{L}_3/J_1$ 
since $[x_3, x_4] \in J_1$. Thus, we can assume that $\phi_1(J) = J_2$. 

Now applying the automorphism $\phi_2$ induced by, $x_2 \mapsto x_2 - i_1^1 x_4, x_3 \mapsto x_3 
- j_1^1 x_4$,  $x_i \mapsto x_i,$ for $i \ne 2, 3$, we have,  
\begin{eqnarray*}
\phi_2(J_2) &=& \mathrm{span} \{ [x_1,x_2] + 
i_2^2 [x_2,x_3] + i_3^2 [x_2,x_4] + i_4^2[x_3,x_4],\ \  [x_1,x_3] + j_2^2 [x_2,x_3] \\ &&+ j_3^2 
[x_2,x_4]  + j_4^2 [x_3,x_4] \}.
\end{eqnarray*}
Now consider the automorphism $\phi_3$ induced by the map, $x_1 \mapsto x_1 +  i_2^2 x_3 +  i_3^2 
x_4$, $x_i \mapsto x_i,$ for $i \ne 1$. Notice that,
$$\phi_3(\phi_2(J_2)) = \textrm{span}\{ [x_1,x_2] + i_4^3 [x_3,x_4],\ \  [x_1,x_3] + j_2^3 
[x_2,x_3] + j_3^3 [x_2,x_4] + j_4^3 [x_3,x_4]\}.$$
By Proposition~\ref{quotient_breadth_type_(0,m)}, $ i_4^3$ has to be a non-zero scalar, otherwise 
$\mathcal{L}_3/\phi_3(\phi_2(J_2))$ can not be of breadth type $(0, 3)$ and consequently 
$\mathcal{L}_3/J$ can not be of breadth type $(0,3)$.

Now, consider the automorphism $\phi_4$ induced by the map $ x_4 \mapsto (i_4^3)^{-1} x_4, x_i 
\mapsto x_i$, for $i \ne 4$. Applying this we get, 
$$\phi_4(\phi_3(\phi_2(J_2))) = \textrm{span} \{ [x_1,x_2] + [x_3,x_4],\ \  [x_1,x_3] + j_2^4 
[x_2,x_3] + j_3^4 [x_2,x_4] + j_4^4 [x_3,x_4] \}.$$
Again, it follows that $j_3^4$ has to be a non-zero scalar, otherwise 
$\mathcal{L}_3/ \phi_4(\phi_3(\phi_2(J_2)))$ can not be of breadth type $(0,3)$, and consequently 
$\mathcal{L}_3/J$ can not be of breadth type $(0,3)$.
Now consider the automorphism $\phi_5$ induced by the map, $x_4 \mapsto x_4 - j_2^4 (j_3^4)^{-1} 
x_3, x_i \mapsto x_i$, for $i \ne 4$. We get,
$$J_3 = \phi_5(\phi_4(\phi_3(\phi_2(J_2)))) = \textrm{span}\{[x_1,x_2] + [x_3,x_4],\ \  [x_1,x_3] 
+ \alpha [x_2,x_4] + \beta [x_3,x_4] \},$$
where $\alpha$ is a non-zero scalar.

Recall from Proposition~\ref{quotient_breadth_type_(0,m)} that $\mathcal{L}_3/J$ is of breadth type 
$(0, 3)$ if and only if $\mathcal{L}_3/J_3$ is of breadth type  $(0, 3)$ if and only if $J_3$ does 
not contain any non-zero Lie bracket. Suppose that $J_3$ contains a non-zero Lie bracket, say 
$[a, b]$. Without loss of generality, we can assume that $a = x_1 + l_1 x_3 + l_2 x_4$ and $b = t_1 
x_2 + t_2 x_3 + t_3 x_4$, with $t_1$ non-zero. Notice that both $l_1$ and $l_3$ have to be zero. 
Moreover if $[a, b] \in J_3 \setminus \{ 0 \}$ then so is  $t_1^{-1}[a, b]$. So, without loss 
of generality we can assume that $a = x_1 - lx_4$ and $b = x_2 + t x_3$ and consequently, $[a, b] 
= [x_1,x_2] + t [x_1,x_3] + l [x_2,x_4] + lt [x_3,x_4]$. Notice that if $[a, b] \in J_3 \setminus \{ 
0 \}$, then 
$[a,b] = ([x_1,x_2]+[x_3,x_4]) + t([x_1,x_3] + \alpha [x_2,x_4]+\beta [x_3,x_4])$ gives 
$[x_1,x_2] + t[x_1,x_3] + l[x_2,x_4] + lt[x_3, x_4] = [x_1,x_2]+ t [x_1,x_3] + t\alpha [x_2,x_4] + 
(1+t  \beta) [x_3,x_4]$. 
Thus, by comparing coefficients we get $ l = t\alpha$ and $lt = 1+ t\beta$ in the field $k$.
Solving this we get
$$t^2\alpha - t \beta - 1 = 0.$$

We can solve this for $t$ if and only if $\beta^2 + 4 \alpha$ is a square (since $char(k) \neq 2$). 
From this we conclude that $\mathcal{L}_3/J$ is of breadth type $(0, 3)$ if and only if $\beta^2 + 
4 \alpha$ is a non-square in the base field. Suppose $\beta^2 + 4 \alpha$ is a non-square in $k$. 
If $\beta = 0$, then $4\alpha$ is a non-square if and only if $\alpha$ is a non-square, and hence 
we are done. So, we assume that $\beta \neq 0$. Let us now fix a non-square $r\in k$. Being 
both $r$ and $\beta^2 + 4 \alpha$ non-square, we get $(4r)^{-1}(\beta^2 + 4 
\alpha)$ is a non-zero square. Let $(4r)^{-1}(\beta^2 + 4 \alpha) = l^2$ and $t = \beta / 2$. Now 
consider the automorphism $\phi_6$ induced by the map $x_1 \mapsto l x_1 + t x_4, x_3 \mapsto l 
x_3 + t x_2, x_i \mapsto x_i$, for $i \ne 1, 3$, it follows that,
\begin{eqnarray*}
&&\phi_6 (J_3)\\
&=& \phi_6 (\langle [x_1, x_2] + [x_3, x_4],  [x_1, x_3] + \alpha [x_2, x_4] + \beta [x_3, x_4] 
\rangle)\\
& =& \langle [l x_1 + t x_4, x_2] + [l x_3 + t x_2, x_4], [l x_1 + t x_4, l x_3 + t x_2] 
+ \alpha [x_2, x_4] + \beta [l x_3 + t x_2, x_4] \rangle \\
& =& \langle l([x_1, x_2] + [x_3, x_4]), [l x_1 + t x_4, l x_3 + t x_2] + \alpha [x_2, x_4] + \beta 
[l x_3 + t x_2, x_4] \rangle\\
& = &\langle l([x_1, x_2] + [x_3, x_4]), lt[x_1, x_2] + l^2[x_1, x_3] +  (\alpha + t\beta - t^2) 
[x_2, x_4] + l (\beta - t) [x_3, x_4] \rangle \\
& =& \langle l([x_1, x_2] + [x_3, x_4]), lt[x_1, x_2] + l^2[x_1, x_3] +  (\alpha + \beta^2/4) 
[x_2, x_4] + lt [x_3, x_4] \rangle\\
& = &\langle l([x_1, x_2] + [x_3, x_4]),  l^2[x_1, x_3] +  rl^2 [x_2, x_4] \rangle\\
& = & \langle l([x_1, x_2] + [x_3, x_4]), [x_1, x_3] +  r [x_2, x_4] \rangle.
\end{eqnarray*}
Now, a Lie algebra $L$ minimally generated by $4$ elements and of dimension $8$ is isomorphic to a 
quotient of $\mathcal L_3$ by a central ideal of dimension $2$. Hence, the proof follows. 
\end{proof}
\begin{corollary}\label{main_corr_2}
Let $k$ be a field with $char(k)\neq 2$. Suppose every element of $k$ has a square root in $k$. 
Then, there does not exist a $4$-generated $2$-step nilpotent Lie algebra of dimension $8$ and 
breadth type $(0, 3)$.
\end{corollary}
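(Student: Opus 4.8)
The plan is to realize the corollary as an immediate consequence of Lemma~\ref{main_lemma_2}, with the hypothesis on square roots doing all the work. First I would record the dimension count: by Proposition~\ref{Lmtype0m} the free $2$-step nilpotent Lie algebra $\mathcal{L}_3$ has dimension $\frac{4\cdot 5}{2}=10$ and center $Z(\mathcal{L}_3)=\mathcal{L}_3'$ of dimension $6$. Hence any $4$-generated $2$-step nilpotent Lie algebra $L$ of dimension $8$ is forced to be a quotient $\mathcal{L}_3/J$ by a central ideal $J\subseteq Z(\mathcal{L}_3)$ with $\dim J = 10-8 = 2$. This is exactly the setting of Lemma~\ref{main_lemma_2}, and the standing assumption $\mathrm{char}(k)\neq 2$ matches its hypotheses.

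Next I would invoke the dichotomy established inside the proof of Lemma~\ref{main_lemma_2}. After reducing $J$ by automorphisms to the normal form $J_3 = \mathrm{span}\{[x_1,x_2]+[x_3,x_4],\ [x_1,x_3]+\alpha[x_2,x_4]+\beta[x_3,x_4]\}$ with $\alpha\neq 0$, the quotient $\mathcal{L}_3/J$ is of breadth type $(0,3)$ if and only if $J_3$ contains no non-zero Lie bracket, which in turn fails precisely when the quadratic $t^2\alpha - t\beta - 1 = 0$ admits a root $t\in k$. Its discriminant is $\beta^2+4\alpha$, so solvability is equivalent to $\beta^2+4\alpha$ being a square in $k$. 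Under the present hypothesis every element of $k$ is a square, so $\beta^2+4\alpha$ is automatically a square; thus the quadratic always has a root, $J_3$ always contains a non-zero Lie bracket, and by Proposition~\ref{quotient_breadth_type_(0,m)} the quotient $\mathcal{L}_3/J$ can never be of breadth type $(0,3)$.

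Equivalently, and perhaps more transparently, Lemma~\ref{main_lemma_2} asserts that $\mathcal{L}_3/J$ is of breadth type $(0,3)$ if and only if $J$ is carried by an automorphism onto the ideal spanned by $[x_1,x_2]+[x_3,x_4]$ and $[x_1,x_3]+r[x_2,x_4]$ for some non-square $r\in k^{\times}$. Since the hypothesis that every element of $k$ has a square root says exactly that $k^{\times}$ contains no non-squares, no such $J$ can exist, and hence no $L$ of the required shape exists either. I do not anticipate a genuine obstacle here: the entire content is already packaged in Lemma~\ref{main_lemma_2}, and the only point requiring (minor) care is the opening reduction that every $4$-generated $2$-step nilpotent Lie algebra of dimension $8$ arises as such a quotient $\mathcal{L}_3/J$, which follows from the dimension count together with the fact that $\mathcal{L}_3$ is the free $2$-step nilpotent Lie algebra on four generators.
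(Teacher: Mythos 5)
Your proposal is correct and follows exactly the route the paper intends: the corollary is an immediate consequence of Lemma~\ref{main_lemma_2} (plus the dimension count identifying $L$ with $\mathcal{L}_3/J$ for a $2$-dimensional central ideal $J$), since the lemma's normal form requires a non-square $r\in k^{\times}$, which cannot exist when every element of $k$ is a square. The paper offers no separate proof precisely because this is the whole argument, so your write-up matches it in substance.
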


%%%%%%%%%
\subsection{Proof of the main theorem}

In this subsection we work over the finite field $\mathbb F_q$ of odd characteristics. We begin with 
the characterization of breadth $3$ finite dimensional nilpotent Lie algebras over finite fields of 
odd characteristics given in~\cite{swk} (see Theorem 3.1).
\begin{theorem}\label{breadth_3_characteization}
Let $L$ be a finite dimensional nilpotent Lie algebra over $\mathbb F_q$. Then, $b(L)=3$ if and only 
if one of the following holds,
\begin{enumerate}
\item[(a)] $\dim(L') = 3$ and $\dim(L/Z(L)) \geq 4$.
\item[(b)] $\dim(L') \geq 4$ and $\dim(L/Z(L)) = 4$.
\item[(c)] $\dim(L') = 4$ and there exists a $1$ dimensional central ideal $I$ of $L$ such that $\dim\left ( \dfrac{L/I}{Z(L/I)} \right) = 3$.
\end{enumerate}
\end{theorem}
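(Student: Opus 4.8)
The plan is to encode breadth in the bracket form. Writing $\overline L = L/Z(L)$ and using the $L'$-valued alternating form $a_L\colon \overline L\times \overline L\to L'$ of Section~\ref{isoclinism}, the breadth $b(x)$ is exactly the rank of the contracted linear map $a_L(\bar x,\cdot)\colon \overline L\to L'$, so $b(L)$ is the maximum of these ranks. Three elementary facts will drive the argument. First, since $[x,L]\subseteq L'$ we have $b(L)\le \dim L'$; second, since $\langle x\rangle + Z(L)\subseteq C_L(x)$ for every noncentral $x$, we have $b(L)\le \dim\overline L-1$; third, for a one-dimensional central ideal $I$, comparing $[x,L]$ with its image in $L/I$ gives $b(L)-1\le b(L/I)\le b(L)$. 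Because $b(L)$, $\dim L'$ and $\dim\overline L$ are all isoclinism invariants, I may pass to the stem representative of $L$ whenever I need the classification of small breadth; note that by Lemma~\ref{lemma2} any $L$ with $b(L)=3$ already satisfies $\dim L'\ge 3$ and $\dim\overline L\ge 4$.

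For the ``if'' direction I would show $b(L)=3$ in each case by sandwiching. The upper bound $b(L)\le 3$ is immediate: case (a) uses $b(L)\le\dim L'=3$, case (b) uses $b(L)\le\dim\overline L-1=3$, and case (c) uses the hypothesis $\dim\big((L/I)/Z(L/I)\big)=3$, which forces $b(L/I)\le 2$ and hence $b(L)\le b(L/I)+1\le 3$. For the matching lower bound I would rule out $b(L)\le 2$ using Theorems~\ref{breadth_type_(0,1)_classification} and~\ref{breadth_type_(0,2)_classification}: these show that $b(L)\le 2$ forces $\dim L'\le 3$, and that $b(L)\le 2$ together with $\dim L'=3$ forces $\dim\overline L=3$. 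In cases (b) and (c) the bound $\dim L'\le 3$ contradicts $\dim L'\ge 4$, while in case (a) the equality $\dim\overline L=3$ contradicts $\dim\overline L\ge 4$. Hence $b(L)=3$ throughout.

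For the ``only if'' direction, assume $b(L)=3$, so $\dim L'\ge 3$ and $\dim\overline L\ge 4$. If $\dim L'=3$ we are in case (a); if $\dim L'\ge 4$ and $\dim\overline L=4$ we are in case (b). The entire difficulty is concentrated in the regime $\dim L'\ge 4$ and $\dim\overline L\ge 5$, where I must land in case (c): produce a one-dimensional central ideal $I$ with $\dim\big((L/I)/Z(L/I)\big)=3$ and show $\dim L'=4$. The mechanism I would use is that such an $I$ must be a central line meeting the three-dimensional image $[x,L]$ of every breadth-$3$ element $x$; once it exists, the monotonicity bound forces $b(L/I)=2$, and the breadth-$2$ classification applied to the stem part of $L/I$ pins $\dim(L/I)'=3$. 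Since such an $I$ necessarily lies in $L'$, this yields $\dim L'=\dim(L/I)'+1=4$.

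The hard part is exactly the existence of this common central line, equivalently the claim that $\dim L'\ge 5$ is incompatible with $b(L)=3$ once $\dim\overline L\ge 5$. I would recast this as a statement about the surjection $\wedge^2\overline L\twoheadrightarrow L'$: in coordinates $a_L$ becomes a tuple of ordinary alternating forms on $\overline L$, and $b(L)=3$ says every contraction $a_L(\bar x,\cdot)$ has rank at most $3$; the goal is to show this caps $\dim L'$ at $4$ and forces a common one-dimensional intersection of the images $[x,L]$. This is precisely the linear algebra of rank-$n$ subspaces of skew-symmetric matrices studied in Section~\ref{camina_Lie_algebra}, and over $\mathbb F_q$ I expect the Chevalley--Warning bound $k_{sks}(n)\le n/2$ behind Theorem~\ref{finite_field_Camina_Lie_algebra} to be the decisive input --- which is also why the statement is confined to finite fields. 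The remaining verifications, that the line $I$ is genuinely an ideal and that $\gamma_3(L)$ sits correctly, are routine bookkeeping once the rank bound is secured.
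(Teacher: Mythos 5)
First, a point of comparison you could not have known: the paper contains no proof of this theorem at all. It is imported wholesale from the reference \cite{swk} (Sriwongsa--Wiboonton--Khuhirun, Theorem 3.1), a full research article devoted to exactly this characterization, and the present paper only quotes it before using it in Lemma~\ref{main_lemma_3}. So you are attempting a blind reproof of an external research-level result. Within your attempt, the parts you actually prove are correct: the three elementary facts ($b(L)\le\dim L'$, $b(L)\le\dim(L/Z(L))-1$, and $b(L)-1\le b(L/I)\le b(L)$ for a one-dimensional central ideal $I$) all hold; the ``if'' direction is complete, since the upper bounds are as you say and the lower bound follows from the breadth-$\le 2$ characterizations (breadth $\le 2$ forces $\dim L'\le 3$, and $\dim L'=3$ with breadth $2$ forces $\dim(L/Z(L))=3$), all of which are isoclinism invariants; and the easy cases of ``only if'' ($\dim L'=3$, or $\dim L'\ge 4$ with $\dim(L/Z(L))=4$) land in (a) and (b) correctly.

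The genuine gap is the one you flag yourself, and flagging it is not closing it. In the regime $\dim L'\ge 4$, $\dim(L/Z(L))\ge 5$ you must prove two things: that $\dim L'=4$, and that there exists a one-dimensional central ideal $I$ with $\dim\left((L/I)/Z(L/I)\right)=3$ --- in your mechanism, a central line contained in $[x,L]$ for every breadth-$3$ element $x$. You offer no argument for this, only the expectation that the machinery of Section~\ref{camina_Lie_algebra} will supply it. That expectation is misplaced: Theorem~\ref{finite_field_Camina_Lie_algebra} and the Chevalley--Warning bound $k_{sks}(n)\le n/2$ concern subspaces of skew-symmetric matrices all of whose \emph{nonzero elements are invertible} (a full-rank condition), whereas your hypothesis is that every contraction $a_L(\bar x,\cdot)$ has rank \emph{at most} $3$, i.e.\ that $\dim\,\mathrm{span}\{X_1c,\dots,X_mc\}\le 3$ for every coordinate vector $c$ --- a bounded-rank condition on a pencil of skew forms. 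These are different linear-algebra problems, and the second one, together with the nilpotency-class-$3$ complications (there $[x,L]\not\subseteq Z(L)$, so ``the common line is central'' is not bookkeeping), is precisely what occupies the bulk of \cite{swk}. As it stands, your proposal proves the easy half and the easy cases of the hard half, and leaves the theorem's core as a conjecture with a pointer to the wrong tool.
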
 
\noindent We begin with ruling out the possibility $(c)$ in the above theorem.
\begin{lemma}\label{main_lemma_3}
Let $L$ be a finite dimensional nilpotent Lie algebra over $\mathbb F_q$ of breadth type $(0, 3)$. 
Then, one of following holds: 
\begin{enumerate}[label=(\alph*)]
\item $\dim(L') = 3$ and $\dim(L/Z(L)) \geq 4$.
\item $\dim(L/Z(L)) = 4$ and $\dim(L') \geq 4$.
\end{enumerate}
\end{lemma}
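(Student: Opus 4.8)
The plan is to invoke Theorem~\ref{breadth_3_characteization}. Since breadth type $(0,3)$ in particular means $b(L)=3$, that theorem applies and $L$ satisfies one of its three conditions (a), (b), (c). Conditions (a) and (b) are precisely the two conclusions asserted in the present lemma, so the whole task reduces to excluding condition (c): I must show that an algebra of breadth type $(0,3)$ cannot possess a one-dimensional central ideal $I$ with $\dim\big((L/I)/Z(L/I)\big)=3$.

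I would argue this by contradiction, and the crucial device is the full preimage in $L$ of the centre of the quotient. Suppose such an $I=\langle w\rangle$ exists, and set $W=\{x\in L : [x,L]\subseteq I\}$; then by definition $Z(L/I)=W/I$, while $I\subseteq Z(L)\subseteq W$. The key observation is that every $x\in W$ has $[x,L]\subseteq I$, whence $b(x)=\dim[x,L]\leq\dim I=1$. But the breadths occurring in $L$ are only $0$ and $3$, so $b(x)\leq 1$ forces $b(x)=0$, i.e. $x\in Z(L)$. Combined with $Z(L)\subseteq W$ this yields $W=Z(L)$.

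Feeding this back identifies the centre of the quotient completely: $Z(L/I)=W/I=Z(L)/I$, and therefore
\[
\dim\big((L/I)/Z(L/I)\big)=\dim(L/I)-\dim\big(Z(L)/I\big)=\dim L-\dim Z(L)=\dim(L/Z(L)).
\]
Hence condition (c) would force $\dim(L/Z(L))=3$, contradicting the bound $\dim(L/Z(L))>m_r=3$ supplied by Lemma~\ref{lemma2}. This rules out (c), so $L$ must satisfy (a) or (b), which is the assertion of the lemma. I expect the one genuinely non-formal point to be the identification $Z(L/I)=Z(L)/I$: it fails for a generic central ideal and holds here only because the $(0,3)$ hypothesis collapses ``breadth at most $1$'' to ``breadth $0$''. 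It is worth noting that the argument never uses the extra hypothesis $\dim L'=4$ from (c), so it in fact forbids the central-ideal condition of (c) for every breadth type $(0,3)$ algebra.
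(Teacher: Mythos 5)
Your proposal is correct and follows essentially the same route as the paper: invoke Theorem~\ref{breadth_3_characteization}, rule out its case (c) by showing $Z(L/I)=Z(L)/I$, and derive a contradiction with the bound $\dim(L/Z(L))>3$ from Lemma~\ref{lemma2}. In fact your argument via the preimage $W=\{x\in L : [x,L]\subseteq I\}$ supplies the justification for the identification $Z(L/I)=Z(L)/I$, which the paper asserts with only the phrase ``it follows that,'' so your write-up is a slightly more complete version of the same proof.
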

\begin{proof} 
Essentially, we need to show that the case $(c)$ of the Theorem~\ref{breadth_3_characteization} 
does not hold when breadth type is $(0, 3)$. Suppose that there exists a central ideal $I$ of $L$ 
such that $\dim\left( (L/I)/Z(L/I)\right) = 3$. Then, since $L$ is of breadth type $(0, 3)$, it 
follows that $Z(L/I) = Z(L)/I$. Thus, we have 
$$3 = \dim \left( (L/I) / Z(L/I) \right) = \dim \left( (L/I) / Z(L)/I \right) = \dim (L/Z(L)).$$ 
However, from Lemma~\ref{lemma2}, the co-dimension of $Z(L)$ must be greater than $3$, which gives a 
contradiction.
\end{proof}
\begin{lemma}\label{lemma1}
Let $L$ be a Lie algebra and $x,y,z \in L$ such that $[x,z], [y,z] \in Z(L)$. Then, $[[x, y], z] = 
0$.
\end{lemma}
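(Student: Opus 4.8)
The plan is to use the Jacobi identity, which is the only structural tool available and which is perfectly suited to a statement relating the iterated brackets $[[x,y],z]$, $[[y,z],x]$, and $[[z,x],y]$. Recall that the Jacobi identity gives
\[
[[x,y],z] + [[y,z],x] + [[z,x],y] = 0.
\]
My strategy is to show that the last two terms vanish using the centrality hypotheses, so that the first term is forced to be zero.

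First I would observe that $[y,z] \in Z(L)$ by hypothesis, and since any central element brackets trivially with every element of $L$, we get $[[y,z],x] = 0$. For the remaining term, the key (if minor) point is to note that $[z,x] = -[x,z]$; because $[x,z] \in Z(L)$ and the center $Z(L)$ is a linear subspace, it is closed under scalar multiples, so $[z,x]$ is central as well. Hence $[[z,x],y] = 0$ too. Substituting both vanishing terms back into the Jacobi identity immediately yields $[[x,y],z] = 0$, completing the argument.

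There is no genuine obstacle here: the statement is a one-line consequence of the Jacobi identity once one records that the center is a subspace (so that $[z,x]$ inherits centrality from $[x,z]$) and that central elements commute with everything. I would simply present the Jacobi identity, cancel the two central terms, and conclude.
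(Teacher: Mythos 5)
Your proof is correct: the Jacobi identity $[[x,y],z]+[[y,z],x]+[[z,x],y]=0$ together with the centrality of $[y,z]$ and of $[z,x]=-[x,z]$ immediately gives $[[x,y],z]=0$. The paper states this lemma without proof, and your argument is precisely the intended one-line justification.
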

\noindent Now, we are ready to give a proof of 
Theorem~\ref{breadth_type_(0,3)_classification_odd_char_finite_fields}.

\begin{proof}[\bf{Proof of Theorem~\ref{breadth_type_(0,3)_classification_odd_char_finite_fields}}]
We have $Z(L) \subseteq L'$ since $L$ is a stem Lie algebra. We know that $L$ can be of nilpotency 
class $2$ or $3$ (see~\cite{bi} Theorem B). We first show that the case when $L$ is of nilpotency class $3$ can not 
occur. On contrary, let us assume that nilpotency class of $L$ is $3$. Combined with the fact that 
$L$ is a stem Lie algebra, we have $Z(L) \subsetneq L'$. For $x\in L' \setminus Z(L)$, we have 
$[x, L]\subseteq Z(L)$. Thus, $Z(L)$ is of dimension at least $3$, and consequently $L'$ is of 
dimension at least $4$. Therefore, by Lemma~\ref{main_lemma_3}, we have $\dim(L/Z(L)) = 4$. Since 
$L$ is of nilpotency class $3$, $L'$ is Abelian. So, $\dim(L/L')$ has to be at least $3$. By 
earlier observation, we have $\dim(L/L') = 3$ and $\dim(L'/Z(L)) = 1$. Thus, $L$ is generated by $3$ 
elements, say $x, y, z$. Without loss of generality, we can assume that $L' = 
\text{span}\{Z(L), [x, y]\}$. Thus, there exists $z_1, z_2 \in Z(L)$ and scalars $t_1, t_2$ such 
that $[x,z] = t_1[x,y]+z_1, [y,z] = t_2[x,y]+z_2$. Now, by changing our generators suitably (in particular, replacing $z$ by $z+ t_2 x - 
t_1 y$), we have 
$$ L = \langle x,\; y,\; z \rangle,\;\; L' = \text{span}\left\{ Z(L), [x,y]\right\},\;\; 
\text{and}\;\; [x,z],\; [y,z] \in Z(L).$$
Now by Lemma \ref{lemma1}, we have $[[x,y],z] = 0$ and consequently $\langle L',z 
\rangle \subseteq C_L ([x, y])$. Since co-dimension of $\langle L',z \rangle$ in $L$ is $2$, it 
contradicts the hypothesis that $L$ is of breadth type $(0, 3)$. Thus, $L$ can not be of nilpotency 
class $3$.

Now we assume that $L$ is of nilpotency class $2$. Since $L$ is a stem Lie algebra, we have $Z(L) = 
L'$. By Lemma~\ref{main_lemma_3}, we have one of following two possibilities (i) $\dim (L') = 3$ and 
(ii) $\dim (L/L') = 4$. In the case (i), $L$ is a Camina Lie algebra (see Lemma~\ref{lemma12}). In the case (ii), $L$ is 
minimally generated by $4$ elements. We also have $4 \leq \dim(L') \leq 6$. Thus, using 
Corollary~\ref{main_corr_1} and Lemma~\ref{main_lemma_2}, the proof is complete.
\end{proof}

%%%%%%%%%%%%%%%%%%%%%%%%%%%%%%%%%%%%%%%%%%
\section{$2$-step $4$-generated nilpotent Lie algebras over other fields}

Our analysis in the previous section can give results about $2$-step $4$-generated nilpotent Lie algebra of width type $(0, 3)$ over a finite fields of even characteristics, $\mathbb C$ and $\mathbb R$. We record the same in this section.

%%%%%%%%%%
\subsection{Nilpotent Lie algebras over $\mathbb F_q$ of even characteristics}
Let $k=\mathbb{F}_{2^n}$ be the finite field of degree $n$ over $\mathbb{F}_2$. The trace of $x$ is sum of all Galois conjugates of $x$ in $k$ over $\mathbb{F}_2$ given by $Tr(x)=x + x^2 + x^4 + \cdots + x^{2^{n-1}}$. Since, $Tr(x)\in \mathbb{F}_2$ hence $Tr(x)=0$ or $1$. We require the following result. 
\begin{proposition}\label{irreducibilty_criterion_quadratic_polynomial_char2_finite_fields}
Let $f(t) = at^2+ bt + c$ be a polynomial over $k=\mathbb{F}_{2^n}$. Then, $f(t)$ is irreducible if and only if $Tr\left(\frac{ac}{b^2}\right) = 1$. Further, given any $z\in k$ with $Tr(z) = 1$, there exists $m, r \neq 0 \in k$ and $s\in k$ such that $mf(r t + s) = t^2 + t + z$.
\end{proposition}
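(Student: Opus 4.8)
The plan is to prove the two parts of Proposition~\ref{irreducibilty_criterion_quadratic_polynomial_char2_finite_fields} separately, using the additive structure of the trace map in characteristic $2$. For the first part, I would reduce the general quadratic $f(t) = at^2 + bt + c$ to the normal form $g(u) = u^2 + u + z$ by completing the square (additively). Since $a \neq 0$ and $b \neq 0$ for the statement to be meaningful, I substitute $t = (b/a)u$ so that $a t^2 + bt = (b^2/a)u^2 + (b^2/a)u = (b^2/a)(u^2 + u)$; dividing the resulting equation $f = 0$ by $b^2/a$ yields $u^2 + u + (ac/b^2) = 0$. Thus $f$ is irreducible over $k$ if and only if $u^2 + u + z = 0$ has no root in $k$, where $z = ac/b^2$.

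The crux is then the classical Artin--Schreier criterion: the polynomial $u^2 + u + z$ has a root in $\mathbb{F}_{2^n}$ if and only if $\operatorname{Tr}(z) = 0$. I would prove this using the additivity of the trace together with the observation that $\operatorname{Tr}(w^2) = \operatorname{Tr}(w)$ for all $w$ (because squaring is the Frobenius automorphism and the trace is Frobenius-invariant). Concretely, if $u^2 + u = z$ has a solution $u \in k$, then applying the trace gives $\operatorname{Tr}(z) = \operatorname{Tr}(u^2) + \operatorname{Tr}(u) = \operatorname{Tr}(u) + \operatorname{Tr}(u) = 0$. For the converse, the map $u \mapsto u^2 + u$ is $\mathbb{F}_2$-linear with kernel $\mathbb{F}_2$, hence its image is a subspace of dimension $n-1$; since $\ker(\operatorname{Tr})$ is also an $\mathbb{F}_2$-subspace of dimension $n-1$ and the image is visibly contained in it, the two coincide, so every $z$ with $\operatorname{Tr}(z)=0$ lies in the image. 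Combining, $f$ is irreducible iff $u^2+u+z$ has no root iff $\operatorname{Tr}(ac/b^2) = 1$.

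For the second part, given $z \in k$ with $\operatorname{Tr}(z) = 1$, I must exhibit $m, r \neq 0$ and $s$ with $m f(rt+s) = t^2 + t + z$. Here $f(t) = at^2 + bt + c$ is irreducible by the first part, so $\operatorname{Tr}(ac/b^2) = 1$. The idea is to run the completing-the-square substitution in reverse. Expanding $f(rt + s) = ar^2 t^2 + (\text{linear in } t) + f(s)$, I want $m \cdot ar^2 = 1$ (forcing the leading coefficient) and $m \cdot (\text{coefficient of } t) = 1$. The coefficient of $t$ in $f(rt+s)$ is $r(2as + b) = rb$ in characteristic $2$, so I need $m a r^2 = m r b = 1$, which forces $ar = b$, i.e. $r = b/a$, and then $m = 1/(ar b) = a/b^2$. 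With these choices the constant term becomes $m f(s) = (a/b^2) f(s)$, and I would check that choosing $s$ appropriately (or simply $s = 0$, giving constant term $ac/b^2 = z$ only if $z$ equals that value, otherwise adjusting $s$) produces exactly $z$; since the map $s \mapsto (a/b^2) f(s)$ takes the value $ac/b^2$ at $s=0$ and differs from $z$ by an element of the Artin--Schreier image, solvability is guaranteed by the trace computation.

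The main obstacle I anticipate is the bookkeeping in the second part: one must verify that the constant term can be tuned to the \emph{prescribed} $z$ (not merely to some element of the correct trace class) by a judicious choice of $s$, and this hinges on the surjectivity statement from the first part applied to $z - (ac/b^2)$, whose trace is $\operatorname{Tr}(z) - \operatorname{Tr}(ac/b^2) = 1 - 1 = 0$. Once that reduction is in place the existence of $s$ is immediate, so the entire proposition rests on the Artin--Schreier dichotomy established via the dimension count for the $\mathbb{F}_2$-linear map $u \mapsto u^2 + u$.
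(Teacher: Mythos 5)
Your proposal is correct. The paper in fact offers no argument here---it dismisses the proposition with ``Proof of this is an easy exercise''---and your Artin--Schreier argument (reduce $f$ to $u^2+u+z$ with $z=ac/b^2$ via the substitution $t=(b/a)u$, then identify the image of the $\mathbb{F}_2$-linear map $u\mapsto u^2+u$ with $\ker(\mathrm{Tr})$ by a dimension count) is the standard proof the authors presumably intended, and your part-two bookkeeping ($r=b/a$, $m=a/b^2$, then solving $(as/b)^2+(as/b)=z+ac/b^2$ for $s$) is sound. One point worth making explicit, which you correctly detected but the paper's statement glosses over: the ``Further'' clause is only true when $f$ is itself irreducible (equivalently $\mathrm{Tr}(ac/b^2)=1$, which in particular forces $b\neq 0$), since scaling and an invertible affine substitution preserve irreducibility; your reduction hinges on $\mathrm{Tr}\bigl(z+ac/b^2\bigr)=0$, which is exactly where that hypothesis enters.
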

\noindent Proof of this is an easy exercise. Now, we are ready to look at $4$-generated Lie algebras.
\begin{lemma}\label{main_lemma_2_for_char2}
Let $k=\mathbb{F}_{2^n}$ and $L$ be a $4$-generated $2$-step nilpotent Lie algebra over $k$ of dimension $8$. Consider the Lie algebra $\mathcal{L}_3$ over $k$ with generators $x_1, x_2, x_3, x_4$. Then, $L$ is of breadth type $(0, 3)$ if and only if $L$ is isomorphic to $\mathcal{L}_3/J$, where $J = \textrm{span}\{[x_1,x_2] + [x_3,x_4],\; r[x_1,x_3] + [x_2,x_4] + [x_3,x_4]\}$ with $r\in k$ and $Tr(r)=1$. If $n$ is odd, $r$ can be chosen to be $1$.
\end{lemma}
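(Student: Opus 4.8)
The plan is to mirror the proof of Lemma~\ref{main_lemma_2}, replacing the square/non-square dichotomy of odd characteristic by the additive Artin--Schreier dichotomy furnished by Proposition~\ref{irreducibilty_criterion_quadratic_polynomial_char2_finite_fields}. First I would observe that the reduction of an arbitrary two-dimensional central ideal $J$ of $\mathcal L_3$ to the normal form
\[
J_3=\mathrm{span}\{[x_1,x_2]+[x_3,x_4],\ [x_1,x_3]+\alpha[x_2,x_4]+\beta[x_3,x_4]\},\qquad \alpha\neq 0,
\]
carries over verbatim to $k=\mathbb F_{2^n}$: the automorphisms $\phi_1,\dots,\phi_5$ used there involve only additions, scalings, and inversions of coefficients already shown to be nonzero, and never a division by $2$. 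By Proposition~\ref{quotient_breadth_type_(0,m)} it then suffices to decide when $J_3$ contains no nonzero Lie bracket, and to normalize the admissible $J_3$.

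Next I would run the explicit bracket computation in characteristic $2$. Taking a candidate bracket $a=x_1+lx_4$, $b=x_2+tx_3$ (the reduction of a general pair to this shape being exactly as in Lemma~\ref{main_lemma_2}) gives $[a,b]=[x_1,x_2]+t[x_1,x_3]+l[x_2,x_4]+lt[x_3,x_4]$, and requiring $[a,b]\in J_3$ forces $l=t\alpha$ and $lt=1+t\beta$, hence $\alpha t^2+\beta t+1=0$. Thus $J_3$ contains a nonzero bracket iff this quadratic has a root in $k$. If $\beta=0$ then $t^2=\alpha^{-1}$ is always solvable, the Frobenius being bijective on $k$, so $\beta=0$ never yields breadth type $(0,3)$ and is discarded; if $\beta\neq 0$, then by Proposition~\ref{irreducibilty_criterion_quadratic_polynomial_char2_finite_fields} the polynomial is irreducible iff $Tr(\alpha/\beta^2)=1$. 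Hence $\mathcal L_3/J$ is of breadth type $(0,3)$ precisely when $\beta\neq 0$ and $Tr(\alpha/\beta^2)=1$. The same computation applied to the target ideal $\mathrm{span}\{[x_1,x_2]+[x_3,x_4],\ r[x_1,x_3]+[x_2,x_4]+[x_3,x_4]\}$ produces instead $t^2+t+r=0$, which has no root iff $Tr(r)=1$; this already gives the ``if'' direction.

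For the ``only if'' direction I must normalize an admissible $J_3$ to the stated form, and here the odd-characteristic automorphism $\phi_6$ (which used $t=\beta/2$) is unavailable. The plan is to take $\phi_6\colon x_1\mapsto lx_1+tx_4$, $x_3\mapsto lx_3+tx_2$, fixing $x_2,x_4$, with $l\neq 0$. A short computation gives $\phi_6(u_1)=l\,u_1$ (the two cross terms cancel since $2=0$) and $\phi_6(u_2)+t\,\phi_6(u_1)=l^2[x_1,x_3]+(t^2+\beta t+\alpha)[x_2,x_4]+l\beta[x_3,x_4]$. Setting $l=r\beta$, the demand that the $[x_2,x_4]$- and $[x_3,x_4]$-coefficients coincide becomes $t^2+\beta t+(\alpha+r\beta^2)=0$; once $t$ solves this, rescaling turns the second generator into $r[x_1,x_3]+[x_2,x_4]+[x_3,x_4]$. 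The decisive point is that this quadratic is solvable for $t\in k$ iff $Tr\!\big((\alpha+r\beta^2)/\beta^2\big)=Tr(\alpha/\beta^2)+Tr(r)=0$, which, given $Tr(\alpha/\beta^2)=1$, holds exactly when $Tr(r)=1$. Hence every prescribed $r$ with $Tr(r)=1$ is realizable, and all such choices yield isomorphic algebras; since a $4$-generated $2$-step nilpotent Lie algebra of dimension $8$ is a quotient of $\mathcal L_3$ by a two-dimensional central ideal, the classification follows. For $n$ odd one may take $r=1$, as $Tr(1)=n\bmod 2=1$.

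I expect the principal obstacle to be this characteristic-$2$ normalization: lacking completion of the square, one must choose $\phi_6$ so that solvability of $t^2+\beta t+(\alpha+r\beta^2)=0$ is governed by the very trace $Tr(\alpha/\beta^2)$ that encodes breadth type $(0,3)$. Making these two appearances of the trace agree, while checking that the linear part of $\phi_6$ stays invertible (its determinant is $l^2=(r\beta)^2\neq 0$), is the delicate bookkeeping; the remaining steps are formal analogues of the odd-characteristic argument.
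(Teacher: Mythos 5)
Your proposal is correct, and its first two stages coincide with the paper's: the paper likewise reduces $J$ to $\mathrm{span}\{[x_1,x_2]+[x_3,x_4],\ [x_1,x_3]+\alpha[x_2,x_4]+\beta[x_3,x_4]\}$ by rerunning the automorphisms of Lemma~\ref{main_lemma_2}, and likewise characterizes breadth type $(0,3)$ by irreducibility of the quadratic $\alpha t^2+\beta t+1$. Where you genuinely diverge is the normalization step, which is the only part of this lemma requiring new work in characteristic $2$. The paper invokes the second (``Further'') clause of Proposition~\ref{irreducibilty_criterion_quadratic_polynomial_char2_finite_fields}: it picks $m, r\neq 0, s$ with $mf(rt+s)=t^2+t+z$, reads off $m\alpha r^2=m\beta r=1$ and $m(\alpha s^2+\beta s+1)=z$, and feeds these into the automorphism $x_2\mapsto rx_2+sx_3$, $x_4\mapsto rx_4+sx_1$ (fixing $x_1,x_3$), which carries $J$ onto $\mathrm{span}\{[x_1,x_2]+[x_3,x_4],\ z[x_1,x_3]+[x_2,x_4]+[x_3,x_4]\}$. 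You instead transplant the odd-characteristic $\phi_6$ (acting on $x_1,x_3$, fixing $x_2,x_4$) and obtain the needed parameter by solving $t^2+\beta t+(\alpha+r\beta^2)=0$, whose solvability is exactly additivity of the trace: $Tr\bigl((\alpha+r\beta^2)/\beta^2\bigr)=Tr(\alpha/\beta^2)+Tr(r)=1+1=0$. The two mechanisms are equivalent---unwinding the paper's substitution clause reduces to the same trace-additivity computation---but yours needs only the irreducibility criterion (the first clause of the Proposition, the second clause being left as an exercise in the paper) and makes transparent where the hypothesis $Tr(r)=1$ enters; the paper's packaging keeps the Lie-algebra computation shorter at the cost of a black box about quadratic polynomials. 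Your explicit disposal of the case $\beta=0$ (via surjectivity of the Frobenius) and the invertibility check $\det=l^2=(r\beta)^2\neq 0$ are both correct and address points the paper passes over silently; I verified your displayed formulas for $\phi_6$ applied to the two generators and the final rescaling by $(r\beta^2)^{-1}$, and they are accurate.
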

\begin{proof}
Suppose $J$ is a two dimensional central ideal of $\mathcal L_3$ such that $\mathcal L_3/J$ is of breadth type $(0,3)$. Then, following the proof of Lemma~\ref{main_lemma_2}, we can show that $J$ is of the form 
$$J=\langle [x_1,x_2]+[x_3,x_4], \; [x_1,x_3] + \alpha[x_2,x_4] + \beta [x_3,x_4]\rangle$$
where $x_1, x_2, x_3, x_4$ minimally generate $\mathcal L_3$ and $\alpha, \beta$ are scalars. Once again, proceeding along the same line as in the proof of Lemma~\ref{main_lemma_2}, we can prove that $\alpha,\beta$ can be chosen in such a way that the quadratic equation $\alpha t^2+\beta t + 1$ is irreducible over $k$. Now, using Proposition~\ref{irreducibilty_criterion_quadratic_polynomial_char2_finite_fields}, for any given $z\in k$ with $Tr(z) = 1$, there exists $m, r \neq 0$ in $k$ and $s \in \mathbb{F}$ such that $m f(rt + s) = t^2 + t + z$. This gives $m \beta r = m \alpha r^2 = 1$ and $m (\alpha s^2+\beta s+1)=z$. Now, once again using the technique as in the proof of Lemma~\ref{main_lemma_2} we reduce $J$ to the desired form. For this, let $\phi_1$ be the automorphism of $\mathcal L_3$ induced by $x_2\mapsto r x_2 + s x_3, x_4 \mapsto r x_4 + s x_1$ and $x_i\mapsto x_i$ for $i=1, 3$. Now, we see that $\phi_1$ takes $J$ to $J_1$ given by,
\begin{eqnarray*}
J_1 &=& \langle r([x_1,x_2] + [x_3,x_4]), \; [x_1,x_3] + \alpha [rx_2+sx_3, rx_4 + sx_1] + \beta [x_3, rx_4 + sx_1]\rangle \\ 
&=& \langle [x_1, x_2] + [x_3, x_4], \; (\alpha s^2+ \beta s +1)[x_1, x_3] + \alpha r^2 [x_2, x_4] + \beta r[x_3, x_4]\rangle\\
&=& \langle [x_1, x_2] + [x_3, x_4], \; p^{-1}z [x_1, x_3] + p^{-1}[x_2, x_4] + p^{-1}[x_3, x_4]\rangle\\
&=& \langle [x_1,x_2] + [x_3,x_4],\; z[x_1,x_3] + [x_2,x_4] + [x_3,x_4] \rangle.
\end{eqnarray*} 
	This proves our result. 
\end{proof}
We note that, in the Lemma~\ref{main_lemma_2}, $r$ could be any non-square in the field $k$. This means that two different choices of non-squares in the field $k$ yields isomorphic quotients. In the present case, Lemma~\ref{main_lemma_2_for_char2}, $r$ could be anything with $Tr(r)=1$ and the Lie algebras $\mathcal{L}_3/J$ will be isomorphic. Now we have the following,
\begin{theorem}\label{four_generated_nilpotent_Lie_algebra_(0,3)_even_characteristic}
Let $L$ be a $2$-step $4$-generated nilpotent Lie algebra over $k=\mathbb{F}_{2^n}$. Then, $L$ is of breadth type $(0, 3)$ if and only if $L$ is isomorphic to one of the following:
\begin{itemize}
\item[(i)] $\mathcal{L}_3$ generated by four elements $x_1, x_2, x_3, x_4$.
\item[(ii)] the quotient $\mathcal{L}_3 / I$, where $I$ is the central ideal of dimension $1$ given by $I = \textrm{span}\{[x_1, x_2] + [x_3,x_4] \}$,
\item[(iii)] the quotient $\mathcal{L}_3 / J$, where $J$ is the central ideal of dimension $2$ given by $J = \textrm{span}\{ [x_1,x_2] + [x_3,x_4],\; r[x_1,x_3] + [x_2,x_4] + [x_3,x_4] \}$, where $r\in \mathbb{F}_{2^n}$ with $Tr(r)=1$. If $n$ is odd, $r$ can be chosen to be $1$.
	\end{itemize}
\end{theorem}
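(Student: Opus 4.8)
The plan is to reduce the classification to a question about subspaces of $SKS_4(k)$ and then feed the resulting ideals of dimension $1$ and $2$ into the normal-form results already established. First I would apply Proposition~\ref{quotient_breadth_type_(0,m)} with $m=3$: since $L$ is $2$-step nilpotent and minimally generated by four elements, $L$ is of breadth type $(0,3)$ if and only if $L\cong\mathcal{L}_3/I$ for some proper central ideal $I\subsetneq Z(\mathcal{L}_3)$ containing no non-zero Lie bracket. I would then identify $Z(\mathcal{L}_3)=\mathcal{L}_3'$ with $SKS_4(k)$ by sending $\sum_{i<j}c_{ij}[x_i,x_j]$ to the skew-symmetric matrix with entries $c_{ij}$, exactly as in the structure-constant set-up of Theorem~\ref{structure_constant_Camina_algebra}. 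Under this identification a decomposable bracket $[a,b]$ corresponds to a skew-symmetric matrix of rank $2$, and conversely every non-zero rank-$2$ skew matrix arises this way; since a non-zero $4\times 4$ skew-symmetric matrix has rank either $2$ or $4$, the requirement that $I$ contain no non-zero bracket is precisely the requirement that $I$ be a rank-$4$ subspace of $SKS_4(k)$.

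Next I would bound the dimension of $I$. Over $\mathbb{F}_{2^n}$ the Chevalley--Warning estimate recorded in Section~\ref{camina_Lie_algebra} gives $k_{sks}(4)\le 4/2=2$, so any rank-$4$ subspace of $SKS_4(\mathbb{F}_{2^n})$ has dimension at most $2$; thus $\dim I\in\{0,1,2\}$. It is worth noting that since $I\subseteq\mathcal{L}_3'$ we have $L/L'\cong\mathcal{L}_3/\mathcal{L}_3'$, so $L$ remains minimally generated by four elements and genuinely $2$-step in every case, and the three admissible values of $\dim I$ (equivalently $\dim L'=6-\dim I\in\{6,5,4\}$) will correspond to the three families in the statement.

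Finally I would dispatch the three cases and record the converse. If $\dim I=0$ then $L\cong\mathcal{L}_3$, which is of breadth type $(0,3)$ by Proposition~\ref{Lmtype0m}; this is (i). If $\dim I=1$ then Corollary~\ref{main_corr_1}, which holds over an arbitrary field and hence in characteristic $2$, normalizes $I$ to $\langle[x_1,x_2]+[x_3,x_4]\rangle$, giving (ii). If $\dim I=2$ then Lemma~\ref{main_lemma_2_for_char2} normalizes $J$ to $\langle[x_1,x_2]+[x_3,x_4],\ r[x_1,x_3]+[x_2,x_4]+[x_3,x_4]\rangle$ with $Tr(r)=1$ (and $r=1$ when $n$ is odd), giving (iii); the converse in each case is supplied by these same results, which verify that the displayed ideals contain no non-zero bracket. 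Since all the delicate work lives in the two normal-form results and in the estimate $k_{sks}(4)\le 2$, the remaining obstacle is conceptual rather than computational: one must make sure the dichotomy special to $4\times 4$ skew-symmetric matrices, namely that every non-zero element has rank $2$ or $4$, correctly translates ``every non-central element keeps breadth $3$'' into ``$I$ is a rank-$4$ subspace'', and that the finite-field dimension bound is the exact ceiling forbidding any fourth family.
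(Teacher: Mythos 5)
Your proposal is correct and its skeleton coincides with the paper's proof: reduce to $L\cong\mathcal{L}_3/I$ via Proposition~\ref{quotient_breadth_type_(0,m)}, then handle $\dim I=0,1,2$ by Proposition~\ref{Lmtype0m}, Corollary~\ref{main_corr_1} (valid over any field), and Lemma~\ref{main_lemma_2_for_char2}, respectively. Where you genuinely diverge is in excluding $\dim I\ge 3$. The paper splits this into two cases: for $\dim I=3$ it observes that $\mathcal{L}_3/I$ of breadth type $(0,3)$ would be a $4$-generated $2$-step Camina Lie algebra, contradicting Theorem~\ref{finite_field_Camina_Lie_algebra} (which forces at least $2m=6$ generators over a finite field), and for $\dim I\ge 4$ it notes that $\dim L'\le 2$ already caps the breadth at $2$. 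You instead translate the no-nonzero-bracket condition of Proposition~\ref{quotient_breadth_type_(0,m)} into the statement that $I$ is a rank-$4$ subspace of $SKS_4(k)$ --- using that nonzero brackets $[a,b]$ correspond exactly to rank-$2$ alternating matrices, and that a nonzero $4\times 4$ alternating matrix has rank $2$ or $4$ --- and then invoke the bound $k_{sks}(4)\le 2$ to kill every $\dim I\ge 3$ at once. This is a genuine streamlining of the presentation but not of the substance: Theorem~\ref{finite_field_Camina_Lie_algebra} is itself deduced (through Theorem~\ref{structure_constant_Camina_algebra}) from the very same Kantor/Chevalley--Warning bound $k_{sks}(n)\le n/2$, so your argument simply unwraps the paper's Camina detour. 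What your formulation buys is uniformity --- one dichotomy disposes of all ideal dimensions simultaneously --- and it makes transparent that $k_{sks}(4)\le 2$ is precisely the ceiling forbidding a fourth family; what the paper's route buys is that the dimension-$3$ exclusion is quoted as a reusable structural fact about Camina Lie algebras rather than re-derived inside the theorem.
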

\begin{proof}
Since $L$ is $2$-step nilpotent Lie algebra minimally generated by $4$ elements, $L$ is isomorphic 
to $\mathcal L_3/I$ where $I$ is a central ideal of $\mathcal L_3$. Since, Lemma~\ref{main_lemma_1} works over any field we get the Lie algebra in $(ii)$. When $I$ is a central ideal of dimension $2$, we use Lemma~\ref{main_lemma_2_for_char2}. Thus, 
we get the Lie algebra in $(iii)$. Now we consider the case if the central ideal $I$ is of dimension $\geq 3$. If $I$ is of dimension greater than four $L_3/I$ will not be of breadth type $(0,3)$. Thus,  we are left with the case when $L$ is $L_3/I$ where $I$ is of dimension $3$. In this case one can check that $L_3/I$ is a $4$-generated $2$-step nilpotent Camina Lie algebra over $\mathbb{F}_{2^n}$. Now, by Theorem~\ref{finite_field_Camina_Lie_algebra}, any $2$-step nilpotent Camina Lie algebra must be minimally generated by $6$ elements. Thus, we cannot take $I$ to be of dimension $3$. This completes the proof.
\end{proof}
\noindent Note that we have only classified $4$-generated Lie algebras in this case. The more general Lie algebras in even characteristics are more difficult to deal with.

%%%%%%%%%%%%%%%%%%%%%%%%%%
\subsection{Nilpotent Lie algebras over some other fields}
Now we work over $\mathbb C$ and $\mathbb R$.
\begin{theorem}\label{four_generated_nilpotent_Lie_algebra_(0,3)_complex}
Let $L$ be a $2$-step nilpotent $\mathbb C$-Lie algebra minimally generated by $4$ elements. Then $L$ is of breadth type $(0,3)$ if and only if $L$ is isomorphic to one of the following:
\begin{itemize}
\item[(i)] the Lie algebra $\mathcal{L}_3$ generated by four elements $x_1, x_2, x_3, x_4$.
\item[(ii)] the quotient $\mathcal{L}_3 / I$, where $I$ is the central ideal of dimension $1$ given by $I = \textrm{span}\{ [x_1, x_2] + [x_3, x_4]\}$.
\end{itemize}
\end{theorem}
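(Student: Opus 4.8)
The plan is to realize every such $L$ as a quotient of $\mathcal{L}_3$ and then to bound and classify the admissible central ideals using the results already proved for $\mathbb{C}$. Since $L$ is $2$-step nilpotent and minimally generated by $4$ elements, it is isomorphic to $\mathcal{L}_3/I$ for some central ideal $I\subseteq Z(\mathcal{L}_3)=\mathcal{L}_3'$, a space of dimension $6$. By Proposition~\ref{quotient_breadth_type_(0,m)} (with $m=3$), the quotient $\mathcal{L}_3/I$ has breadth type $(0,3)$ if and only if $I$ contains no non-zero Lie bracket. I would first observe that this condition already forces $L$ to be stem: if some $\bar v\in Z(L)\setminus L'$ existed, a lift $v\in\mathcal{L}_3\setminus\mathcal{L}_3'$ would satisfy $[v,\mathcal{L}_3]\subseteq I$, and since $v$ is non-central in $\mathcal{L}_3$ we have $\dim[v,\mathcal{L}_3]=3$ by Proposition~\ref{Lmtype0m}; this places a $3$-dimensional space of genuine Lie brackets inside $I$, contradicting the criterion. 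Hence $Z(L)=L'$ and $\dim L'=6-\dim I$.

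Next I would bound $\dim I$. By Lemma~\ref{lemma2}, a breadth type $(0,3)$ algebra satisfies $\dim\gamma_2(L)=\dim L'\geq 3$, so $\dim I\leq 3$, leaving the four cases $\dim I\in\{0,1,2,3\}$. The cases $\dim I=0$ and $\dim I=1$ are field-independent: they yield $L\cong\mathcal{L}_3$, which is case (i), and, by Corollary~\ref{main_corr_1}, $L\cong\mathcal{L}_3/\langle[x_1,x_2]+[x_3,x_4]\rangle$, which is case (ii). The whole content of the theorem lies in showing that the two remaining cases cannot occur over $\mathbb{C}$. For $\dim I=2$, i.e.\ $\dim L=8$, Corollary~\ref{main_corr_2} applies verbatim: since $\mathbb{C}$ has characteristic $\neq 2$ and every element is a square, there is no $4$-generated $2$-step nilpotent Lie algebra of dimension $8$ and breadth type $(0,3)$. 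For $\dim I=3$ we have $\dim L'=3$, and, $L$ being stem, Lemma~\ref{lemma12} identifies $L$ as a $2$-step nilpotent Camina Lie algebra with $\dim L'=3$; this contradicts Theorem~\ref{complex_Camina_Lie_algebra}, which forces $\dim L'=1$ for any such Camina algebra over an algebraically closed field.

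The converse direction is routine: $\mathcal{L}_3$ has breadth type $(0,3)$ by Proposition~\ref{Lmtype0m}, and the quotient in (ii) does as well by Corollary~\ref{main_corr_1}. The main obstacle is not any single calculation but rather isolating precisely where algebraic closedness is used, namely in excluding $\dim I=2$ and $\dim I=3$. These two exclusions are exactly what separates $\mathbb{C}$ from $\mathbb{R}$ and from the odd-characteristic finite fields, where the analogues of cases (iii) and (iv) of Theorem~\ref{breadth_type_(0,3)_classification_odd_char_finite_fields} genuinely appear; so the crux is to route the two forbidden cases through Corollary~\ref{main_corr_2} (square roots exist) and Theorem~\ref{complex_Camina_Lie_algebra} (Camina forces $\dim L'=1$), both of which fail over $\mathbb{R}$. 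I would take particular care over the stemness reduction in the first paragraph, since it is what licenses the application of Lemma~\ref{lemma12} in the $\dim I=3$ case.
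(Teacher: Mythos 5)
Your proof is correct and follows essentially the same route as the paper: realize $L$ as $\mathcal{L}_3/I$, bound $\dim I\leq 3$, and dispose of the cases via Corollary~\ref{main_corr_1} ($\dim I=1$), Corollary~\ref{main_corr_2} ($\dim I=2$), and Theorem~\ref{complex_Camina_Lie_algebra} ($\dim I=3$), which are exactly the three results the paper cites. Your explicit stemness argument (lifting a would-be central element to get a $3$-dimensional space of brackets inside $I$) is a welcome filling-in of a step the paper leaves as ``one can check,'' but it does not change the method.
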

\begin{proof}
The proof follows from arguments used in the previous theorem using Corollary~\ref{main_corr_1}, Corollary~\ref{main_corr_2}, and Theorem~\ref{complex_Camina_Lie_algebra}. 
\end{proof}
\begin{theorem}\label{four_generated_nilpotent_Lie_algebra_(0,3)_real}
Let $L$ be a $4$-generated $2$-step nilpotent $\mathbb R$-Lie algebra Then, $L$ is of breadth type $(0,3)$ if and only if $L$ is isomorphic to one of the following:
\begin{itemize}
\item[(i)] a $2$-step Camina Lie algebra $L$ with derived subalgebra $L'$ of dimension $3$. 
\item[(ii)] The Lie algebra $\mathcal{L}_3$ generated by four elements $x_1,x_2,x_3,x_4$.
\item[(iii)] The quotient $\mathcal{L}_3 / I$, where $I$ is the central ideal of dimension $1$ given by $I = \textrm{span}\{ [x_1, x_2] + [x_3, x_4]\}$,
\item[(iv)] The quotient $\mathcal{L}_3 / J$, where $J$ is the central ideal of dimension $2$ given by $J = \textrm{span} \{[x_1,x_2] + [x_3,x_4],\; [x_1,x_3] + t[x_2, x_4]\}$, where $t<0$.
	\end{itemize}
\end{theorem}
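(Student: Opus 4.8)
The plan is to follow the same template as the proofs of Theorem~\ref{four_generated_nilpotent_Lie_algebra_(0,3)_even_characteristic} and Theorem~\ref{four_generated_nilpotent_Lie_algebra_(0,3)_complex}, organising the argument according to the dimension of the central ideal. Since $L$ is $2$-step nilpotent and minimally generated by $4$ elements, I would write $L \cong \mathcal{L}_3/I$ for a central ideal $I \subsetneq Z(\mathcal{L}_3) = \mathcal{L}_3'$, where $\dim \mathcal{L}_3 = 10$ and $\dim \mathcal{L}_3' = 6$ by Proposition~\ref{Lmtype0m}. First I would note that breadth type $(0,3)$ forces $L$ to be stem: being $4$-generated gives $\dim(L/L') = 4$, so if $Z(L) \supsetneq L'$ then $\dim(L/Z(L)) \le 3$, contradicting $\dim(L/Z(L)) > 3$ from Lemma~\ref{lemma2}. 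Hence $Z(L) = L'$, and the same lemma gives $\dim(L') = 6 - \dim(I) \ge 3$, so $0 \le \dim(I) \le 3$, leaving exactly four cases.

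The three cases $\dim(I) \in \{0,1,2\}$ are handled just as over $\mathbb{F}_{2^n}$ and $\mathbb{C}$. When $\dim(I) = 0$ we get $L \cong \mathcal{L}_3$, which is (ii). When $\dim(I) = 1$, Corollary~\ref{main_corr_1} (valid over any field) yields (iii). When $\dim(I) = 2$, the algebra has dimension $8$ and $\mathrm{char}(\mathbb{R}) \ne 2$, so Lemma~\ref{main_lemma_2} applies and gives $L \cong \mathcal{L}_3/J$ with $J = \mathrm{span}\{[x_1,x_2]+[x_3,x_4],\,[x_1,x_3]+r[x_2,x_4]\}$ for a non-square $r \in \mathbb{R}^{\times}$; since the non-squares of $\mathbb{R}$ are precisely the negative reals, this is exactly (iv) with $t = r < 0$.

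The genuinely field-specific case, and the main point of the theorem, is $\dim(I) = 3$. Here $\dim(L') = 3$, and as $L$ is stem, Lemma~\ref{lemma12} tells us that $L$ is of breadth type $(0,3)$ if and only if $L$ is a $2$-step Camina Lie algebra with $\dim(L') = 3$, which is precisely (i). Unlike over $\mathbb{C}$, where Theorem~\ref{complex_Camina_Lie_algebra} forces $\dim(L') = 1$, and unlike over finite fields, where Theorem~\ref{finite_field_Camina_Lie_algebra} forces at least $6$ generators, such an algebra genuinely occurs over $\mathbb{R}$: by Theorem~\ref{Radon-Hurwitz_real_fields} we have $\mathbb{R}(4) = 4$, so Theorem~\ref{Radon-Hurwitz_skew_symmetric} gives $\mathbb{R}_{sks}(4) = 3$, and a concrete $4$-generated model is supplied by Example~\ref{real_Camina_four_generated}. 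This is the step I expect to be the crux: recognising that the Radon--Hurwitz bound, which eliminates the Camina possibility over $\mathbb{C}$ and over $\mathbb{F}_q$, is actually attained over $\mathbb{R}$, so case (i) must be retained (and left uncollapsed, as we obtain no finer classification of these Camina algebras).

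Finally, for the converse I would verify that each listed algebra has breadth type $(0,3)$: for (ii), (iii) and (iv) this is already recorded in Proposition~\ref{Lmtype0m}, Corollary~\ref{main_corr_1} and Lemma~\ref{main_lemma_2} respectively, while for (i) it is immediate from the observation in Section~\ref{camina_Lie_algebra} that a $2$-step Camina Lie algebra with $\dim(L') = 3$ is of breadth type $(0,3)$.
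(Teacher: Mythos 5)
Your proposal is correct and follows essentially the same route as the paper, whose proof is a one-line citation of Corollary~\ref{main_corr_1}, Lemma~\ref{main_lemma_2}, and Example~\ref{real_Camina_four_generated} together with the case analysis by ideal dimension already used for Theorems~\ref{four_generated_nilpotent_Lie_algebra_(0,3)_even_characteristic} and~\ref{four_generated_nilpotent_Lie_algebra_(0,3)_complex}. Your write-up simply makes explicit the steps the paper leaves implicit (stemness via Lemma~\ref{lemma2}, the reduction $\dim(I)\le 3$, and the identification of the $\dim(I)=3$ case with Camina algebras via Lemma~\ref{lemma12}).
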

\begin{proof}
Once again the proof of this simply follows from Corollary~\ref{main_corr_1}, Lemma~\ref{main_lemma_2}, and Example~\ref{real_Camina_four_generated}.
\end{proof}

In the previous Theorem, we see that there could be a $3$ dimensional central ideal $I$ of $\mathcal L_3$ such that $\mathcal L_3/I$ is of breadth type $(0, 3)$. This can be obtained over a larger class of fields as follows.
\begin{proposition}
Let $k$ be a field where $-1$ is not a sum of two squares. Then, there exists a $4$-generated $2$-step nilpotent Camina Lie algebra over $k$ of breadth type $(0,3)$.
\end{proposition}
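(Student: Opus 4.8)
The plan is to reduce the statement, via Theorem~\ref{structure_constant_Camina_algebra}, to an explicit linear-algebra construction. Indeed, a $4$-generated $2$-step nilpotent Camina Lie algebra over $k$ with $\dim(L')=3$ is exactly one of breadth type $(0,3)$, and by that theorem (with $n=4$, $m=3$) such an algebra exists if and only if $SKS_4(k)$ contains a rank-$4$ subspace of dimension $3$. So it suffices to exhibit three skew-symmetric $4\times 4$ matrices $X_1,X_2,X_3$ over $k$ such that every non-trivial combination $\alpha X_1+\beta X_2+\gamma X_3$ is invertible.

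First I would reuse the three matrices $X_1,X_2,X_3$ written down explicitly in Example~\ref{real_Camina_four_generated}. The essential observation is that all of their entries lie in $\{0,\pm1\}$, so the very same matrices define elements of $SKS_4(k)$ over \emph{any} field $k$. Moreover the relations $X_1^2=X_2^2=X_3^2=-I$ and $X_iX_j=-X_jX_i$ (for $i\neq j$) are polynomial identities with integer coefficients, and hence persist over $k$. Expanding $(\alpha X_1+\beta X_2+\gamma X_3)^2$ and using that the cross terms cancel by anticommutation gives
\[
(\alpha X_1+\beta X_2+\gamma X_3)^2=-(\alpha^2+\beta^2+\gamma^2)\,I,
\]
so that $\det(\alpha X_1+\beta X_2+\gamma X_3)^2=(\alpha^2+\beta^2+\gamma^2)^4$ for all $\alpha,\beta,\gamma\in k$. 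In particular $\alpha X_1+\beta X_2+\gamma X_3$ is invertible if and only if $\alpha^2+\beta^2+\gamma^2\neq0$ in $k$.

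It then remains to show that the ternary form $\alpha^2+\beta^2+\gamma^2$ is anisotropic over $k$ under the hypothesis. Suppose $\alpha^2+\beta^2+\gamma^2=0$ with $(\alpha,\beta,\gamma)\neq(0,0,0)$. At least two of the coordinates must be non-zero, since a single non-zero coordinate would force its square, and hence itself, to vanish; by the symmetry of the form we may assume $\gamma\neq0$. Dividing by $\gamma^2$ yields $(\alpha/\gamma)^2+(\beta/\gamma)^2=-1$, exhibiting $-1$ as a sum of two squares in $k$, contrary to assumption. Hence the form is anisotropic, every non-trivial combination $\alpha X_1+\beta X_2+\gamma X_3$ is invertible, and $\mathrm{span}\{X_1,X_2,X_3\}$ is a $3$-dimensional rank-$4$ subspace of $SKS_4(k)$. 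Applying Theorem~\ref{structure_constant_Camina_algebra} produces the desired $4$-generated $2$-step nilpotent Camina Lie algebra of breadth type $(0,3)$.

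I do not expect a genuine obstacle here, since this statement is really the field-theoretic abstraction of Example~\ref{real_Camina_four_generated}, whose construction over $\mathbb{R}$ is the prototypical instance. The only two points needing care are the field-independence of the matrix identities (guaranteed by their integral form) and the elementary equivalence between anisotropy of $\langle1,1,1\rangle$ and the statement that $-1$ is not a sum of two squares. I would also note that the hypothesis automatically excludes characteristic $2$, where $-1=1=1^2+0^2$ is a sum of two squares; this is reassuring, as the skew-symmetric formalism and the identity $X_i^2=-I$ behave well precisely when $\mathrm{char}(k)\neq2$.
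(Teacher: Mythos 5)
Your proof is correct, and it reaches the result by a different mechanism than the paper, although both arguments produce the very same Lie algebra: the ideal $I=\mathrm{span}\{[x_1,x_2]+[x_3,x_4],\ [x_1,x_3]-[x_2,x_4],\ [x_1,x_4]+[x_2,x_3]\}$ that the paper quotients by is exactly the relation space cut out by your matrices $X_1,X_2,X_3$. The paper works ideal-theoretically: by Proposition~\ref{quotient_breadth_type_(0,m)} it suffices to show that $I$ contains no non-zero Lie bracket, so it assumes $[x,y]\in I\setminus\{0\}$ with $x=x_1+\alpha_3x_3+\alpha_4x_4$ and $y=x_2+\beta_3x_3+\beta_4x_4$, and comparing coefficients forces $\beta_3=\alpha_4$, $\beta_4=-\alpha_3$ and $-(\beta_3^2+\beta_4^2)=1$, contradicting the hypothesis; the Camina property then comes from Lemma~\ref{lemma12}, since $\dim(L')=3$ and the breadth type is $(0,3)$. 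You instead go through Theorem~\ref{structure_constant_Camina_algebra}, transporting the quaternion-type matrices of Example~\ref{real_Camina_four_generated} to arbitrary $k$ and reducing invertibility of every non-trivial combination to anisotropy of $\alpha^2+\beta^2+\gamma^2$, which you correctly show is equivalent to $-1$ not being a sum of two squares. Both routes bottom out in the same arithmetic fact, but yours makes the field-independence transparent (integral matrices, identities verified over $\mathbb{Z}$), yields the Camina property by construction rather than via Lemma~\ref{lemma12}, and shows in passing that the hypothesis on $k$ is precisely what this particular construction requires; the paper's route needs no matrix algebra and stays inside the ideal-theoretic machinery of Section~\ref{proof_of_the_main_theorem} used for the main classification. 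One point you should make explicit: you assert the anticommutation relations $X_iX_j=-X_jX_i$ for $i\neq j$ without checking them, and Example~\ref{real_Camina_four_generated} records only $X_i^2=-I$, which alone does not give your displayed identity for $(\alpha X_1+\beta X_2+\gamma X_3)^2$; the relations do hold (indeed $X_1X_2=X_3=-X_2X_1$, $X_2X_3=X_1=-X_3X_2$, $X_3X_1=X_2=-X_1X_3$), so this is a two-line verification, but it belongs in the proof.
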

\begin{proof}
We will show that $\mathcal L_3/I$ where $I$ is the three dimensional central ideal given by $I=\textrm{span}\{ [x_1,x_2]+[x_3,x_4], \; [x_1,x_3]-[x_2,x_4], \; [x_1,x_4] + [x_2,x_3] \} $ is of breadth type $(0, 3)$ over $k$. From Proposition~\ref{quotient_breadth_type_(0,m)}, it is enough to prove that $I$ does not contain a non-zero Lie bracket. Suppose, on the contrary $I$ contains a non-zero Lie bracket, say $[x, y]$ for $x, y\in \mathcal{L}_3$. Without loss of generality, we can take $x = x_1+ \alpha_3 x_3 + \alpha_4x_4$ and $y = x_2+ \beta_3 x_3 + \beta_4x_4$. Then, 
$$[x, y] = [x_1, x_2] + \beta_3[x_1,x_3] + \beta_4[x_1, x_4] - \alpha_3[x_2, x_3] -\alpha_4 [x_2, x_4] + (\alpha_3\beta_4-\beta_3\alpha_4) [x_3, x_4].$$
This gives $\beta_3 = \alpha_4$ and $\beta_4 = -\alpha_3$. This yields $-(\beta_3^2 + \beta_4^2) = 1$. Which contradicts our assumption that $-1$ is not a sum of two squares in $k$. This completes the proof.
\end{proof}
\noindent Thus, in the Theorem~\ref{four_generated_nilpotent_Lie_algebra_(0,3)_real}, we note that the nilpotent real Camina Lie algebras generated by $4$ elements and of breadth type $(0,3)$ are obtained as a quotient of $\mathcal L_3$ by a $3$ dimensional central ideal. Example~\ref{real_Camina_four_generated} shows that there exists a $3$ dimensional central ideal $I$ of $L_3$ such that $L_3/I$ is of breadth type $(0, 3)$. This ideal $I$ is given as follows:
	$$I=\textrm{span}\{ [x_1,x_2]+[x_3,x_4], \; [x_1,x_3]-[x_2,x_4], \; [x_1,x_4] + [x_2,x_3] \} $$
where $L_3$ is generated by $x_1, x_2, x_3, x_4$.

\end{document}